\newtheorem{thm}{Theorem}[section]
\newtheorem{prop}[thm]{Proposition}
\newtheorem{cor}[thm]{Corollary}
\theoremstyle{definition} %makes theorem-like environments
\newtheorem{defn}[thm]{Definition}
\newtheorem{exa}[thm]{Example}
\newcommand{\m}{\medskip}
\newcommand{\e}{\varepsilon}
\newcommand{\Om}{\Omega}
\newcommand{\om}{\omega}
\newcommand{\R}{\mathbb{R}}
\newcommand{\Z}{\mathbb{Z}}
\newcommand{\T}{\mathbb{T}}
\newcommand{\D}{\mathcal{D}}
\newcommand{\F}{\mathcal{F}}
\newcommand{\Dd}{\D^\delta}
\newcommand{\bmo}{{\rm BMO}}
\newcommand{\vmo}{{\rm VMO}}
\newcommand{\intav}{-\!\!\!\!\!\!\int}
\newcommand{\intq}{\frac{1}{|Q|} \, \int_Q \,}
\DeclareMathOperator{\supp}{supp}
\DeclareMathOperator*{\essinf}{ess\ inf}
\DeclareMathOperator*{\esssup}{ess\ sup}
\def\XXint#1#2#3{{\setbox0=\hbox{$#1{#2#3}{\int}$}
     \vcenter{\hbox{$#2#3$}}\kern-.5\wd0}}
\begin{document}

\title{One-parameter and multiparameter function classes\\
    are intersections of finitely many dyadic
    classes.\footnote{2010 \emph{Mathematics Subject
    Classification}. Primary: 42B35; Secondary: 42B30, 42B25.}}

% Changes style of footnote labels from number 1 to symbol *, etc.
\renewcommand{\thefootnote}{\fnsymbol{footnote}}

\author{Ji Li\footnote{Research supported by a University of
South Australia postdoctoral fellowship, and by the NNSF of China,
Grant No.~11001275.}\\
Department of Mathematics\\
Sun Yat-Sen University\\
Guangzhou, PRC\\
{\tt liji6@mail.sysu.edu.cn}
\and Jill Pipher\footnote{Research
supported by the NSF under grant
    number DMS0901139.}\\
Department of Mathematics\\
Brown University\\
Providence, RI 02912, USA\\
{\tt jpipher@math.brown.edu}
\and Lesley A.~Ward\\
School of Mathematics and Statistics\\
University of South Australia\\
Mawson Lakes SA 5095, Australia\\
{\tt lesley.ward@unisa.edu.au}}

% changes style of footnote labels back to numbers.
\renewcommand{\thefootnote}{\arabic{footnote}}

%\subjclass{42B35}

\maketitle

\date
% \tableofcontents

\vspace{-.3cm}

\begin{abstract}
    We prove that the class of Muckenhoupt $A_p$ weights
    coincides with the intersection of finitely many suitable
    translates of dyadic~$A_p$, in both the one-parameter and
    multiparameter cases, and that the analogous results hold
    for the reverse H\"older class~$RH_p$, for doubling
    measures, and for the space~$\vmo$ of functions of
    vanishing mean oscillation. We extend to the multiparameter
    (product) space~$\bmo$ of functions of bounded mean
    oscillation the corresponding one-parameter $\bmo$ result
    due to T.~Mei, by means of the Carleson-measure
    characterization of multiparameter~$\bmo$. Our results hold
    in both the compact and non-compact cases. In addition, we
    survey several definitions of \vmo\ and prove their
    equivalences, in the continuous, dyadic, one-parameter and
    multiparameter cases. We show that the weighted Hardy space
    $H^1(\om)$ is the sum of finitely many suitable translates
    of dyadic weighted~$H^1(\om)$, and that the weighted
    maximal function is pointwise comparable to the sum of
    finitely many dyadic weighted maximal functions for
    suitable translates of the dyadic grid and for each
    doubling weight~$\om$.
\end{abstract}

%----------------------------------------------------------

\section{Introduction}
\label{sec:introduction}
\setcounter{equation}{0}

Function spaces and function classes are of considerable
interest in harmonic analysis, since (i) the prototypical
problem is to establish the boundedness of a singular integral
operator from one function space to another, (ii) these
operators also act on $L^p$ spaces weighted by functions in the
$A_p$ or $RH_p$ function classes, and (iii) the density of the
measure on the underlying space is commonly assumed to belong
to the class of doubling weights. Dyadic function classes offer
a parallel setting in which calculation is often simpler, since
one can exploit the geometry of the dyadic intervals. For
example, in~\cite{JoNi} the John--Nirenberg inequality is
proved by establishing a related inequality on certain dyadic
cubes arising from a Calder\'on--Zygmund decomposition. In the
current paper we are concerned with a \emph{bridge between
continuous and dyadic function classes}.

We consider the \emph{function spaces} \bmo, \vmo\ and $H^1$
(respectively of functions of bounded mean oscillation,
functions of vanishing mean oscillation, and the Hardy space),
and also the \emph{weight classes} of Muckenhoupt's $A_p$
weights, of reverse H\"older $RH_p$ weights, and of doubling
weights. We use the term \emph{function classes} to include
both function spaces and weight classes. We show that for each
of these function classes, the continuous version can be
written as the intersection of finitely many suitable
translates of the dyadic version, using two translates in the
one-parameter case, and $2^k$ translates in the multiparameter
case. We also show how the norms (or the $A_p$ and $RH_p$
constants) from the continuous and dyadic classes are related.
Our results hold both in the compact case where our functions
are defined on the circle~$\T$ and in the non-compact case of
the real line~$\R$, and for the higher-dimensional
one-parameter analogues $\T^m$ and~$\R^m$. As corollaries, we
extend to the weighted case and to the multiparameter case the
result that the Hardy space $H^1$ is the sum of finitely many
suitable translates of the dyadic version of~$H^1$. Also, we
show that the weighted maximal function, where the weight is
doubling, is pointwise comparable to the sum of finitely many
suitable translates of the dyadic weighted maximal functions.

We note that Mei established these results for the case of
one-parameter~\bmo, for one-parameter unweighted $H^1$, and for
the one-parameter unweighted maximal function~\cite{Mei}.

We mention that there is a second type of bridge between
continuous and dyadic function classes, via averaging, as
developed in the papers~\cite{GJ,W,PW,T,PWX}. Namely, a
suitable family of functions in the dyadic version of a
function class can be converted to a single function that
belongs to the continuous version of the same class, via a
translation-average (for \bmo\ and \vmo) or a
geometric-arithmetic average (for $A_p$, $RH_p$, and doubling
weights). We do not discuss these matters further in the
current paper.

What is a ``suitable translate''? Denote by $\D$ the usual grid
of dyadic intervals in~$\R$. Consider the real numbers $\delta$
that are \emph{far from the dyadic rational numbers}, in the
sense that the distance from $\delta$ to each given dyadic
rational~$k/2^n$ is at least some fixed multiple of $1/2^n$.
That is,
\begin{equation}\label{eqn:badlyapproximable}
    \left|\delta - \frac{k}{2^n}\right|
    \geq \frac{C}{2^n}
    \qquad \text{for all integers $n$ and $k$},
\end{equation}
where $C$ is a positive constant that may depend on $\delta$
but is independent of~$n$ and~$k$. For example, $\delta = 1/3$
is far from the dyadic rationals. The set of all such $\delta$ is
dense in~$\R$ but has measure zero~\cite{Mei}.

For such a $\delta$, let $\Dd$ denote the translate of $\D$ by
$\delta$, modified as follows. Small dyadic intervals are
simply translated by~$\delta$. Large dyadic intervals are
translated not only by $\delta$ but also by an additional
amount which depends on the scale of the interval. See
Section~\ref{sec:dyadicintervals} for the precise construction
of this collection $\Dd$ of translated dyadic intervals and for
a motivating example.

%% Note: Why we don't mention ``badly-approximable'':
%% For every irrational number $\alpha$ there is a sequence of
%% rational numbers $p_n/q_n$ such that $|\alpha - p_n/q_n|
%% \leq c/q_n^2$. Some irrational numbers (a dense set of
%% measure zero) are badly approximable in the sense that
%% there is a $C$ such that for all integers $p$, $q$,
%% $|\alpha - p/q| \geq C/q^2$. But these facts don't line up
%% with the ``far from dyadic rationals' idea above. We won't
%% mention them in the paper.

The key proposition in Mei's paper (\cite[Prop 2.1]{Mei})
states that if~$\delta$ is far from the dyadic rationals, then
for each interval~$Q$ there is an interval~$I$ containing~$Q$,
belonging either to the grid~$\D$ of dyadic intervals or to the
grid~$\Dd$ of translated dyadic intervals, and whose length~$I$
is no more than~$C|Q|$, where~$C$ is a constant independent
of~$Q$.

We observe in Proposition \ref{prop:comparable} below that if
$\delta$ is far from the dyadic rationals and if a weight~$\om$
is both dyadic $\D$-doubling and  dyadic $\Dd$-doubling, then
for each interval~$Q$, the average of~$\om$ on~$|Q|$ is
comparable to the average of~$\om$ on the interval~$I$
guaranteed by \cite[Prop 2.1]{Mei}, with constants independent
of~$Q$. We use this observation in a crucial way in the proofs
of our results for $A_p$, $RH_p$, $H^1(\om)$ and weighted
maximal functions; see below.

We note that Mei's key proposition is a generalization of the
so-called one-third trick. The earliest reference we have found
for this idea is in \cite[p.339]{O}, although it may have been
known earlier.

If a nonnegative locally integrable function $\om$ belongs to
the class of $A_p$ weights, for some $p$ with $1 \leq p \le
\infty$, then \emph{a fortiori} $\om$ belongs to the class
$A_p^d$ of dyadic $A_p$ weights, for which the defining $A_p$
condition is only required to hold for dyadic
intervals~($I\in\D$), not for all possible intervals. Similarly
$\om$ belongs to the class $A_p^{\delta}$, for which the $A_p$
condition is only required to hold for appropriate translates
of the dyadic intervals~($I\in\Dd$).  Thus $A_p \subset
A_p^d\cap A_p^{\delta}$.

We show in this paper that if $\delta$ is far from dyadic
rationals in the sense of
condition~\eqref{eqn:badlyapproximable}, then equality holds:
$A_p = A_p^d \cap A_p^{\delta}$. Moreover, the
constant~$A_p(\om)$ depends only on the constants $A_p^d(\om)$
and~$A_p^\delta(\om)$, and vice versa.

We also establish the corresponding result for reverse H\"older
weights: if $\delta$ satisfies
condition~\eqref{eqn:badlyapproximable} and $1 \leq p \leq
\infty$, then $RH_p = RH_p^d \cap RH_p^{\delta}$, with the
corresponding dependence of the $RH_p$ constants. Further, the
same is true for doubling weights. These results for $A_p$, for
$RH_p$ and for doubling weights are collected in Theorem
\ref{thm:intersectionoftranslatesApRHpdbl} for the
one-parameter case, and Theorem \ref{thm:product
intersectionoftranslatesApRHpdbl} for the multiparameter case
with arbitrarily many factors.

Next we point out that T.~Mei~\cite{Mei} established the
corresponding result for \bmo\ on the circle~$\T$: if $\delta$
is far from dyadic rationals in the sense of
condition~\eqref{eqn:meicondR}, then $\bmo(\T) = \bmo_d(\T)
\cap \bmo_{\delta}(\T)$. He extended this result to
(one-parameter)~$\T^m$, showing that $\bmo(\T^m)$ is the
intersection of $m + 1$ translates of the dyadic version of
$\bmo(\T^m)$, and similarly to (one-parameter)~$\R^m$. He notes
that John Garnett knew earlier that $\bmo$ coincides with the
intersection of three translates of dyadic $\bmo$, building
on~\cite[p.417]{Gar}.

We give an alternative proof of Mei's $\bmo$ result, via the
Carleson-measure characterization of \bmo\ together with Mei's
key proposition, in order to generalize to the case of
multiparameter \bmo; see Theorems
\ref{thm:intersectionoftranslatesBMOR}
and~\ref{thm:intersectionoftranslatesBMORn}.

We also prove the analogous results for one-parameter and
multiparameter \vmo\ (Theorem \ref{thm:intersectionoftranslates
VMO}). In doing so, we survey several definitions of continuous
and dyadic \vmo\ in the one- and multiparameter settings, the
equivalences among these definitions, and the duality $\vmo^* =
H^1$.

As a consequence of the results above, we show that the
weighted Hardy space $H^1(\om)$ is the sum of finitely many
suitable translates of dyadic weighted~$H^1(\om)$, and show
that the weighted maximal function (Hardy--Littlewood maximal
function or strong maximal function, as appropriate) is
pointwise comparable to the sum of finitely many dyadic
weighted maximal functions for suitable translates of the
dyadic grid and for every doubling weight~$\om$ (in particular
for weights $w\in A_p$, $1\leq p\leq \infty$). See Proposition
\ref{prop:H-L function and Hardy space} and Corollary
\ref{cor:weighted H-L function and Hardy space} for the
one-parameter case and Propositions
\ref{prop:sum_of_translates_productH1},
\ref{prop:sum_of_translates_strong maximal} and Corollary
\ref{cor:product weighted H-L function and Hardy space} for the
multiparameter case.

We make some remarks comparing the compact case (the
circle~$\T$) with the non-compact case (the real line~$\R$). We
define the circle to be the unit interval with endpoints
identified: $\T := [0,1]/(0\sim 1)$. Note that non-dyadic
intervals $Q\subset\T$ may wrap around from 1 to~0. First, for
the continuous function space \bmo\ and the continuous function
classes of $A_p$, $RH_p$ and doubling weights, there is only a
small difference between the compact and non-compact cases: the
defining property is assumed to hold only on the intervals
contained in~$\T$ as opposed to on all intervals in~$\R$.
Second, for their dyadic versions ($\bmo_d$, $A_p^d$, $RH_p^d$
and dyadic doubling weights), the same is true, with the
additional difference that when considering translations
by~$\delta$, in the compact case~($\T$) it suffices simply to
translate each dyadic interval by~$\delta$, while in the
non-compact case~$\R$, we translate intervals of length larger
than 1 not only by the amount~$\delta$ but also by an
additional amount that depends on the scale, as mentioned
above.

The differences in the case of \vmo\ are more subtle. First,
for continuous \vmo, in the compact case the definition of the
subspace \vmo\ of \bmo\ involves a condition requiring the
oscillation of the function to approach zero as the length of
the interval goes to zero. In the non-compact case, one must
impose two additional conditions controlling the oscillation
over large intervals and over intervals that are far from the
origin. (With this definition one retains the duality $\vmo^* =
H^1$.) Second, for the dyadic non-compact case the same three
oscillation conditions apply, and also when translating by
$\delta$ we need the additional translations of intervals at
large scales, as described in the preceding paragraph. See
Section~\ref{sec:resultsproductVMO} for a detailed discussion
of \vmo, including references.

For the one-parameter but higher-dimensional cases, where the
functions are defined on $\T^m$ or $\R^m$, $m \geq 2$, instead
of on~$\T$ or~$\R$, the same remarks apply, with intervals
replaced by cubes.

So much for the one-parameter case. The corresponding remarks
apply to the multiparameter case. We give the technical details
in the body of the paper.

To reduce the amount of notation required, in the rest of the
paper we work on~$\R$ and $\R\otimes\R$. However, our results
and proofs go through for $\T$ and $\T\otimes\T$, and for
$\R^m$, $\R^{m_1}\otimes\R^{m_2}$, $\T^m$, and
$\T^{m_1}\otimes\T^{m_2}$, and also for arbitrarily many
factors in the multiparameter setting ($\R^{m_1}\otimes\cdots
\otimes\R^{m_k}$ or $\T^{m_1}\otimes\cdots \otimes\T^{m_k}$).

The paper is organized as follows. In Section
\ref{sec:dyadicintervals} we give the required background on
grids of dyadic intervals, their translates,  real numbers
$\delta$ that are far from dyadic rationals, and the key
proposition in Mei's paper. In
Section~\ref{sec:resultsApRHpdbl} we define the classes of
$A_p$ weights, $RH_p$ weights and doubling weights, including
the extreme cases $A_1$, $A_\infty$, $RH_1$ and $RH_\infty$. We
prove our results for all these classes in both the
one-parameter and multiparameter cases. In
Section~\ref{sec:resultsBMO} we re-prove Mei's \bmo\ result,
and extend it to the multiparameter case. In
Section~\ref{sec:resultsproductVMO}, we discuss the definitions
of \vmo\ and prove the \vmo\ results. In
Section~\ref{sec:hardymaximal}, we prove our results for the
Hardy space~$H^1$ in both the one- and multiparameter cases,
and for the Hardy--Littlewood maximal function and the strong
maximal function, and we establish the weighted versions of
these results.

%----------------------------------------------------------

\section{Dyadic intervals and their translates}
\label{sec:dyadicintervals}
\setcounter{equation}{0}

Let $\D = \D(\R)$ denote the grid of dyadic intervals on $\R$:
$$
    \D(\R)=\bigcup_{n\in\mathbb{Z}} \D_n(\R),
$$
where for each $n\in\mathbb{Z}$,
$$
    \D_n(\R)
    = \Big\{ \Big[{k\over 2^n}, {k+1\over 2^n}\Big)\ \Big|\ k\in\Z \Big\}.
$$

On the circle $\T=[0,1]\slash(0,~1)$, the definition of $\D(\T)$ is
the same except that the integer $n$ runs only from 0 to $\infty$,
and $k\in \{0,1,\ldots,2^n-1\}$.

For $\delta\in\R$, we denote by $\D^\delta=\D^\delta(\T)$ the
translate to the right by $\delta$ of the dyadic grid on the circle
$\T$, considered modulo 1. Thus
$$
    \Dd(\T) := \{I + \delta \big|\ I\in\D(\T)\}
    = \Big\{ \Big[\big({k\over 2^n}+\delta\big)\ \text{mod}\ 1,
        \big({k+1\over 2^n}+\delta\big)\ \text{mod}\ 1\Big)\ \Big|\ k\in\Z \Big\}.
$$

Finally, on the real line $\R$, following Mei, we include
additional translates in the definition of the large-scale
$\delta$-dyadic intervals in $\D^\delta(\R)$. Specifically,
$$
   \D^\delta(\R)=\bigcup_{n\in\Z} D^\delta_n(\R),
$$
where for $n\geq0$,
$$
    D^\delta_n(\R)=\{I+\delta \mid I\in \D_n(\R)\},
$$
while for $n<0$ and $n$ even,
$$
    D^\delta_n(\R)
    =\Big\{\Big[{k\over 2^n}+\delta+\sum_{j=(n/2)+1}^0 2^{-2j},\ {k+1\over 2^n}
    +\delta+\sum_{j=(n/2)+1}^0 2^{-2j}\Big)\ \Big|\ k\in
    \Z\Big\}.
$$
These choices together with the nested property completely
determine the collections $\D^\delta_n(\R)$ for $n<0$, $n$ odd.
Note that we have translated by $\sum_{j=(n/2)+1}^0 2^{-2j}$,
rather than by Mei's translation $\sum_{j=n+2}^0 2^{-j}$.

For example, for $n=-2$, the interval $[0,4)$ of length $2^2$
belongs to $\D_{-2}(\R)$ while its translate $[\delta+1,\delta+5)$
belongs to $\D_{-2}^\delta(\R)$, since $\sum_{j=0}^02^{-2j}=1$.
Similarly, for $n=-4$, the interval $[0,16)$ of length $2^4$ belongs
to $\D_{-4}(\R)$ while its translate $[\delta+5,\delta+21)$ belongs
to $\D_{-4}^\delta(\R)$, since $\sum_{j=-1}^02^{-2j}=5$.

\begin{defn}\label{def: delta}
A real number $\delta$ is \emph{far from the dyadic rational
numbers} if the distance from $\delta$ to each given dyadic
rational~$k/2^n$ is at least some fixed multiple of $1/2^n$. That
is, if
\begin{equation*}
    \Big|\delta - \frac{k}{2^n}\Big|
    \geq \frac{C}{2^n}
    \qquad \text{for all integers $n$ and $k$},
\end{equation*}
where $C$ is a positive constant that may depend on $\delta$
but is independent of~$n$ and~$k$. Equivalently, the
\emph{relative distance} $d(\delta)$ from $\delta$ to the set
of dyadic rational numbers is positive:
\begin{equation}\label{eqn:meicondR}
    d(\delta)
    := \inf \Big\{2^n \Big|\delta -\frac{k}{2^n}\Big|
        \mid n \in \Z, k\in\Z\Big\}
    > 0.
\end{equation}
\end{defn}

For example, $\delta = 1/3$ is far from the dyadic rationals
since $d(1/3) = 1/3 > 0$. The set of all such $\delta$ is dense
in~$\R$ but has measure zero. Note that $d(\delta+1)=d(\delta)$
for all $\delta\in\R$.

The key tool in Mei's paper is the following proposition.

\begin{prop}[Prop 2.1\ \cite{Mei}]\label{prop:Mei}
    Suppose $\delta\in (0,1)$ is far from dyadic
    rationals, in the sense of condition~\eqref{eqn:meicondR}.
    Then there is a constant $C(\delta)$ such that for each
    interval~$Q$ in~$\R$, there is an interval~$I$ in~$\R$ such
    that
    \begin{enumerate}
        \item[\textup{(i)}] $Q\subset I$,

        \item[\textup{(ii)}] $|I| \leq C(\delta) |Q|$,
            and

        \item[\textup{(iii)}] $I\in\D$ or $I\in\Dd$.
    \end{enumerate}
    The constant~$C(\delta)$ can be taken to be $C(\delta) =
    2/d(\delta)$.
\end{prop}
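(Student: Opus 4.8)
The plan is to prove Proposition~\ref{prop:Mei} by an explicit construction: given $Q$, we choose the scale $n$ so that $2^{-n}$ is comparable to $|Q|$ (specifically, the smallest dyadic scale still at least as large as $|Q|$), and we argue that at that scale, either $Q$ is contained in a single interval of $\D_n$ or a single interval of $\D^\delta_n$. The obstruction to $Q$ fitting inside one interval of $\D_n$ is that $Q$ straddles an endpoint $k/2^n$ of the grid; the obstruction to fitting inside one interval of $\D^\delta_n$ is that $Q$ straddles an endpoint of the translated grid. The key point is that, because $\delta$ is far from the dyadic rationals, these two grids are ``uniformly transverse'': a point cannot be too close to an endpoint of both grids at the same scale, so at least one of the two grids works.

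First I would reduce to the case $0 \le n$, i.e., $|Q| \le 1$, or handle the scales $n < 0$ separately, since the translates in $\D^\delta_n(\R)$ for $n<0$ differ from a pure shift by $\delta$; by the nesting and the periodicity remark $d(\delta+1)=d(\delta)$, the large-scale case reduces to the small-scale analysis after absorbing the extra shift into a new ``effective $\delta$'' with the same relative distance. Next, for the main (small-scale) case, fix $n$ with $2^{-n-1} < |Q| \le 2^{-n}$, so $|Q|$ and $2^{-n}$ are comparable up to a factor of $2$. Write $Q = [a, a+|Q|)$ (or its closure; endpoints are irrelevant). If $Q \subset I$ for some $I = [k/2^n, (k+1)/2^n) \in \D_n$ we are done with $|I| = 2^{-n} < 2|Q|$. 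Otherwise $Q$ contains a dyadic point $k/2^n$ in its interior, so $\operatorname{dist}(a, 2^{-n}\Z) < |Q| \le 2^{-n}$; concretely the distance from $a$ to the nearest point of $2^{-n}\Z$ is less than $|Q|$.

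Then I would show $Q \subset I'$ for some $I' \in \D^\delta_n$. The endpoints of $\D^\delta_n$ are the points $k/2^n + \delta$ (in the small-scale case $n \ge 0$). It suffices to check that $Q$ contains no such point in its interior, i.e., that $\operatorname{dist}(a, 2^{-n}\Z + \delta) \ge |Q|$ — wait, more carefully, that the interval $[a, a+|Q|)$ meets $2^{-n}\Z + \delta$ in at most its left endpoint; since $|Q| \le 2^{-n}$ it can contain at most one point of that lattice, so we need the nearest point of $2^{-n}\Z + \delta$ to $a$ to be at distance $\ge |Q|$ (or to coincide with $a$). Now $\operatorname{dist}(a, 2^{-n}\Z + \delta) = \operatorname{dist}(a - \delta, 2^{-n}\Z)$, and the hypothesis \eqref{eqn:meicondR} gives $|\delta - j/2^n - (\text{nearest point})| \ge d(\delta)/2^n$ for a suitable reindexing; combined with $\operatorname{dist}(a, 2^{-n}\Z) < |Q|$, the triangle inequality forces $\operatorname{dist}(a-\delta, 2^{-n}\Z) \ge d(\delta)/2^n - |Q|$. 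This is $\ge |Q|$ provided $|Q| \le d(\delta)/2^{n+1}$, which we can arrange by choosing $n$ slightly larger (taking $2^{-n}$ comparable to $|Q|/d(\delta)$ rather than to $|Q|$), yielding the bound $|I'| = 2^{-n} \le C(\delta)|Q|$ with $C(\delta) = 2/d(\delta)$ as claimed.

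The main obstacle is the bookkeeping in the large-scale case $n < 0$, where the intervals of $\D^\delta_n(\R)$ are shifted by $\delta + \sum_{j=(n/2)+1}^0 2^{-2j}$ rather than by $\delta$ alone, and the grids $\D_n$ and $\D^\delta_n$ are not simply translates of each other across scales; one must verify that the cumulative shift still lands ``far from dyadic rationals'' at every relevant scale, using that the extra term is an integer at the even scales and exploiting the nesting property to pin down the odd scales, together with $d(\delta + m) = d(\delta)$ for integers $m$. Once transversality of the two grids at each scale is established uniformly, the rest is the elementary triangle-inequality argument above.
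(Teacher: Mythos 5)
Your argument is essentially the paper's: both choose the scale $2^{-n}$ with $d(\delta)2^{-n-1}\le |Q| < d(\delta)2^{-n}$ and exploit the fact that the endpoint lattices of $\D_n$ and $\D^\delta_n$ are mutually $d(\delta)2^{-n}$-separated, so that $Q$ can straddle an endpoint of at most one of the two grids; your handling of the large scales (absorbing the integer shift $\sum 2^{-2j}$ into $\delta$ and using $d(\delta+m)=d(\delta)$) is actually more explicit than the paper's, which simply asserts the separation for all $n$. The only slippage is quantitative: measuring distances from the left endpoint $a$ costs you a factor of $2$ (yielding $4/d(\delta)$); to recover $C(\delta)=2/d(\delta)$, argue as the paper does that if $Q$ straddled an endpoint of each grid, those two endpoints would both lie in $Q$ and hence be less than $|Q|<d(\delta)2^{-n}$ apart, contradicting the separation.
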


Mei's proposition is stated on the circle~$\T$ identified with
$(0,2\pi]$, with condition~\eqref{eqn:meicondR} replaced by
    \begin{equation}\label{eqn:meicondT}
        d(\delta)
        := \inf \{2^n \left|\delta - k 2^{-n}\right|
            \mid n \geq 0, k\in\Z\}
        > 0,
    \end{equation}
with $0 < \delta < 1$ and with the filtrations $\D(\T)$ and
$\D^\delta(\T)$. For completeness, we give a proof, stated on
the real line and following Mei's proof.

\begin{proof}[Proof of Proposition~\ref{prop:Mei}]
Fix an interval~$Q$ in~$\R$. There exists an integer $n$ such
that $d(\delta)2^{-n-1}\leq |Q|< d(\delta)2^{-n}$. Now we set
$A_n=\{ k\cdot 2^n \mid k\in\Z \}$ for every fixed $n$ and
\begin{enumerate}
    \item[(1)]$A_n^\delta=\{ \delta+ k\cdot 2^n \mid k\in\Z
        \}$ for $n\geq 0$,

    \item[(2)]$A_n^\delta=\{ \delta+
        \sum_{j=(n/2)+1}^{0}2^{-2j}+ k\cdot 2^n \mid k\in\Z
        \}$ for $n<0$, $n$ even, and

    \item[(3)]$A_n^\delta=\{ \delta+
        \sum_{j=(n-1)/2+1}^{0}2^{-2j}+ k\cdot 2^n \mid
        k\in\Z \}$ for $n<0$, $n$ odd.
\end{enumerate}
Note that for any two different points $a,b\in A_n\cup
A_n^\delta$, we have $|a-b|\geq d(\delta)2^{-n}>|Q|$. Thus,
there is at most one element of $A_n\cup A_n^\delta$ belongs to
$Q$. Hence, $Q\cap A_n=\emptyset $ or $Q\cap
A_n^\delta=\emptyset $. Therefore, $I$ must be contained in
some dyadic interval $I\in \D$ or $I\in \D^\delta$ and
$|I|=2^{-n}\leq (2/d(\delta)) |Q|$.
\end{proof}

The corresponding result holds for intervals~$I'$ contained in~$Q$.

\begin{prop}
    Suppose $\delta\in\R$ is far from dyadic
    rationals, in the sense of condition~\eqref{eqn:meicondR}.
    Then there is a constant $C'(\delta)$ such that for each
    interval~$Q$ in~$\R$, there is an interval~$I'$ in~$\R$
    such that
    \begin{enumerate}
        \item[\textup{(i)}] $Q\supset I'$,

        \item[\textup{(ii)}] $|I'| \geq C'(\delta) |Q|$,
            and

        \item[\textup{(iii)}] $I'\in\D$ or $I'\in\Dd$.
    \end{enumerate}
\end{prop}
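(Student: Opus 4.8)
The plan is to prove the statement directly, using only the untranslated dyadic grid~$\D$: in contrast to Proposition~\ref{prop:Mei}, for this (easier) direction neither the translated grid~$\D^\delta$ nor the hypothesis that $\delta$ is far from the dyadic rationals is actually needed, and one may take $C'(\delta)=1/4$. The reason the containment direction is simpler is that there is no obstruction analogous to an interval~$Q$ straddling an endpoint of every coarse dyadic scale; to fit a dyadic interval \emph{inside}~$Q$ one merely passes to a fine enough scale, at which the $\D_n$-intervals are so short that one of them must fit.

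First I would fix an interval~$Q$ and choose the integer~$n$ with
$$
    \tfrac14\,|Q| \;<\; 2^{-n} \;\le\; \tfrac12\,|Q|.
$$
Such an~$n$ exists and is unique, since the ratio of the endpoints of the half-open interval $\big(\tfrac14|Q|,\tfrac12|Q|\big]$ equals~$2$, so it contains exactly one integer power of~$2$. On the circle, where $|Q|\le 1$, this choice forces $n\ge 1\ge 0$, so $\D_n(\T)$ is a genuine dyadic scale. In either case $|Q|\ge 2\cdot 2^{-n}$.

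Next I would verify the elementary fact that any interval of length at least $2\cdot 2^{-n}$ contains a member of~$\D_n$: if $a$ denotes the left endpoint of~$Q$ and $k$ is the least integer with $k/2^n\ge a$ (or with $k/2^n>a$, when $Q$ is open on the left), then $a\le k/2^n<a+2^{-n}$, hence $(k+1)/2^n<a+2\cdot 2^{-n}\le a+|Q|$, so that $I':=[k/2^n,(k+1)/2^n)\in\D_n$ is contained in~$Q$. On~$\T$ one first replaces~$Q$ by a subarc of length exactly $2\cdot 2^{-n}\le 1$, to avoid wrap-around, and then applies the same computation. Since $|I'|=2^{-n}>\tfrac14|Q|$, the interval~$I'$ satisfies (i) and (ii) with $C'(\delta)=1/4$, and (iii) with $I'\in\D$.

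The step needing the most attention — and it is only bookkeeping, not a genuine obstacle — is handling the circle uniformly with the line, namely checking that the selected scale~$n$ is nonnegative and that a wrapping interval still contains a full dyadic arc; this is why I would route the argument through an auxiliary subinterval of~$Q$ of length exactly $2\cdot 2^{-n}$. For completeness I would close with a remark that the same computation applied to the shifted point sets~$A_n^\delta$ of the proof of Proposition~\ref{prop:Mei} produces, alternatively, an interval $I'\in\D^\delta$ with the same properties, so that the symmetric ``$I'\in\D$ or $I'\in\D^\delta$'' form stated above is available, even though it is not required.
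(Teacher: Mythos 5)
Your proof is correct and follows essentially the same route as the paper's (two-line) argument: select the dyadic scale at which intervals have length at most half of $|Q|$, so that one of them must sit inside $Q$, yielding $C'(\delta)=1/4$. Your additional observations --- that only the grid $\D$ is needed, that the hypothesis on $\delta$ plays no role here, and the explicit endpoint bookkeeping --- are all accurate refinements of what the paper leaves implicit.
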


\begin{proof}
    For every interval $Q$ in $\R$, take the integer~$n$ such that
    $2^{-n}\leq|Q|<2^{-n+1}$. Then, there exists an interval $I$ in
    $\D_{-n-1}$ or $\D_{-n-1}^\delta$ such that $I\subset Q$. Hence
    the Proposition holds.
\end{proof}

\begin{exa}\label{exa:whyshift}
Here is an example that illustrates the difference between $\R$ and
$\T$ with regard to translations, and the need in the definition
above of the $\delta$-dyadic intervals in $\D^\delta(\R)$ for the global translations
at certain scales. On the real line~$\R$, take the usual
collection~$\D$ of dyadic intervals, take any positive $\delta$ and
let $\D^{\delta,\#}$ denote the translation to the right by $\delta$
of the dyadic grid~$\D$, so that $I^\#\in\D^{\delta,\#}$ if and only
if $I^\# = I + \delta$ for some $I\in\D$. Let $Q$ be an interval
containing both 0 and~$\delta$ in its interior. Then there is no
interval $I$ in $\D$ or $\D^{\delta,\#}$ satisfying property~(i),
namely $Q\subset I$, of Proposition~\ref{prop:Mei}, since dyadic
intervals do not have 0 as an interior point and intervals
in~$\D^{\delta,\#}$ do not have $\delta$ as an interior point.

On the circle~$\T$ viewed as $[0,1]$ with the endpoints identified,
however, the situation is different. Take $\delta
> 0$ such that $d(\delta) > 0$. It follows from the definition
of $\delta$, by taking $k = n = 0$, that $d(\delta) \leq \delta$.
Let $Q$ be an interval in~$\T$ containing 0 and $\delta$ as interior
points. Then $|Q| \geq \delta$. It follows that the properties
asserted in Proposition~\ref{prop:Mei} hold for the choice $I = \T$,
since $Q\subset\T$, $\T\in\D$, and $|\T| = 1 \leq |Q|/\delta$ so
that
\[
    \frac{|\T|}{|I_{t,y}|}
    \leq \frac{1}{\delta}
    \leq \frac{1}{d(\delta)}
    \leq \frac{2}{d(\delta)}
    = C(\delta).
\]

%It is in order to deal with intervals~$Q\subset\R$ of the form above that
%Mei, when extending his results from $\T$ and $\T^m$ to $\R$ and
%$\R^m$~\cite[Remark~7]{Mei}, uses translates of the usual dyadic
%intervals at each scale $2^{-n}$ where $n < 0$ is even.

Mei's use of the \emph{translates} of the usual dyadic
intervals at scale $2^{-n}$ for all even $n<0$ \cite[Remark
7]{Mei} ensures that the conclusion of Proposition
\ref{prop:comparable} does hold for the intervals $Q\subset\R$
in this example.
\end{exa}

For a function~$\om$ that is both dyadic $\D$-doubling and dyadic
$\D^\delta$-doubling, where $\delta$ is far from dyadic rationals,
the average value of $\om$ over an interval~$Q$ is comparable to the
average value of $\om$ over the interval~$I$ guaranteed by
Proposition~\ref{prop:Mei}. See Proposition \ref{prop:comparable} below.

\section{Intersections of dyadic classes of weights}
\label{sec:resultsApRHpdbl}
\setcounter{equation}{0}

\subsection{One-parameter results for $A_p$ and $RH_p$}
%\label{sec:resultsApRHpdbl}

We prove that $A_p$ is the intersection of two suitable
translates of dyadic $A_p$ for $1\leq p\leq \infty$, and that
the corresponding results hold for $RH_p$, $1\leq p\leq \infty$
and for doubling weights.

As usual, by \emph{doubling weight} we mean a nonnegative
locally integrable function $\om$ on $\R$ such that
$\om(\widetilde{Q})\leq C\om(Q)$ with a positive constant $C$
independent of $Q$, where the double $\widetilde{Q}$ of $Q$ is
the interval with the same midpoint as $Q$ and twice the length
of $Q$. Similarly, a \emph{dyadic doubling weight} satisfies
the corresponding condition $\om(\widetilde{I})\leq C\om(I)$,
where $\widetilde{I}$ is the dyadic parent of $I$.

The $A_p$ weights were identified by Muckenhoupt~\cite{M} as
the weights~$\om$ for which the Hardy--Littlewood maximal
function is bounded from~$L^p(d\mu)$ to itself, where $d\mu =
\om(x) \, dx$. Here we define the classes~$A_p$ and $RH_p$ in
the one-parameter setting. We delay the corresponding
definitions, statements and proofs for the multiparameter
setting until Subsection~\ref{subsec:productresultsApRHpdbl}.

We use the notation $\intav_Ef := {1\over |E|}\int_Ef$.
\begin{defn}%[$A_p$ weight]
  \label{def:Ap}
  Let $\omega(x)$ be a nonnegative locally integrable function
  on~$\R$. For real~$p$ with $1 < p < \infty$, we
  say $\omega$ is an $A_p$ \emph{weight}, written $\omega\in
  A_p$, if
  \[
    A_p(\om)
    := \sup_Q \left(\intav_Q \omega\right)
    \left(\intav_Q
      \left(\dfrac{1}{\omega}\right)^{1/(p-1)}\right)^{p-1}
    < \infty.
  \]
  For $p = 1$, we say $\omega$ is an $A_1$ \emph{weight},
  written $\omega\in A_1$, if
  \[
    A_1(\om)
    := \sup_Q \left(\intav_Q \omega\right)
    \left(\dfrac{1}{\essinf_{x\in Q} \omega (x)}\right)
    < \infty.
  \]
  For $p = \infty$, we say $\omega$ is an \emph{$A$-infinity
  weight}, written $\omega\in A_\infty$, if
  \[
    A_\infty(\om)
    := \sup_Q \left(\intav_Q \omega\right)
    \exp\left(\intav_Q \log \left(\frac{1}{\omega}\right) \right)
    < \infty.
  \]
  Here the suprema are taken over all intervals~$Q\subset\R$.
  The quantity $A_p(\om)$ is called the \emph{$A_p$~constant
  of~$\om$}.

  The \emph{dyadic $A_p$ classes} $A_p^d$ for $1 \le p \le
  \infty$ are defined analogously, with the
  suprema~$A_p^d(\om)$ being taken over only the dyadic
  intervals~$I\subset\R$.
\end{defn}

\begin{defn}%[$RH_p$ weight]
  \label{def:RHp}
  Let $\omega(x)$ be a nonnegative locally integrable function
  on~$\R$. For real~$p$ with $1 < p < \infty$, we
  say $\omega$ is a \emph{reverse-H\"older-$p$ weight}, written
  $\omega\in RH_p$ or $\om\in B_p$, if
  \[
    RH_p(\om)
    := \sup_Q \left(\intav_Q \omega^p\right)^{1/p}
    \left(\intav_Q \omega\right)^{-1}
    < \infty.
  \]
  For $p = 1$,
  we say $\omega$ is a
  \emph{reverse-H\"older-1 weight}, written $\omega\in
  RH_1$ or $\om\in B_1$, if
  \[
    RH_1(\om)
    := \sup_Q \intav_Q\left(\frac{\om}{\intav_Q\om} \log\frac{\om}{\intav_Q\om}\right)
    < \infty.
  \]
  For $p = \infty$, we say $\omega$ is a
  \emph{reverse-H\"older-infinity weight}, written $\omega\in
  RH_\infty$ or $\om\in B_\infty$, if
  \[
    RH_\infty(\om)
    := \sup_Q \left(\esssup_{x\in Q} \om\right)
    \left(\intav_Q \omega\right)^{-1}
    < \infty.
  \]
  Here the suprema are taken over all intervals~$Q\subset\R$.
  The quantity $RH_p(\om)$ is called the \emph{$RH_p$~constant}
  of~$\om$.

  For $1 \leq p \leq \infty$, we say $\omega$ is a \emph{dyadic
  reverse-H\"older-$p$ weight}, written $\om\in RH_p^d$ or $\om
  \in B_p^d$, if
  \begin{enumerate}
  \item[(i)] the analogous condition $RH_p^d(\om)<\infty$
      %$\sup_{I\in\D}
      %\left(\intav_I \omega^p\right)^{1/p} \left(\intav_I
      %\omega\right)^{-1} < \infty$ or $\sup_{I\in\D}
      %\left(\esssup_{x\in I} \om\right) \left(\intav_I
      %\omega\right)^{-1} < \infty$
      holds with the supremum
      being taken over only the dyadic intervals
      $I\subset\R$, and

  \item[(ii)] in addition $\om$ is a dyadic doubling
      weight.
  \end{enumerate}
  We define the \emph{$RH_p^d$~constant} $RH_p^d(\om)$~of~$\om$
  to be the larger of this dyadic supremum and the dyadic
  doubling constant.
\end{defn}

The $A_p$ inequality (or the $RH_p$ inequality) implies that
the weight~$\om$ is doubling, and the dyadic $A_p$ inequality
implies that~$\om$ is dyadic doubling. However, the dyadic
$RH_p$ inequality does not imply that $\om$ is dyadic doubling,
which is why the dyadic doubling assumption is needed in the
definition of~$RH_p^d$.

We define the $\delta$-dyadic classes $A_p^\delta$ and
$RH_p^\delta$ similarly, using the collection~$\Dd$ of
intervals defined at the start of
Section~\ref{sec:dyadicintervals}.

It is shown in~\cite{BR} that a weight $\om$ belongs to $A_\infty$
if and only if $\om$ belongs to $RH_1$. Thus, $RH_1=A_\infty$ as
sets. Moreover, the constants are related by
\[
    \frac{1}{e} \, RH_1(\om)
    \leq A_\infty(\om)
    \leq C \, \frac{e^{e^{RH_1(\om)}}}{e^{RH_1(\om)}},
\]
where $C$ is independent of $RH_1(\om)$. The constant $1/e$ is
sharp, and the right-hand inequality is sharp in~$RH_1(\om)$.
The same proofs go through for the dyadic case.

We note that $A_\infty$ is the union of the $A_p$ classes, which are
nested and increasing as $p\rightarrow\infty$, and also $RH_1$ is
the union of the $RH_p$ classes, which are nested and decreasing as
$p\rightarrow\infty$. Thus,
$$
  A_\infty = \bigcup_{1\leq p< \infty} A_p = \bigcup_{1< p\leq \infty}
  RH_p =RH_1.
$$
See for example \cite{Gar}, \cite{Gra} or \cite{GCRF} for the theory
and history of $A_p$ weights, $1\leq p\leq \infty$, and $RH_p$
weights for $1<p<\infty$. The class $RH_\infty$ was defined in
\cite{CN}, and the class $RH_1$ was defined in \cite{BR} and, via an
equivalent definition, in \cite{HP}.

\begin{thm}\label{thm:intersectionoftranslatesApRHpdbl}
    Suppose $\delta\in\R$ is far from dyadic
    rationals, in the sense that condition (\ref{eqn:meicondR})
    holds. Then the following assertions hold:
    \begin{enumerate}
        \item[\textup{(a)}] $\om$ is a doubling weight if
            and only if $\om$ is dyadic doubling with
            respect to $\D$ and with respect to $\Dd$.

        \item[\textup{(b)}] $A_p = A_p^d \cap
            A_p^{\delta}$, for each $p$ with $1 \leq p \leq \infty$.

        \item[\textup{(c)}] $RH_p = RH_p^d \cap
            RH_p^{\delta}$, for each $p$ with $1 \leq p \leq \infty$.
    \end{enumerate}
    Bounds for the constants are given in the proof below. In
    particular, $A_p(\om)$ depends only on $A_p^d(\om)$ and
    $A_p^\delta(\om)$, and vice versa, and similarly for the other
    cases.
\end{thm}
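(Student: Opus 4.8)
The plan is to prove all three parts by exploiting Mei's key proposition (Proposition~\ref{prop:Mei}) together with the comparability observation (Proposition~\ref{prop:comparable}), which says that for a weight~$\om$ that is both $\D$-doubling and $\Dd$-doubling, the average of~$\om$ over an arbitrary interval~$Q$ is comparable to its average over the containing interval~$I\in\D\cup\Dd$ with $|I|\leq C(\delta)|Q|$. In each case the inclusion ``$\subset$'' is immediate, since the defining supremum for the continuous class is taken over a larger family of intervals than the dyadic or $\delta$-dyadic suprema; the content is the reverse inclusion and the corresponding bound on the continuous constant in terms of the two dyadic constants.

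For part~(a), suppose $\om$ is both $\D$-doubling and $\Dd$-doubling. Given an arbitrary interval~$Q$, apply Proposition~\ref{prop:Mei} to the double $\widetilde Q$ to get $I\in\D\cup\Dd$ with $\widetilde Q\subset I$ and $|I|\leq C(\delta)|\widetilde Q| = 2C(\delta)|Q|$. Since $I$ has bounded dyadic (or $\delta$-dyadic) generations between it and a child contained in~$Q$ --- the ratio $|I|/|Q|$ is bounded, so only boundedly many halvings are needed --- iterating the dyadic doubling condition shows $\om(I)\leq C'\om(Q')$ for some dyadic (or $\delta$-dyadic) subinterval $Q'\subset Q$, hence $\om(\widetilde Q)\leq\om(I)\leq C'\om(Q)$. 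This gives the continuous doubling constant in terms of $C(\delta)$ and the two dyadic doubling constants.

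For parts~(b) and~(c), first note that membership in $A_p^d\cap A_p^\delta$ (resp.\ $RH_p^d\cap RH_p^\delta$) forces $\om$ to be both $\D$-doubling and $\Dd$-doubling --- for $A_p$ this is automatic from the dyadic $A_p$ inequality, and for $RH_p$ it is built into the definition of $RH_p^d$ and $RH_p^\delta$ --- so Proposition~\ref{prop:comparable} applies. Now fix an arbitrary interval~$Q$ and let $I\in\D\cup\Dd$ be the interval from Proposition~\ref{prop:Mei} with $Q\subset I$ and $|I|\leq C(\delta)|Q|$. For the $A_p$ case with $1<p<\infty$, bound $\bigl(\intav_Q\om\bigr)\bigl(\intav_Q\om^{-1/(p-1)}\bigr)^{p-1}$ by replacing each average over~$Q$ with the corresponding average over~$I$: since $Q\subset I$ and $|I|\leq C(\delta)|Q|$ we have $\intav_Q\om\leq C(\delta)\intav_I\om$ and likewise $\intav_Q\om^{-1/(p-1)}\leq C(\delta)\intav_I\om^{-1/(p-1)}$, so the product is at most $C(\delta)^p$ times the $A_p^d$ or $A_p^\delta$ quantity for~$I$, giving $A_p(\om)\leq C(\delta)^p\max\{A_p^d(\om),A_p^\delta(\om)\}$. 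The endpoint cases $p=1,\infty$ are handled the same way, using that $\essinf_Q\om\geq\essinf_I\om$ (resp.\ the concavity of $\log$ together with $|I|\leq C(\delta)|Q|$ to control $\exp\bigl(\intav_Q\log(1/\om)\bigr)$ against its analogue on~$I$). The $RH_p$ cases are entirely parallel: $\bigl(\intav_Q\om^p\bigr)^{1/p}\leq C(\delta)^{1/p}\bigl(\intav_I\om^p\bigr)^{1/p}$, and the denominator $\intav_Q\om$ is comparable to $\intav_I\om$ by Proposition~\ref{prop:comparable} (this is where doubling, rather than mere containment, is essential --- a lower bound on $\intav_Q\om$ in terms of $\intav_I\om$ does not follow from $Q\subset I$ alone), yielding $RH_p(\om)\leq C(\delta)^{1/p}\cdot(\text{comparability const})\cdot\max\{RH_p^d(\om),RH_p^\delta(\om)\}$; one must also separately verify the continuous doubling of~$\om$, which is exactly part~(a).

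The main obstacle is the $RH_p$ denominator: unlike the $A_p$ product, where both factors only need upper bounds of the form ``average over~$Q$ $\leq C\cdot$ average over~$I$,'' the reverse Hölder quotient requires a \emph{lower} bound $\intav_Q\om\gtrsim\intav_I\om$, which genuinely uses the doubling hypothesis via Proposition~\ref{prop:comparable} and is the reason the dyadic doubling condition was folded into the definition of~$RH_p^d$ and $RH_p^\delta$ in the first place. Everything else is a routine bookkeeping of the constants $C(\delta)$, the exponents, and the doubling constants.
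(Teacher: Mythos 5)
Your proposal follows the paper's proof essentially line for line in the main cases: part (a), part (b) for $1\leq p<\infty$, and part (c) for $1<p\leq\infty$. In each, the easy inclusion is by restriction of the supremum; the doubling direction iterates the dyadic doubling condition down boundedly many generations from the interval $I$ supplied by Proposition~\ref{prop:Mei} to a dyadic descendant inside $Q$; the $A_p$ product is controlled by replacing the $Q$-averages of the two nonnegative integrands $\om$ and $\om^{-1/(p-1)}$ by $I$-averages at a cost of $C(\delta)$ each; and the reverse H\"older quotient uses the two-sided comparability of Proposition~\ref{prop:comparable} for the denominator. Your observation that the $RH_p$ denominator is the one place where dyadic doubling, rather than mere containment, is indispensable is exactly the point the paper makes after Proposition~\ref{prop:comparable}.

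The one genuine gap is the endpoint $p=\infty$ in part (b). You assert the endpoints are ``handled the same way,'' but the replace-$Q$-averages-by-$I$-averages step breaks down for the $A_\infty$ quantity: the integrand $\log(1/\om)$ is signed, so $Q\subset I$ and $|I|\leq C(\delta)|Q|$ do \emph{not} yield $\intav_Q\log(1/\om)\leq C(\delta)\intav_I\log(1/\om)$ or anything of that multiplicative shape. One can extract an \emph{additive} comparison of $\intav_Q\log(1/\om)$ with $\intav_I\log(1/\om)$ by applying Jensen's inequality on $I\setminus Q$ and then re-using the dyadic $A_\infty$ condition on $I$ itself, which is presumably what your appeal to concavity of $\log$ intends; but that is a different computation, it is not actually carried out in your sketch, and it produces a bound for $A_\infty(\om)$ that is polynomial in $\max\{A_\infty^d(\om),A_\infty^\delta(\om)\}$ with a $\delta$-dependent exponent rather than linear. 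The paper sidesteps the issue entirely: it uses $A_\infty=\bigcup_{1\leq p<\infty}A_p$ and its dyadic analogues, chooses $p_1,p_2$ with $\om\in A_{p_1}^d$ and $\om\in A_{p_2}^\delta$, sets $p=\max\{p_1,p_2\}$, and invokes the finite-$p$ case already proved. Relatedly, the case $p=1$ of part (c) is not ``entirely parallel'' either and is absent from your sketch: the $RH_1$ quantity is an entropy expression, and the paper handles it through the identification $RH_1=A_\infty$ (with quantitative control of the constants) rather than by direct estimation over $Q$ and $I$.
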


It is immediate that if $\om$ is doubling then it is dyadic
doubling with respect to both $\D$ and $\Dd$, and similarly if
$\om$ lies in $A_p$ (respectively $RH_p$) then $\om$ lies in
both $A_p^d$ and $A_p^{\delta}$ (respectively $RH_p^d$ and
$RH_p^{\delta}$). In proving the other direction, the key point
is that the average of $\om$ over an interval $Q$ is comparable
to the average of $\om$ over the interval $I\supset Q$
guaranteed by Mei's proposition. More precisely, we have the
following proposition.

\begin{prop}\label{prop:comparable}
    If $\delta$ is far from the dyadic rationals  and
    $\om$ is both dyadic $\D$-doubling and dyadic
    $\Dd$-doubling with constant $C_{\textup{dy}}$, then given
    an interval~$Q$ and given the dyadic interval~$I$
    guaranteed by Proposition~\ref{prop:Mei}, we have
    \begin{equation}\label{eqn:comparable}
       (C_\textup{dy})^{-\log_2 (4C(\delta))} \intav_I \om
        \leq \intav_Q \om
        \leq C(\delta) \intav_I \om.
    \end{equation}
\end{prop}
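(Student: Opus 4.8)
The plan is to exploit the two inclusions $Q \subset I$ and $|I| \le C(\delta)|Q|$ from Proposition~\ref{prop:Mei}. The upper bound is immediate: since $Q \subset I$ and $\om \ge 0$, we have $\int_Q \om \le \int_I \om$, hence $\intav_Q \om = \frac{1}{|Q|}\int_Q \om \le \frac{|I|}{|Q|} \cdot \frac{1}{|I|}\int_I \om \le C(\delta)\intav_I \om$. This gives the right-hand inequality of \eqref{eqn:comparable} with no use of the doubling hypothesis.

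For the lower bound I would argue that $I$ can be reached from $Q$ by a bounded number of ``doubling steps'' within whichever grid ($\D$ or $\Dd$) contains $I$. Concretely, suppose $I \in \D$ (the case $I \in \Dd$ is identical, using $\Dd$-doubling). Let $Q^*$ be the smallest dyadic interval in $\D$ containing $Q$; then $Q^* \subset I$ because $I \in \D$ also contains $Q$, so by nestedness either $Q^* \subset I$ or $I \subset Q^*$, and minimality of $Q^*$ together with $|I| \le C(\delta)|Q|$ forces $Q^* \subset I$ with $|Q^*| \le |I| \le C(\delta)|Q|$. Now I want a lower bound on $\intav_Q \om$ in terms of $\intav_{Q^*}\om$, and then on $\intav_{Q^*}\om$ in terms of $\intav_I \om$. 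The second of these is a straightforward chain of dyadic doubling: $I$ is obtained from $Q^*$ by at most $\log_2(|I|/|Q^*|) \le \log_2 C(\delta)$ applications of ``pass to the dyadic parent,'' each costing a factor $C_{\textup{dy}}$ in $\om$-measure, while the lengths also multiply up, so $\intav_{Q^*}\om \ge (C_{\textup{dy}})^{-\log_2 C(\delta)}\intav_I \om$ (keeping careful track that $\om(I) \le C_{\textup{dy}}^{k}\om(Q^*)$ and $|I| = 2^k|Q^*|$ combine in the right direction).

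The remaining and slightly more delicate point is comparing $\intav_Q \om$ with $\intav_{Q^*}\om$, i.e.\ controlling the average of $\om$ over the possibly non-dyadic interval $Q$ by the average over its dyadic hull $Q^*$. Here the standard fact is that a dyadic-doubling weight is comparable on an interval and on its dyadic children: by minimality, $Q$ is not contained in either dyadic child of $Q^*$, so $Q$ meets both children, hence $Q$ contains at least a definite fraction—actually, more carefully, one uses that $Q^*$ has length at most $4|Q|$ (a quantitative refinement of minimality: if $|Q^*| > 4|Q|$ then $Q$ would lie in a grandchild of $Q^*$, contradiction), so $Q$ occupies at least a quarter of $Q^*$ and by iterating dyadic doubling at most $\log_2(|Q^*|/|Q|) \le 2$ more times we get $\intav_Q \om \ge (C_{\textup{dy}})^{-c}\intav_{Q^*}\om$ for a small absolute constant. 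Combining, the total loss is $(C_{\textup{dy}})^{-\log_2 C(\delta) - c}$, which the authors have written as $(C_{\textup{dy}})^{-\log_2(4C(\delta))}$ by absorbing the absolute constant into the $4$.

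I expect the main obstacle to be the bookkeeping of exponents: one must be scrupulous that each ``pass to parent'' step multiplies $\om(\cdot)$ by $C_{\textup{dy}}$ \emph{and} doubles $|\cdot|$, so that in the \emph{average} the net factor per step is $C_{\textup{dy}}/2$, not $C_{\textup{dy}}$; since $C_{\textup{dy}} \ge 1$ this still only helps, but the count of steps—$\log_2(|I|/|Q|) \le \log_2 C(\delta)$ for the dyadic-to-$I$ part plus a bounded number for the $Q$-to-$Q^*$ part—must be assembled to land exactly on the stated exponent $-\log_2(4C(\delta))$. I would also need to note at the outset that $C(\delta) \ge 1$ (clear from $Q \subset I$) so all logarithms are nonnegative and the exponent is well-defined.
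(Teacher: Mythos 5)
Your upper bound is correct and is exactly the paper's argument. The lower bound, however, has a genuine gap in the passage from $Q$ to its dyadic hull $Q^*$. First, the quantitative claim $|Q^*|\le 4|Q|$ is false: minimality of $Q^*$ only says that $Q$ is contained in neither child of $Q^*$, which already happens whenever $Q$ straddles the midpoint of $Q^*$, and an arbitrarily short interval can do that. Concretely, for $\delta=1/3$ one has $d(\delta)=1/3$ and $C(\delta)=6$; the interval $Q=[5/12,7/12]$ has $|Q|=1/6$, contains no point of $\Z$ or of $\tfrac13+\Z$, so Mei's proposition produces $I=[0,1)\in\D$, while $Q$ contains $1/2$ and hence its dyadic hull is $Q^*=[0,1)$ with $|Q^*|=6|Q|>4|Q|$. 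Your implication ``if $|Q^*|>4|Q|$ then $Q$ lies in a grandchild of $Q^*$'' fails because containment in a grandchild requires $Q$ to avoid the grandchildren's endpoints, not merely to be short. The true bound is only $|Q^*|\le|I|\le C(\delta)|Q|$, so the number of ``extra'' generations between $Q$ and $Q^*$ is not an absolute constant.

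Second, and more seriously, even granting a bound on $|Q^*|/|Q|$ you never justify $\intav_Q\om\ge (C_{\textup{dy}})^{-c}\intav_{Q^*}\om$. The hypothesis is dyadic doubling, which compares a dyadic interval only with its dyadic parent; it says nothing directly about the non-dyadic interval $Q$. To invoke it one must first exhibit a dyadic interval $J\subset Q$ of length comparable to $|Q|$ and chain upward from $J$. That is precisely the paper's device: with $2^{N-1}<C(\delta)\le 2^N$ it partitions $I$ into $2^{N+1}$ subintervals of the same grid of length $|I|/2^{N+1}\le|Q|/2$, notes that at least one such $J$ must be wholly contained in $Q$, and then estimates $\intav_I\om\le (C_{\textup{dy}})^{N+1}|I|^{-1}\int_J\om\le (C_{\textup{dy}})^{N+1}\intav_Q\om$ with $N+1<\log_2(4C(\delta))$. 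Once you supply such a $J$ and count the generations from $J$ up to $I$, your two-stage argument reduces to the paper's single chain (and lands on the same exponent); as written, the key step producing $J$ is absent and the exponent bookkeeping rests on the false inequality $|Q^*|\le 4|Q|$.
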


\begin{proof} % From notes 3/11/09.
    Let $N$ be the unique integer such that $2^{N - 1} < C(\delta) \leq
    2^N$. Then $N + 1 < \log_2 (4C(\delta))$, and
    \[
        \frac{|I|}{2^{N+1}}
        \leq \frac{|I|}{2C(\delta)}
        \leq \frac{|Q|}{2}.
    \]
    Therefore, considering the $2^{N + 1}$ subintervals~$J$ of
    $I$ of length $|J| = |I|/2^{N + 1}$ ($J$ dyadic if
    $I\in\D$, $\delta$-dyadic if $I\in\Dd$), we can see that
    one of these intervals~$J$ must be completely contained in~$Q$. For
    this~$J$, we have
    \[
        \intav_I \om
        = \frac{1}{|I|} \int_I \om
        \leq (C_\textup{dy})^{N + 1} \frac{1}{|I|}  \int_J \om
        \leq  (C_\textup{dy})^{\log_2 (4C(\delta))} \frac{1}{|Q|}\int_Q \om
        = (C_\textup{dy})^{\log_2 (4C(\delta))} \intav_Q \om.
    \]

    Moreover, since $Q\subset I$, $|I| \leq C(\delta)|Q|$, and $\om
    \geq 0$, we have
    \[
        \intav_Q \om
        = \intq \om
        \leq \frac{C(\delta)}{|I|} \int_I \om
        = C(\delta) \intav_I \om.
        \qedhere
    \]
\end{proof}

We note that if $\om$ belongs to $A_p^d$ or $RH_p^d$ then $\om$ is
dyadic $\D$-doubling. Similarly $A_p^{\delta}$ or $RH_p^{\delta}$
functions are dyadic $\D^\delta$-doubling. Thus functions in
$A_p^d\cap A_p^{\delta}$ or in $RH_p^d\cap RH_p^{\delta}$ have the
comparability property~\eqref{eqn:comparable}, by
Proposition~\ref{prop:comparable}.

\begin{proof}[Proof of
    Theorem~\ref{thm:intersectionoftranslatesApRHpdbl}]
    We need only prove the reverse inclusions ($\supset$).

    (a) Suppose $\om$ is dyadic doubling with respect to
    both~$\D$ and $\Dd$. We show that $\om$ is doubling. Take
    an interval $Q$ in $\R$. The double $\widetilde{Q}$ of~$Q$
    is the interval $\widetilde{Q}$ that has the same midpoint
    as~$Q$ and twice the length: $|\widetilde{Q}| = 2|Q|$. Let
    $I$ be an interval of the type guaranteed by
    Proposition~\ref{prop:Mei} applied to~$\widetilde{Q}$. Take
    $N$ such that $2^{N-1} \leq C(\delta) < 2^N$. Then we have
    $|I| \leq 2^N|\widetilde{Q}|$.

    Consider the dyadic subintervals~$J$ of $I$ at scale $|J| =
    2^{-N-2}|I|$. These dyadic subintervals $J$ satisfy
    $|J| \leq |Q|/2$, which implies that $Q$ contains at
    least one such dyadic subinterval, denoted by $J$. Now we
    have
    \[
        \int_{\widetilde{Q}} \om
        \leq  \int_I \om
        \leq C_{\textup{dy}}^{N + 2} \int_J \om
        \leq C_{\textup{dy}}^{N + 2} \int_Q \om
        \leq C_{\textup{dy}}^{\log_2(2^3 C(\delta))} \int_Q \om.
    \]
    Thus $\om$ is doubling, with doubling constant at
    most~$C_{\textup{dy}}^{\log_2(2^3 C(\delta))}$.

    (b) Suppose $\om$ belongs to both $A_p^d$ and
    $A_p^{\delta}$, for some $p$ with $1 < p < \infty$. Take
    an interval~$Q$ in $\R$. Let $I$ be the interval guaranteed
    by Proposition~\ref{prop:Mei} applied to~$Q$. Then from
    Proposition~\ref{prop:comparable}, we have
    \[
        \intav_Q \om
        \leq C(\delta) \intav_I \om.
    \]
    Similarly, since $\om\in A_p^d\cap A_p^\delta$ implies that
    $\om^{-{1\over p-1}}$ belongs to $A_{p'}^d\cap
    A_{p'}^\delta$ where $p'$ is the conjugate index of $p$, we
    have
    \[
        \intav_Q \left(\frac{1}{\om}\right)^\frac{1}{p-1}
        \leq C(\delta) \intav_I \left(\frac{1}{\om}\right)^\frac{1}{p-1}.
    \]
    Therefore
    \[
        \left(\intav_Q \om\right)
            \left(\intav_Q \left(\frac{1}{\om}\right)^\frac{1}{p-1}\right)^{p-1}
        \leq C(\delta)^p \left(\intav_I \om\right)
            \left(\intav_I \left(\frac{1}{\om}\right)^\frac{1}{p-1}\right)^{p-1}
        \leq C(\delta)^p \max\{A_p^d(\om), A_p^{\delta}(\om)\}.
    \]
    Thus $\om$ lies in~$A_p$, with $A_p$ constant
    \[
        A_p(\om)
        \leq C(\delta)^p \max\{A_p^d(\om), A_p^{\delta}(\om)\}.
    \]

    For $p = 1$, suppose $\om$ belongs to both $A_1^d$
    and $A_1^{\delta}$, and let $V_1^d = \max\{A_1^d(\om),
    A_1^{\delta}(\om)\}$. Take~$Q$, $I$, and~$N$ as above. By
    Proposition~\ref{prop:comparable} we see that
    \[
        \intav_Q \om
        \leq C(\delta) \intav_I \om
        \leq C(\delta) \, V_1^d \, \essinf_{x\in I} \om(x)
        \leq C(\delta) \, V_1^d \, \essinf_{x\in Q} \om(x).
    \]
    Thus $\om\in A_1$ with
    \[
        A_1(\om) \leq C(\delta) \,
        \max\{A_1^d(\om), A_1^{\delta}(\om)\}.
    \]

    For $p = \infty$, if $\om\in A_\infty^d \cap A_\infty^{\delta}$, then there
    exist $p_1$ and~$p_2$ such that $\om\in A_{p_1}^d$ and
    $\om\in A_{p_2}^{\delta}$. Let $p = \max\{p_1, p_2\}$.
    Then $\om\in A_p^d \cap A_p^{\delta}$, so by the cases $1
    \leq p < \infty$ above, we have $\om\in A_p \subset
    A_\infty$.

    The dependence of the constants is shown in
    Figure~\ref{fig:depconstsAinfty}. In particular $A_\infty(\om)$ depends only on
    $\max\{A_\infty^d(\om), A_\infty^{\delta}(\om)\}$.

\setlength{\unitlength}{1cm}
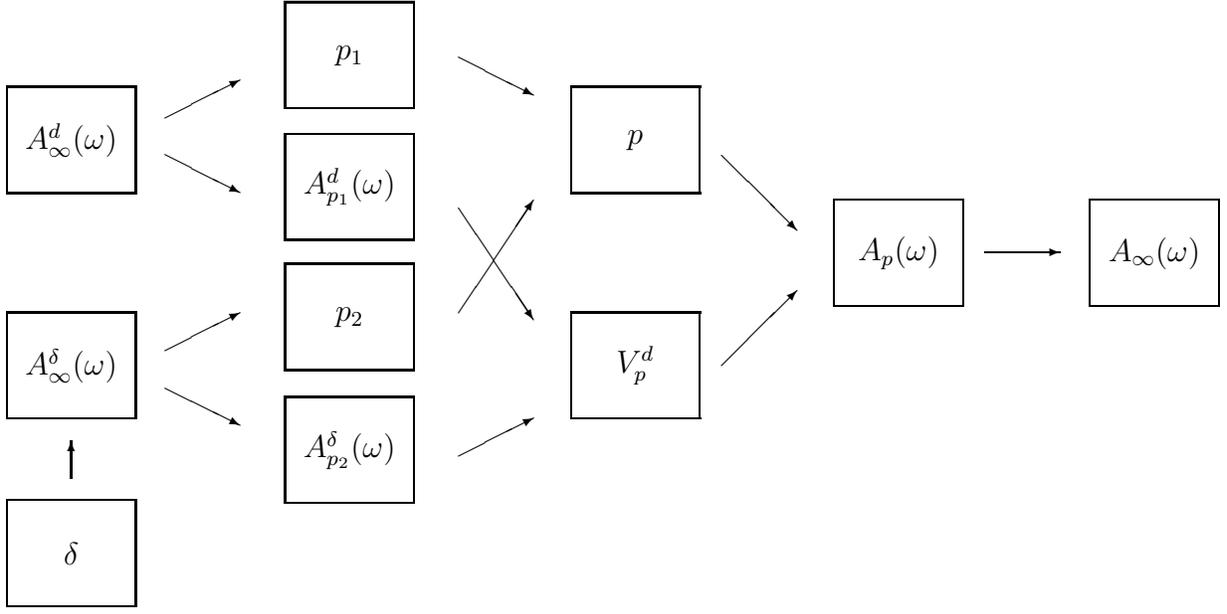
\begin{figure}[h]
    \begin{center}
        \begin{picture}(15,7)(0.5,-1)
% boxes with names of constants
% first column
            \put(0,3){\framebox(1.7,1.4){$A_\infty^d(\om)$}}
            \put(0,0){\framebox(1.7,1.4){$A_\infty^{\delta}(\om)$}}
            \put(0,-2.5){\framebox(1.7,1.4){$\delta$}}
% second column
            \put(3.7,4.125){\framebox(1.7,1.4){$p_1$}}
            \put(3.7,2.375){\framebox(1.7,1.4){$A_{p_1}^d(\om)$}}
            \put(3.7,0.645){\framebox(1.7,1.4){$p_2$}}
            \put(3.7,-1.125){\framebox(1.7,1.4){$A_{p_2}^{\delta}(\om)$}}
% third column
            \put(7.5,3){\framebox(1.7,1.4){$p$}}
            \put(7.5,0){\framebox(1.7,1.4){$V_p^d$}}
% fourth column
            \put(11.0,1.5){\framebox(1.7,1.4){$A_p(\om)$}}
% fifth column
            \put(14.4,1.5){\framebox(1.7,1.4){$A_\infty(\om)$}}
% arrows and labels
% first column
            \put(.85,-0.8){\vector(0,1){0.5}}
% second column
            \put(2.1,4.0){\vector(2,1){1.0}}
            \put(2.1,3.5){\vector(2,-1){1.0}}
            \put(2.1,0.9){\vector(2,1){1.0}}
            \put(2.1,0.4){\vector(2,-1){1.0}}
% third column
            \put(6.0,4.8){\vector(2,-1){1.0}}
            \put(6.0,1.4){\vector(2,3){1.0}}
            \put(6.0,2.8){\vector(2,-3){1.0}}
            \put(6.0,-0.5){\vector(2,1){1.0}}
% fourth column
            \put(9.5,3.5){\vector(1,-1){1.0}}
            \put(9.5,0.7){\vector(1,1){1.0}}
% fifth column
            \put(13.0,2.2){\vector(1,0){1.0}}
%% first column
%            \put(3,3.7){\vector(1,0){2.2}}
%%            \put(3,4){\shortstack{dyadic\\ Lemma~(a)}}
%            \put(3,3){\vector(3,-2){2.2}}
%            \put(3,0.7){\vector(1,0){2.2}}
%%            \put(3,1){\shortstack{dyadic\\ Lemma~(b)}}
%% second column
%            \put(9,3.7){\vector(1,0){1.8}}
%%            \put(9,4){\shortstack{Lemma,\\
%%                                  $\beta = 1$}}
%            \put(9,0.7){\vector(1,0){1.8}}
%%            \put(9,1){\shortstack{Lemma,\\
%%                                  $\beta = \frac{-1}{p-1}$}}
%% third column
%            \put(12.8,3.7){\vector(3,-2){1.2}}
% %           \put(12.8,1.9){\shortstack{Lemma\\
% %                                 \ref{lem:ApRHpcharn}(b)}}
% fourth column
%            \put(12.8,0.7){\vector(1,0){1.2}}
        \end{picture}
    \end{center}
    \vskip2cm
    \caption{Dependence of the constants in the proof of
        Theorem~1, for the case of $A_\infty$. Here $V_p^d
        := \max\{A_p^d(\om),A_p^{\delta}(\om)\}$.}
    \label{fig:depconstsAinfty}
\end{figure}

%\vskip1cm

    (c) Suppose $\om$ belongs to $RH_p^d \cap RH_p^{\delta}$,
    for some $p$ with $1 < p < \infty$. Then $\om$ is both
    dyadic $\D$-doubling and dyadic $\D^\delta$-doubling with
    doubling constant $C_{\textup{dy}}$. Moreover, $\om^p$ is
    also both dyadic $\D$-doubling and dyadic
    $\D^\delta$-doubling, since for each $I\in \D$ or
    $\D^\delta$,
\begin{eqnarray*}
       \left(\intav_{\widetilde{I}} \om^p\right)^{1/p}
       &\leq& RH_p(\om)\intav_{\widetilde{I}} \om
            = \frac{RH_p(\om)}{2}\frac{1}{|I|}\int_{\widetilde{I}} \om
            \leq  \frac{RH_p(\om)}{2} C_{\textup{dy}} \frac{1}{|I|} \int_{I} \om\\
       &\leq& \frac{RH_p(\om)}{2} C_{\textup{dy}} \left(\intav_{I} \om^p\right)^{1/p},
\end{eqnarray*}
    which yields that
    \[
        \int_{\widetilde{I}} \om^p
        \leq 2\left(\frac{RH_p(\om)}{2}C_{\textup{dy}}\right)^p \int_{I} \om^p,
    \]
    where $\widetilde{I}$ is the dyadic parent of $I$. That is, $\om^p$
    is also both dyadic $\D$-doubling and dyadic
    $\D^\delta$-doubling with constant
    $2^{1-p}\left(RH_p(\om)C_{\textup{dy}}\right)^p$.

    Now take~$Q$, $I$, and~$N$ as in part (b). Since $\om$ and
    $\om^p$ are both dyadic $\D$-doubling and dyadic
    $\D^\delta$-doubling, Proposition~\ref{prop:comparable} implies that
    \[
        \left(\intav_Q \om^p\right)^{1/p}
        \leq C(\delta)^{1/p} \left(\intav_I \om^p\right)^{1/p}
        \qquad \text{and} \qquad
        \intav_I \om
        \leq (C_\textup{dy})^{\log_2 (4C(\delta))} \intav_Q \om.
    \]
    Thus
    \[
        \left(\intav_Q \om^p\right)^{1/p} \left(\intav_Q \om\right)^{-1}
        \leq C(\delta)^{1/p} (C_\textup{dy})^{\log_2 (4C(\delta))}\left(\intav_I \om^p\right)^{1/p}
             \left(\intav_I \om\right)^{-1}.
    \]
    So $\om$ belongs to~$RH_p$, and
    \[
        RH_p(\om)
        \leq C(\delta)^{1/p} \, (C_\textup{dy})^{\log_2 (4C(\delta))} \,
            \max\{RH_p^d(\om), RH_p^{\delta}(\om)\}.
    \]

    For $p = 1$, suppose $\om$ belongs to both $RH_1^d$ and
    $RH_1^{\delta}$. By the comment after
    Definition~\ref{def:RHp}, $\om$ belongs to both
    $A_\infty^d$ and $A_\infty^{\delta}$, with constants
    depending only on $RH_1^d(\om)$ and $RH_1^{\delta}(\om)$.
    So by part~(b) above, $\om$ belongs to $A_\infty$, and by
    the same comment, $\om$ belongs to $RH_1$, with constant
    $RH_1(\om)$ depending only on $RH_1^d(\om)$
    and~$RH_1^{\delta}(\om)$.

    For $p = \infty$, suppose $\om$ belongs to both $RH_\infty^d$ and
    $RH_\infty^{\delta}$, and let $V_\infty^d =
    \max\{RH_\infty^d(\om), RH_\infty^{\delta}(\om)\}$.
    Take~$Q$, $I$, and~$N$ as above. Then
    \[
        \esssup_{x\in Q} \om(x)
        \leq \esssup_{x\in I} \om(x)
        \leq V_\infty^d \intav_I \om
        \leq V_\infty^d \, (C_\textup{dy})^{\log_2 (4C(\delta))} \intav_Q \om.
    \]
    Thus $\om$ belongs to $RH_\infty$ and
    \[
        RH_\infty(\om) \leq (C_\textup{dy})^{\log_2 (4C(\delta))} \,
            \max\{RH_\infty^d(\om), RH_\infty^{\delta}(\om)\}.
        \qedhere
    \]
\end{proof}

%----------------------------------------------------------

\subsection{Multiparameter results for $A_p$ and $RH_p$}
\label{subsec:productresultsApRHpdbl}

We extend the above results for $A_p(\R)$ and $RH_p(\R)$ to the
multiparameter ($\R\otimes\cdots\otimes\R)$ setting. For ease
of notation, we write the statements and proofs for the product
space $\R\otimes\R$ of two factors. The same proofs go through
for arbitrarily many factors.

As noted in~\cite{PWX}, the theory of product weights was
developed by K.-C.~Lin in his thesis~\cite{L}, while the dyadic
theory was developed in Buckley's paper~\cite{B}. The product
$A_p$ and $RH_p$ weights and the product doubling weights, and
their dyadic analogues, are defined exactly as in
Definitions~\ref{def:Ap}--\ref{def:RHp}, with intervals in~$\R$
being replaced by rectangles in~$\R\otimes\R$. It follows that
a product weight belongs to $A_p(\R\otimes\R)$ if and only if
it belongs to $A_p(\R)$ in each variable separately.

To be precise, $\om\in A_p(\R\otimes\R)$ if and only if
$\om(\cdot,y) \in A_p(\R)$ uniformly for a.e.~$y\in\R$ and
$\om(x,\cdot) \in A_p(\R)$ uniformly for a.e.~$x\in\R$. In one
direction this is a consequence of the Lebesgue Differentiation
Theorem, letting one side of the rectangle shrink to a point.
The converse uses the equivalence between $\om\in
A_p(\R\otimes\R)$ and maximal inequality of the strong maximal
function~\cite[p.83]{S}. Further, the $A_p(\R\otimes\R)$
constant depends only on the two $A_p(\R)$ constants, and vice
versa.

The analogous characterizations in terms of the separate
variables hold for product~$RH_p$ weights and for product
doubling weights, and for the dyadic product $A_p$, $RH_p$, and
doubling weights.

We denote by $A_p^{d,d} = A_p^{d,d}(\R\otimes\R)$ the class of
strong dyadic weights, meaning the weights $\om(x,y)$ such that
\begin{enumerate}
    \item[(i)] for a.e.~fixed $y$, $\om(\cdot,y)$ lies in
        $A_p^d(\R)$, and

    \item[(ii)] for a.e.~fixed $x$, $\om(x,\cdot)$ lies in
        $A_p^d(\R)$,
\end{enumerate}
with uniform $A_p^d(\R)$ constants. The class $A_p^{d,\delta}$
is the same as $A_p^{d,d}$ except that $\om(x,\cdot)$ lies in
$A_p^\delta(\R)$, using in the second variable the translated
dyadic grid~$\Dd$. Similarly for $A_p^{\delta,d}$ and
$A_p^{\delta,\delta}$, and for the corresponding variations of
$RH_p^{d,d}$.

\begin{thm}\label{thm:product intersectionoftranslatesApRHpdbl}
    Suppose $\delta\in\R$ is far from dyadic
    rationals, in the sense that (\ref{eqn:meicondR})
    holds. Then the following assertions hold:
    \begin{enumerate}
        \item[\textup{(a)}] A weight $\om(x,y)$ is a
            product doubling weight if and only if $\om$ is
            dyadic doubling with respect to each of
            $\D\times\D$, $\D\times\Dd$, $\Dd\times\D$, and
            $\Dd\times\Dd$.

        \item[\textup{(b)}] For $1 \leq p \leq \infty$,
            biparameter $A_p$ is the intersection of four
            translates of biparameter dyadic $A_p$:
            \[
                A_p(\R\otimes\R)
                = A_p^{d,d}(\R\otimes\R)
                    \cap A_p^{d,\delta}(\R\otimes\R)
                    \cap A_p^{\delta,d}(\R\otimes\R)
                    \cap A_p^{\delta,\delta}(\R\otimes\R).
            \]

        \item[\textup{(c)}] For $1 \leq p \leq \infty$,
            biparameter $RH_p$ is the intersection of four
            translates of biparameter dyadic $RH_p$:
            \[
                RH_p(\R\otimes\R)
                = RH_p^{d,d}(\R\otimes\R)
                    \cap RH_p^{d,\delta}(\R\otimes\R)
                    \cap RH_p^{\delta,d}(\R\otimes\R)
                    \cap RH_p^{\delta,\delta}(\R\otimes\R).
            \]
    \end{enumerate}
    The constant $A_p(\om)$ depends only on $A_p^{d,d}(\om)$,
    $A_p^{d,\delta}(\om)$, $A_p^{\delta,d}(\om)$, and
    $A_p^{\delta,\delta}(\om)$, and vice versa, and similarly
    for the other classes.

    In the case of $k$~parameters, the analogous results hold
    using the intersection of $2^k$ translates of the dyadic
    classes.
\end{thm}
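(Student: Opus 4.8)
The plan is to reduce the multiparameter statement to the one-parameter Theorem~\ref{thm:intersectionoftranslatesApRHpdbl} via the tensor-product structure of product weights, rather than attempting a direct geometric argument with rectangles. The organizing observation, already noted in the text, is that membership in each of the product classes --- be it $A_p(\R\otimes\R)$, $RH_p(\R\otimes\R)$, product doubling, or any of their dyadic variants $A_p^{d,d}$, $A_p^{d,\delta}$, etc. --- is equivalent to a uniform one-parameter condition in each variable separately, with the product constant controlled by the two one-parameter constants and vice versa. So the whole theorem factors through the variables one at a time.

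First I would set up the separate-variables dictionary carefully: for a.e.~fixed $y$, the slice $\om(\cdot,y)$ lies in $A_p(\R)$ with uniform constant iff $\om\in A_p(\R\otimes\R)$ (this direction uses the Lebesgue Differentiation Theorem by shrinking one side of a rectangle; the converse uses the strong-maximal-function characterization \cite[p.83]{S}), and likewise $\om(x,\cdot)\in A_p(\R)$ uniformly. The dyadic product classes admit the exact same reading with $\D$ or $\Dd$ inserted in the appropriate slot; for instance $\om\in A_p^{d,\delta}(\R\otimes\R)$ iff $\om(\cdot,y)\in A_p^d(\R)$ uniformly in $y$ and $\om(x,\cdot)\in A_p^\delta(\R)$ uniformly in $x$. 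Then the argument runs slot by slot. Suppose $\om$ lies in all four of $A_p^{d,d}$, $A_p^{d,\delta}$, $A_p^{\delta,d}$, $A_p^{\delta,\delta}$. Fixing $y$ and reading the first variable: from $A_p^{d,d}$ and $A_p^{\delta,d}$ we get $\om(\cdot,y)\in A_p^d(\R)\cap A_p^\delta(\R)$ uniformly in $y$, so by Theorem~\ref{thm:intersectionoftranslatesApRHpdbl}(b) we get $\om(\cdot,y)\in A_p(\R)$ uniformly in $y$, with constant bounded in terms of $C(\delta)$ and the two input constants. Symmetrically, fixing $x$ and using $A_p^{d,d}$ and $A_p^{d,\delta}$ (or equally $A_p^{\delta,d}$ and $A_p^{\delta,\delta}$) gives $\om(x,\cdot)\in A_p(\R)$ uniformly in $x$. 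Having the one-parameter $A_p(\R)$ condition uniformly in each variable, the separate-variables characterization of product $A_p$ returns $\om\in A_p(\R\otimes\R)$, with product constant controlled by the two slice constants, hence ultimately by $C(\delta)$ and $\max$ of the four dyadic product constants. The reverse inclusion is the trivial one, since $A_p\subset A_p^d\cap A_p^\delta$ in each variable gives $A_p(\R\otimes\R)$ inside each of the four product dyadic classes. Parts (a) and (c) are identical in structure, invoking Theorem~\ref{thm:intersectionoftranslatesApRHpdbl}(a) for doubling and (c) for $RH_p$ slot by slot; for $RH_p^{d,d}$ etc.\ one must remember that the dyadic $RH_p$ classes carry the dyadic-doubling clause, so the slice conditions include uniform dyadic doubling of the slices, which is exactly what part~(a) of the one-parameter theorem consumes. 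The extremal cases $p=1,\infty$ need no extra work: they are already handled inside Theorem~\ref{thm:intersectionoftranslatesApRHpdbl}, and $A_\infty=RH_1$ together with the nesting $A_\infty=\bigcup_p A_p$ transfers verbatim to the product setting.

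For the $k$-parameter statement I would just iterate: a weight on $\R^{m_1}\otimes\cdots\otimes\R^{m_k}$ lies in product $A_p$ iff it lies in one-parameter $A_p$ in each of the $k$ slots uniformly, and intersecting over the $2^k$ choices of $\D$ versus $\Dd$ in each slot lets the one-parameter theorem upgrade $A_p^d\cap A_p^\delta$ to $A_p$ one slot at a time; after $k$ applications every slot is a genuine $A_p(\R^{m_j})$ condition. (Within a single slot $\R^{m_j}$ with $m_j\geq2$ one uses the cube version of Theorem~\ref{thm:intersectionoftranslatesApRHpdbl}, which the text states holds with intervals replaced by cubes.) The only genuine subtlety --- the place I expect to spend the most care --- is bookkeeping the uniformity of the constants through the slot-by-slot argument: one must check that the one-parameter theorem's bound is uniform over the a.e.\ slices, i.e.\ that taking an essential supremum over $y$ of $A_p^d(\om(\cdot,y))$ commutes with the passage to $A_p(\om(\cdot,y))$, and likewise that when we afterwards read the other variable the slice $\om(x,\cdot)$ is still controlled uniformly in $x$ --- which it is, because the upgraded constant is a fixed function of $C(\delta)$ and the original uniform dyadic constants, none of which depend on the slice. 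Once that is pinned down, the constant dependence claimed in the theorem (product constant $\Leftrightarrow$ the $2^k$ dyadic product constants) follows by simply composing the one-parameter bounds.
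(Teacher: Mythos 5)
Your proposal is correct and follows essentially the same route as the paper: both reduce to the one-parameter Theorem~\ref{thm:intersectionoftranslatesApRHpdbl} via the separate-variables (slice) characterization of the product classes, applying it in each variable and tracking constants through the a.e.-uniform slice bounds. The paper's proof is a brief sketch of exactly this iteration; your version simply spells out the bookkeeping (which pairs of the four dyadic classes feed each variable, the dyadic-doubling clause for $RH_p$, and the uniformity of constants) in more detail.
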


\begin{proof}
The proof is by iteration of the one-parameter argument. We
sketch the case of $A_p$ for $1 < p < \infty$. The other cases
are similar. Take $\om\in A_p(\R\otimes\R)$. By our
one-parameter result
(Theorem~\ref{thm:intersectionoftranslatesApRHpdbl}), for
almost every~$y$ we have $\om(\cdot,y)\in A_p(\R) = A_p^d(\R)
\cap A_p^\delta(\R)$, and similarly in the second variable.
Thus $\om\in A_p^{d,d}(\R\otimes\R)$, and similarly $\om\in
A_p^{d,\delta}(\R\otimes\R)$, $\om\in
A_p^{\delta,d}(\R\otimes\R)$, and $\om\in
A_p^{\delta,\delta}(\R\otimes\R)$. Conversely, if $\om$ is in
the intersection of the four dyadic spaces, then for a.e.~$y$,
$\om(\cdot,y)$ belongs to $A_p^d(\R) \cap A_p^\delta(\R) =
A_p(\R)$. Similarly, for a.e.~$x$, $\om(x,\cdot)$ belongs to
$A_p(\R)$. Therefore $\om\in A_p(\R\otimes\R)$. Moreover, the
claimed dependence of the constants follows immediately from
the one-parameter result.
\end{proof}

%----------------------------------------------------------
\section{$\bmo$ and product $\bmo$}
\label{sec:resultsBMO}
\setcounter{equation}{0}

We extend Mei's $\bmo$ result to the biparameter case. We begin
by recalling some observations and background results; for
details see~\cite{CF, S}. Next we give a proof of Mei's
one-parameter result, still using Mei's key lemma but
expressing \bmo\ in terms of Carleson measures. Then we extend
this proof to the multiparameter case.

\begin{prop}\label{prop:psiprops}
    Let $\psi \in C_\text{c}^\infty(\R)$ be a smooth function,
    supported in the interval $[-1,1]$, such that $\int \psi(t)
    \, dt = 0$. For $y > 0$ let $\psi_y(t) := \frac{1}{y} \,
    \psi\big(\frac{t}{y}\big)$. For $t\in\R$ and $y > 0$ let
    $I_{t,y} := [t - y, t + y]$. Then
    \begin{enumerate}
        \item[\textup{(i)}] if $(t,y)\in T(I_0)$ then
            $I_{t,y}\subset 3I_0$, where $3I_0$ is the
            interval with the same midpoint as $I_0$ and
            length $|3I_0| = 3|I_0|$,

        \item[\textup{(ii)}] $\supp \psi_y \subset [-y,y]$,

        \item[\textup{(iii)}] $\supp \psi_y(t - \cdot)
            \subset I_{t,y}$,

        \item[\textup{(iv)}] for $I\in\D$ or
            $I\in\D^\delta$, and for the Haar function
            $h_I$ on~$I$, if $I\cap I_{t,y} = \emptyset$
            then $h_I * \psi_y(t) = 0$, and

        \item[\textup{(v)}] if $I_{t,y}\subset Q_l$ or
            $I_{t,y}\subset Q_r$, where $Q_l$ and $Q_r$ are
            the left and right halves respectively of an
            interval~$Q$, then $h_I * \psi_y(t) = 0$.
    \end{enumerate}
\end{prop}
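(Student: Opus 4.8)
The plan is to read off all five items from a single observation — that the dilate $\psi_y$ is supported in $[-y,y]$ — together, in the one case that needs it, with the cancellation $\int\psi=0$. None of the five assertions is deep; these are exactly the support/cancellation facts one needs to run the Carleson-measure approach to $\bmo$, and I would organise the proof by logical dependency rather than in the order listed. First, (ii) and (iii): if $\psi_y(s)\neq 0$ then $\psi(s/y)\neq 0$, hence $s/y\in[-1,1]$ and $s\in[-y,y]$, which is (ii); replacing $s$ by $t-s$, $\psi_y(t-s)\neq 0$ forces $t-s\in[-y,y]$, i.e.\ $s\in[t-y,t+y]=I_{t,y}$, which is (iii).

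For (i) I would simply unwind the definition of the Carleson region $T(I_0)$: a point $(t,y)\in T(I_0)$ has $t\in I_0$ and $0<y\le|I_0|$. Writing $I_0=[a,a+L]$ with $L=|I_0|$, we get $t-y\ge a-L$ and $t+y\le a+2L$, and since $3I_0=[a-L,a+2L]$ this gives $I_{t,y}=[t-y,t+y]\subset 3I_0$. If the surrounding section's normalisation of $T(I_0)$ restricts $y$ to a smaller range, or defines $T(I_0)$ as the literal tent $\{(t,y):I_{t,y}\subset I_0\}$, the same computation yields an even stronger inclusion.

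Finally, (iv) and (v), both of which rest on (iii). By (iii),
\[
    h_I*\psi_y(t)=\int_\R h_I(s)\,\psi_y(t-s)\,ds=\int_{I_{t,y}}h_I(s)\,\psi_y(t-s)\,ds,
\]
and this identity uses nothing about the grid, so it holds for $I\in\D$ and for $I\in\D^\delta$ alike. For (iv): if $I\cap I_{t,y}=\emptyset$, then $h_I$ vanishes almost everywhere on $I_{t,y}$ — its support lies in $\overline I$, which meets $I_{t,y}$ in at most one point — so the integral is $0$. For (v): the Haar function in play is the one adapted to $Q$, and it equals $|Q|^{-1/2}$ on $Q_l$ and $-|Q|^{-1/2}$ on $Q_r$; hence if $I_{t,y}\subset Q_l$ (respectively $I_{t,y}\subset Q_r$) it is constant on the domain of integration, and
\[
    h_Q*\psi_y(t)=\pm|Q|^{-1/2}\int_{I_{t,y}}\psi_y(t-s)\,ds=\pm|Q|^{-1/2}\int_\R\psi_y(u)\,du=0,
\]
the last equality because $\int\psi_y=\int\psi=0$.

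I do not expect a genuine obstacle; the content is entirely elementary. The two places that want a little care are the constant in (i) — the factor $3$ is exactly what the box convention for $T(I_0)$ produces, and is sharp — and, in (v), recognising that the relevant Haar function must be the one attached to $Q$ (equivalently, reading the statement's $h_I$ with $I=Q$), since for a Haar function attached to an unrelated interval the stated identity would fail. As already indicated, (ii)–(v) transfer verbatim from $\D$ to $\D^\delta$, because only the support of $\psi_y$ and the two-step piecewise-constant shape of the Haar functions are used, not any arithmetic property of the grid.
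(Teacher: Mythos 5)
Your proof is correct, and it follows exactly the route the paper indicates: the paper omits these proofs as elementary, remarking only that (v) holds because $h_Q$ is constant on each of $Q_l$ and $Q_r$, $\psi_y(t-\cdot)$ is supported in $I_{t,y}$, and $\int_\R \psi = 0$ --- which is precisely your argument, including the correct reading of $h_I$ as $h_Q$ in (v). Nothing is missing.
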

We omit the (elementary) proofs, except to note that part~(v)
holds since $h_Q$ is constant on each of $Q_l$ and $Q_r$,
$\psi_y$ is supported in $I_{t,y}$, and $\int_{\R} \psi = 0$.
Now we impose an additional condition (the
Calder\'on--Torchinsky condition) on $\psi$ as follows: there
exists a constant $C_{\psi}$ such that for any $\xi\not= 0$,
\begin{eqnarray}\label{eqn:CT condition}
   \int_0^\infty{|\widehat{\psi}(\xi t)|^2\over t}dt\leq C_{\psi}.
\end{eqnarray}

\begin{defn}
    For $f\in L^1_{\textup{loc}}(\R)$ and for each dyadic
    interval~$J$, define the projection $P_J$ of $f$ by
    \[
        P_J f(x)
        := \sum_{I\in\D, I \subset J} (f,h_I) h_I(x).
    \]
\end{defn}

%For $g\in L^2(\R)$ and for $\psi_y(t)$ as defined above,
%\begin{equation}
%    \Vert g * \psi_y \Vert_{L^2(\R)}
%    = \Vert g \Vert_{L^2(\R)}.
%\end{equation}
%[*** Perhaps only in the limit as $y\to 0$?]

If $f\in L^2(\R)$, then $\Vert f \Vert_{L^2(\R)}^2 =
\sum_{I\in\D} (f,h_I)^2$. If $f\in L^2(\R)$, then the following
standard Littlewood--Paley $L^2$ estimate holds:
\[
    \int\!\!\!\int_{(t,y)\in\R\otimes\R_+} \,
        |f * \psi_y(t)|^2 \, \frac{dt \, dy}{y}\leq C_{\psi}\|f\|_{L^2(\R)}^2
\]
$\psi\in C_\text{c}^\infty(\R)$, $\int \psi=0$ and satisfies (\ref{eqn:CT condition}).

\begin{defn}\label{def:AverageBMOd}
    A locally integrable function~$f$ belongs to the
    \emph{dyadic BMO space} $\bmo_d(\R)$ if  there
    is a constant~$C$ such that
    \begin{equation}\label{eqn:AverageBMOd}
    \|f\|_{\bmo_d(\R)} := \sup_{I\in\D} \intav_I |f(x) - f_I| \, dx < \infty,
    \end{equation}
    where $f_I := \intav_I f$.
\end{defn}
It follows from the John--Nirenberg theorem \cite[Theorem 2.1, p.230]{Gar} that for each $p>1$, the expression
$$
   \|f\|_{\bmo_{d,p}(\R)} := \sup_{I\in\D} \Big(\intav_I |f(x) - f_I|^p \, dx\Big)^{1/p}
$$
is comparable to $\|f\|_{\bmo_d(\R)}$.

We also have the following equivalent definition of $\bmo_d(\R)$ in
terms of dyadic Carleson measures.
\begin{defn}\label{def:CarlesonBMOd}
    A locally integrable function~$f$ belongs to the
    \emph{dyadic BMO space} $\bmo_d(\R)$ if there
    is a constant~$C$ such that for all dyadic intervals~$J$,
%    (whether dyadic or not),
    \begin{equation}\label{eqn:CarlesonBMOd}
        \sum_{I\in\D, I\subset J} (f,h_I)^2 \leq C|J|.
    \end{equation}
\end{defn}
We note that if in Definition \ref{def:CarlesonBMOd} we allow
$J$ to range over all intervals in $\R$, not only dyadic
intervals in $\R$, we recover the same dyadic \bmo\ space
$\bmo_d(\R)$, with comparable norms. This observation follows
from Proposition \ref{prop:dyadiccover} below and the fact that
the sum in inequality (\ref{eqn:CarlesonBMOd}) is over only
dyadic intervals $I$.

The equivalence of conditions (\ref{eqn:AverageBMOd}) and
(\ref{eqn:CarlesonBMOd}) can be seen as follows. First, suppose $f$
satisfies (\ref{eqn:AverageBMOd}). Then for each dyadic interval $J$,
\begin{eqnarray}
   {1\over |J|} \sum_{I\in\D, I\subset J} (f,h_I)^2
   &=& {1\over |J|} \sum_{I\in\D, I\subset J} \big((f-f_J)\chi_J,h_I\big)^2\nonumber\\
   &\leq& {1\over |J|} \sum_{I\in\D} \big((f-f_J)\chi_J,h_I\big)^2\nonumber\\
   &\leq& {1\over |J|} \|(f-f_J)\chi_J\|_{L^2(\R)}^2  \label{eqn:average to Carleson}\\
   &=& {1\over |J|} \int_J|f(x)-f_J|^2\,dx\nonumber\\
   &\leq& C \|f\|^2_{\bmo_d(\R)} ,\nonumber
\end{eqnarray}
which shows that $f$ satisfies condition
(\ref{eqn:CarlesonBMOd}). Here we use $\chi_J$ to denote the
characteristic function on $J$; in the first inequality we use
the fact that $(C,h_I)=0$ for any constant $C$; and in the last
inequality, we use the fact that the dyadic \bmo\ norms
$\|\cdot\|_{\bmo_d(\R)}$ and $\|\cdot\|_{\bmo_{d,2}(\R)}$ are
equivalent.

Conversely, suppose $f$ satisfies (\ref{eqn:CarlesonBMOd}). Then for
each dyadic interval $J$,
\begin{eqnarray}
   {1\over |J|} \int_J|f(x)-f_J|\,dx
   &\leq& \Big({1\over |J|} \int_J|f(x)-f_J|^2\,dx\Big)^{1/2}\nonumber\\
   &\leq& \Big({1\over |J|} \int |f(x)-f_J|^2\chi_J(x)\,dx\Big)^{1/2}\nonumber\\
   &\leq& \Big({1\over |J|} \sum_{I\in\D}\big((f-f_J)\chi_J, h_I\big)^2\Big)^{1/2}\label{eqn:Carleson to average}\\
   &\leq& \Big({1\over |J|} \sum_{I\in\D, \ I\subset J} \big( (f-f_I), h_I\big)^2\Big)^{1/2}\nonumber\\
   &\leq& C^{1/2},\nonumber
\end{eqnarray}
where the constant $C$ in the last inequality is from condition
(\ref{eqn:CarlesonBMOd}).

From the above estimates, we see that the smallest constant $C$
in condition (\ref{eqn:CarlesonBMOd}) is comparable to
    $\Vert f \Vert_{\bmo_d(\R)}^2$.

We define $\bmo_\delta(\R)$ similarly, in terms of both
averages and Carleson conditions, with respect to
the~collection $\Dd$ of translated dyadic intervals. Here
$\delta\in\R$, and $\Dd$ is defined at the start of
Section~\ref{sec:dyadicintervals}.

\begin{prop}\label{prop:dyadiccover}
    Let $K$ be any interval in~$\R$, dyadic or not. Then $K$ is
    contained in the union of two adjacent dyadic
    intervals $J_1$ and $J_2$ of equal length, i.e., $K\subset J_1
    \cup J_2$, with
    \[
        \frac{|J_1|}{2}
        = \frac{|J_2|}{2}
        < |K|
        \leq |J_1|
        = |J_2|.
    \]
\end{prop}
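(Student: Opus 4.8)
The plan is to fix the right scale for the two dyadic intervals, namely the scale at which generation-$m$ dyadic intervals are just long enough to cover~$K$, and then check that $K$ fits inside two consecutive such intervals. Concretely, let $m\in\Z$ be the unique integer with $2^{-m-1} < |K| \le 2^{-m}$, and consider the partition of $\R$ into the intervals of $\D_m$, each of length $2^{-m}$. I will show that $K$ meets at most two of these intervals and that any two it meets are adjacent.

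First I would record the basic geometric fact: $K$ cannot contain points lying in two non-adjacent intervals of $\D_m$. Indeed, if $x\in K$ lies in one interval of $\D_m$ and $x'\in K$ lies in another that is not adjacent to it, then $|x-x'| > 2^{-m} \ge |K|$, which is impossible since $x,x'\in K$ forces $|x-x'|\le|K|$. In particular $K$ cannot straddle three consecutive intervals of $\D_m$. Since $\D_m$ partitions $\R$ and $K$ is nonempty, the collection of intervals of $\D_m$ that meet $K$ is therefore either a single interval or a pair of adjacent intervals.

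If $K$ meets exactly one interval $J$ of $\D_m$, set $J_1 := J$ and let $J_2$ be its right neighbor in $\D_m$; then $K\subset J_1\subset J_1\cup J_2$. If $K$ meets exactly two intervals $J_1$ and $J_2$ of $\D_m$, these are adjacent, and since they are the only intervals of $\D_m$ meeting $K$ while $\D_m$ covers $\R$, we get $K\subset J_1\cup J_2$. In either case $J_1$ and $J_2$ are adjacent dyadic intervals with $|J_1| = |J_2| = 2^{-m}$, and the choice of $m$ gives $2^{-m-1} < |K| \le 2^{-m}$, which is exactly $|J_1|/2 = |J_2|/2 < |K| \le |J_1| = |J_2|$, as required.

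I do not anticipate any genuine difficulty; the statement is elementary. The only points needing a moment's attention are the geometric observation that $K$ cannot straddle three consecutive generation-$m$ dyadic cells, and the harmless bookkeeping at half-open endpoints, which does not affect the argument since it uses only that $\D_m$ partitions $\R$ and that points of $K$ lying in non-adjacent cells are more than $2^{-m}$ apart.
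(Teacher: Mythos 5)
Your proposal is correct and follows essentially the same route as the paper: both choose the dyadic scale so that the cell length lies in $[|K|,2|K|)$ and then observe that $K$ can meet at most two (necessarily adjacent) cells of that generation. The paper phrases this via the endpoints of $K$ (the left endpoint determines $J_1$, and the right endpoint must lie in $J_1$ or its right neighbour), while you phrase it via the partition of $\R$ by $\D_m$; the content is identical.
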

\begin{proof}
Let $N$ be the unique integer such that $2^{N-1} < |K| \leq
2^N$. Let $J_1$ be the dyadic interval of length $2^N$ that
contains the left (or right) endpoint of~$K$. If $K\subset J_1$, we are
done. If $K\not\subset J_1$, then the right (or left) endpoint of~$K$
lies in the dyadic interval $J_2$ of length~$2^N$ immediately
to the right of $J_1$.
\end{proof}

\begin{thm}\label{thm:intersectionoftranslatesBMOR}
    Suppose $\delta\in\R$ is far from dyadic
    rationals, in the sense of condition~\eqref{eqn:meicondR}.
    Then $\bmo = \bmo_d \cap \bmo_{\delta}$. Moreover, we
    have
    $$
    \max\{ \|f\|_{\bmo_d(\R)},\|f\|_{\bmo_\delta(\R)} \}
    \leq \|f\|_{\bmo(\R)}
    \leq (C\cdot C(\delta))^{1/2}
        \max\{ \|f\|_{\bmo_d(\R)},\|f\|_{\bmo_\delta(\R)} \},
    $$
    where $C$ depends only on $C_\psi$ in (\ref{eqn:CT condition}).
\end{thm}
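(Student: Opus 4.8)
The plan is to prove the theorem by expressing the continuous $\bmo$ norm via a continuous Carleson-measure (Littlewood--Paley) quantity, then comparing that quantity to the sum of the two dyadic Carleson sums attached to $\D$ and to $\Dd$, using Mei's Proposition~\ref{prop:Mei} to route every interval through a containing dyadic or $\delta$-dyadic interval of comparable length. The left-hand inequality $\max\{\|f\|_{\bmo_d},\|f\|_{\bmo_\delta}\}\le\|f\|_{\bmo}$ is immediate, since $\D\subset\{$all intervals$\}$ and likewise for $\Dd$, so that the averaging suprema defining the dyadic norms are taken over a subfamily of the intervals defining $\|f\|_{\bmo}$. All the work is in the right-hand inequality.

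First I would recall the Carleson characterization of continuous $\bmo(\R)$: with $\psi$ as in Proposition~\ref{prop:psiprops} satisfying the Calder\'on--Torchinsky condition~\eqref{eqn:CT condition}, one has that $f\in\bmo(\R)$ if and only if $d\mu_f := |f*\psi_y(t)|^2\,\frac{dt\,dy}{y}$ is a Carleson measure on the upper half-plane, with $\|f\|_{\bmo(\R)}^2 \approx \sup_{Q} \frac{1}{|Q|}\mu_f\big(T(Q)\big)$, where $T(Q)=Q\times(0,|Q|)$ is the Carleson box over $Q$; the constants here depend only on $C_\psi$. (This is the standard Fefferman--Stein square-function description of $\bmo$; it is exactly the tool flagged in the run-up to the theorem.) Next, fix an arbitrary interval $Q\subset\R$ and apply Proposition~\ref{prop:Mei} to get an interval $I\supset Q$ with $|I|\le C(\delta)|Q|$ and $I\in\D$ or $I\in\Dd$; then $T(Q)\subset T(I)$ (since $Q\subset I$ and $|Q|\le|I|$), so $\mu_f(T(Q))\le\mu_f(T(I))$, and it suffices to bound $\frac{1}{|I|}\mu_f(T(I))$ for dyadic and $\delta$-dyadic $I$ by $\max\{\|f\|_{\bmo_d}^2,\|f\|_{\bmo_\delta}^2\}$, up to the constant $C$.

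The heart of the argument is the identity (or one-sided estimate) connecting the continuous square-function mass over a dyadic box to the dyadic Carleson sum of Haar coefficients below that box. For $I\in\D$, expand $f$ (or rather $P_I f$ plus a part irrelevant on $T(I)$) in the Haar basis. Using parts (iv) and (v) of Proposition~\ref{prop:psiprops} — which say that $h_{I'}*\psi_y(t)=0$ whenever $I'\cap I_{t,y}=\emptyset$ and whenever $I_{t,y}$ lies in a single half of $I'$ — one sees that for $(t,y)\in T(I)$ only the Haar functions $h_{I'}$ with $I'\subset$ (a fixed dilate of) $I$, $I'\in\D$, contribute, and that $f*\psi_y(t)=\big(\sum_{I'\subset\widetilde I}(f,h_{I'})h_{I'}\big)*\psi_y(t)$ on $T(I)$ up to harmless terms. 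Then by the $L^2$ Littlewood--Paley estimate quoted just before Definition~\ref{def:AverageBMOd} (which has constant $C_\psi$), integrating $|f*\psi_y(t)|^2\,\frac{dt\,dy}{y}$ over $T(I)$ is controlled by $C_\psi\sum_{I'\in\D,\,I'\subset\widetilde I}(f,h_{I'})^2$, and by the dyadic Carleson characterization of $\bmo_d$ (Definition~\ref{def:CarlesonBMOd}, and its stated invariance under replacing $\widetilde I$ by an arbitrary interval via Proposition~\ref{prop:dyadiccover}) this is at most $C\,|I|\,\|f\|_{\bmo_d(\R)}^2$. The identical argument with the grid $\Dd$ and the $\delta$-dyadic Haar system handles $I\in\Dd$, giving $\mu_f(T(I))\le C|I|\,\|f\|_{\bmo_\delta(\R)}^2$. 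Combining, $\frac{1}{|Q|}\mu_f(T(Q))\le\frac{C(\delta)}{|I|}\mu_f(T(I))\le C\,C(\delta)\,\max\{\|f\|_{\bmo_d(\R)}^2,\|f\|_{\bmo_\delta(\R)}^2\}$; taking the supremum over $Q$ and a square root yields exactly the claimed bound $\|f\|_{\bmo(\R)}\le (C\cdot C(\delta))^{1/2}\max\{\|f\|_{\bmo_d(\R)},\|f\|_{\bmo_\delta(\R)}\}$, with $C$ depending only on $C_\psi$; and set equality $\bmo=\bmo_d\cap\bmo_\delta$ follows from the two-sided norm comparison.

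The main obstacle, and the step I would write out most carefully, is the localization: showing that on $T(I)$ the convolution $f*\psi_y(t)$ depends (up to controllable error) only on the Haar coefficients $(f,h_{I'})$ for $I'$ a dyadic subinterval of a bounded dilate of $I$, and quantifying the "bounded dilate." This is where parts (i)--(v) of Proposition~\ref{prop:psiprops} do their work — (i) controls the vertical truncation, (iv) kills Haar functions disjoint from $I_{t,y}$, (v) kills the ones in which $I_{t,y}$ sits strictly inside one child — but one must be slightly careful about Haar functions $I'$ that strictly contain $I$ yet still meet $I_{t,y}$: for those, $\psi_y$ of width $\le|I|$ sits inside a single half of $I'$ once $I'$ is large enough relative to $\mathrm{dist}(I_{t,y},\partial(\text{child of }I'))$, so (v) applies, and only finitely many (in fact at most a bounded number, independent of $I$) ancestors of $I$ can fail this, each contributing a term bounded by the $\bmo_d$ norm. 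Wrapping that bounded error into the constant $C$ (still dependent only on $C_\psi$) is the technical crux; everything else is the standard Carleson-measure/Littlewood--Paley machinery applied twice, once per grid.
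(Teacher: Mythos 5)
There is a genuine gap, and it sits exactly at the step you flag as the ``technical crux.'' Your plan applies Proposition~\ref{prop:Mei} once, to the base interval $Q$, obtains a containing interval $I\in\D$ (say), and then tries to prove the single-grid estimate $\int\!\!\int_{T(I)}|f*\psi_y(t)|^2\,\frac{dt\,dy}{y}\le C|I|\,\|f\|_{\bmo_d(\R)}^2$ for every dyadic $I$, using only the $\D$-Haar expansion. That estimate is false. If it held for all $I\in\D$, then covering an arbitrary interval $K$ by two adjacent dyadic intervals of comparable length (Proposition~\ref{prop:dyadiccover}) and using $T(K)\subset T(J_1)\cup T(J_2)$ would give $\mu_f(T(K))\le C|K|\,\|f\|_{\bmo_d(\R)}^2$ for every $K$, i.e.\ $\bmo_d\subset\bmo$ with no reference to the second grid at all --- contradicting the strict inclusion $\bmo\subsetneq\bmo_d$ (and making the theorem vacuous). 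The concrete failure is in your localization claim that only ``a bounded number of ancestors'' of $I$ survive: take $I=[0,1)$ and $(t,y)\in T(I)$ with $t$ near $0$ and $y$ near $1$, so that $I_{t,y}=[t-y,t+y]$ straddles the point $0$. Then for \emph{every} $k\ge 1$ the ancestor $I'=[0,2^k)$ meets $I_{t,y}$, yet $I_{t,y}\not\subset$ either child of $I'$ (it sticks out to the left of $0$), so Proposition~\ref{prop:psiprops}(v) does not apply and $h_{I'}*\psi_y(t)=c\int_0^{t+y}\psi_y(t-s)\,ds\neq 0$ in general; likewise for every $[-2^k,0)$. Infinitely many large-scale terms survive, each of size comparable to $\|f\|_{\bmo_d}$, and their sum is not controlled by the dyadic norm alone.

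The repair is the paper's actual mechanism, and it is where the second grid enters \emph{inside} a single tent. One applies Proposition~\ref{prop:Mei} not to $Q$ but to each small interval $I_{t,y}$, $(t,y)\in T(I_0)$, obtaining $I_{t,y}^*\supset I_{t,y}$ with $|I_{t,y}^*|\le C(\delta)|I_{t,y}|$ lying in $\D$ or in $\Dd$, and partitions $T(I_0)=\F_1\cup\F_2$ accordingly. On $\F_1$ one uses the $\D$-Haar expansion: any $I'\in\D$ with $|I'|\ge 2|I_{t,y}^*|$ either misses $I_{t,y}^*$ (hence misses $I_{t,y}$, so (iv) kills it) or properly contains $I_{t,y}^*$; in the latter case $I_{t,y}^*$, being a dyadic interval properly contained in $I'$, lies in one child of $I'$, hence so does $I_{t,y}$, and (v) kills it. This is what makes the large-scale tail vanish identically --- it is the dyadicity of the \emph{intermediate} interval $I_{t,y}^*$ relative to $I'$ that does the work, not the width of $\psi_y$. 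On $\F_2$ one runs the same argument in the $\Dd$-Haar system, which is why the final bound involves $\max\{\|f\|_{\bmo_d},\|f\|_{\bmo_\delta}\}$ rather than a single dyadic norm. Your outer reduction (the left-hand inequality, the Carleson characterization with constant depending only on $C_\psi$, and the passage from $T(Q)$ to a tent of comparable size) is fine; it is the per-tent, single-grid localization that must be replaced by this pointwise-in-$(t,y)$ dichotomy.
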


\begin{proof}
The inclusion $\bmo \subset \bmo_d \cap \bmo_{\delta}$ is an
immediate consequence of the definition
\begin{eqnarray}\label{def:BMO average}
    \bmo := \{f\in L^1_{\textup{loc}} :
    \|f\|_* := \sup_Q \intq |f(x) - f_Q| \, dx < \infty\}
\end{eqnarray}
in terms of averages $f_Q := (1/|Q|)\int_Q f$, since for
$\bmo_d$ and $\bmo_{\delta}$ we are taking the supremum over
fewer intervals than for~$\bmo$. Moreover, we have $\max\{
\|f\|_{\bmo_d(\R)},\|f\|_{\bmo_\delta(\R)} \}\leq
\|f\|_{\bmo(\R)}$.

Now we prove the other inclusion. Our proof, which replies on
the Carleson-measure characterization of \bmo, is more
complicated than the original proof in \cite{Mei}. We choose to
give this proof because it readily generalizes to the
multiparameter case (see Theorem
\ref{thm:intersectionoftranslatesBMORn}). Suppose $f$ belongs
to both $\bmo_d(\R)$ and~$\bmo_{\delta}(\R)$. Choose a $\psi$
as in Proposition \ref{prop:psiprops} and satisfying
(\ref{eqn:CT condition}). We must show that there is a positive
constant $C$ independent of $I_0$ such that the inequality
\[
    \int\!\!\!\int_{T(I_0)} \, |f * \psi_y(t)|^2 \,
        \frac{dt \, dy}{y}
    \leq C|I_0|
\]
holds for all intervals~$I_0$, where the constant $C$ is
comparable to $\|f\|_{\bmo(\R)}^2$.

Fix an interval $I_0\subset\R$. For each point~$(t,y)$
in~$T(I_0)$, let $I_{t,y} := (t - y, t + y)$ be the interval of
length $2y$ centered at~$t$. By Proposition~\ref{prop:Mei}, for
each point $(t,y) \in T(I_0)$ we may choose an interval
$I_{t,y}^*$ such that $I_{t,y} \subset I_{t,y}^*$, $|I_{t,y}^*|
\leq C(\delta) |I_{t,y}|$, and either $I_{t,y}^*\in\D$ or
$I_{t,y}^*\in\Dd$. Let
\[
    \F_1
    := \{(t,y) \in T(I_0) \mid I_{t,y}^*\in\D\},
    \qquad
    \F_2
    := \{(t,y) \in T(I_0) \mid I_{t,y}^*\in\Dd\}.
\]
So $T(I_0) = \F_1 \cup \F_2$, and $\F_1 \cap \F_2 = \emptyset$.
% Note: since we have _chosen_ an interval $I_{t,y}^*$ for each
% given $(t,y)$, either $I_{t,y}^*\in\D$ or $I_{t,y}^*\in\D^\delta$,
% but not both!
Now we have
\[
    \int\!\!\!\int_{(t,y) \in T(I_0)} \,
        |f * \psi_y(t)|^2 \, \frac{dt \, dy}{y}
    = \underbrace{\int\!\!\!\int_{(t,y) \in \F_1} \,
        |f * \psi_y(t)|^2 \, \frac{dt \, dy}{y}}_{(G_1)}
    + \underbrace{\int\!\!\!\int_{(t,y) \in \F_2} \,
        |f * \psi_y(t)|^2 \, \frac{dt \, dy}{y}}_{(G_2)}.
\]
%
%\begin{lem}
%    Suppose that $\delta\in\R$ satisfies
%    condition~\eqref{eqn:meicondR}, and $f\in\bmo_d$. Fix an
%    interval~$I_0$ in~$\R$. Suppose there is a constant $C(\delta)$
%    independent of $I_0$ such that there is an interval
%    $J\in\D$ satisfying $|J| \leq C(\delta) |I_0|$ and
%    $I_{t,y} \subset J$ for all points $(t,y)$ in~$\F_1$. Then
%    \[
%        \int\!\!\!\int_{\F_1} \, |f * \psi_y(t)|^2 \, \frac{dt \, dy}{y}
%        \leq C'|I_0|,
%    \]
%    where $C'$ depends only on $\delta$ and on $\Vert f \Vert_d$.
%\end{lem}
%
%\begin{proof}
%
%
It suffices to control the term $(G_1)$ since the estimate for
the term $(G_2)$ is similar. Replacing $f$ by its Haar
expansion, we see that
\begin{equation}\label{eqn:fstarpsi}
    f * \psi_y(t)
    = \sum_{I\in\D} (f,h_I) h_I * \psi_y(t)
    = \sum_{I\in\D, I\cap I_{t,y} \neq \emptyset}
        (f,h_I) h_I * \psi_y(t),
\end{equation}
since by Proposition~\ref{prop:psiprops}(iv), $h_I
* \psi_y(t)$ can only be nonzero if $I\cap I_{t,y}
\neq\emptyset$.

For each $(t,y)\in\mathcal{F}_1$, we have $I_{t,y}\subset
3I_0$, by Proposition~\ref{prop:psiprops}(i).

Fix $(t,y)\in\F_1$. We split the sum in
equation~\eqref{eqn:fstarpsi} at the scale of $2^N|3I_0|$,
where $N > 0$ is a constant to be determined later but
independent of $f$, $t$, $y$ and~$I_0$. Let $k_0$ be the unique
integer such that
$$
   2^{-k_0} \leq |3I_0| < 2^{-k_0 + 1}.
$$
Now,
\begin{eqnarray*}
    f * \psi_y(t)
    & = & \underbrace{\sum_{k = k_0-N-1}^\infty
            \sum_{I\in\D_k, I\cap I_{t,y} \neq \emptyset}
            (f,h_I) h_I * \psi_y(t)}_{g_{11}}
        + \underbrace{\sum_{k = -\infty}^{k_0 - N - 2}
            \sum_{I\in\D_k, I\cap I_{t,y} \neq \emptyset}
            (f,h_I) h_I * \psi_y(t)}_{g_{12}}.
\end{eqnarray*}

\noindent\textbf{For the sum  $g_{12}$:} we first show that
each term in the sum $g_{12}$ over large intervals is zero, if
$N$ is chosen appropriately. Let $N$ be the unique integer such
that
\begin{equation}\label{eqn:NCdelta}
    2^N \leq 2C(\delta) < 2^{N + 1}.
\end{equation}
(Note that $N \geq 2$, since $d(\delta) < 1$ and so $2C(\delta)
= 4/d(\delta) > 2^2$.) We will use the right-hand inequality
for our estimate of $g_{12}$, and the left-hand inequality for
$g_{11}$.

If the interval $I$ appears in the sum $g_{12}$, we have
\[
    |I|
    \geq 2^{-k_0 + N + 2}
    > 2^{N + 1} |3I_0|
    > 2C(\delta) |I_{t,y}|
    \geq 2|I_{t,y}^*|.
\]
Since the intervals $I$ and $I_{t,y}^*$ both belong to the same
dyadic grid~$\D$ and $|I| > |I_{t,y}^*|$, it follows that
either $I$ and $I_{t,y}^*$ are disjoint or $I\supsetneqq
I_{t,y}^*$. If the former, then $h_I * \psi_y(t) = 0$. If the
latter, then since $I_{t,y}\subset I_{t,y}^*\subsetneqq I$ we
see that $I_{t,y}$ is contained in either the left half of~$I$
or the right half of $I$, and so by
Proposition~\ref{prop:psiprops}(v), $h_I
* \psi_y(t) = 0$. Thus the sum $g_{12}$ is zero.

\noindent\textbf{For the sum  $g_{11}$:}
For each interval $I$ that appears in~$g_{11}$, we have $|I|
\leq 2^{-k_0 + N + 1} \leq 2^{N+1}|3I_0|$ and $I\cap 3I_0 \neq
\emptyset$. It follows that each such interval~$I$ is contained
in the interval $2^{N + 1}9I_0$ that has the same midpoint as
$I_0$ and length $2^{N + 1}|9I_0|$. For brevity, let
\[
    J_0 := 2^{N + 1}9I_0.
\]
We reiterate that if $I\cap I_{t,y} \neq \emptyset$ then
$I\subset J_0$.

Then
\begin{eqnarray*}
    g_{11}
    & := & \sum_{k = k_0-N-1}^\infty
        \sum_{I\in\D_k, I\cap I_{t,y} \neq \emptyset}
        (f,h_I) h_I * \psi_y(t)\\
    & = & \sum_{k = k_0 - N - 1}^\infty \sum_{I\in\D_k, I\subset J_0}
        (f,h_I) h_I * \psi_y(t)\\
    & = & \sum_{I\in\D, I\subset J_0} (f,h_I) h_I * \psi_y(t).
\end{eqnarray*}
The third equality holds because if $I\subset J_0$, $I\in\D_k$
and $k<k_0-N-1$, then $h_I*\psi_y(t)=0$ by Proposition
\ref{prop:psiprops} and the argument for $g_{12}$ above.

\m
As a consequence, and applying the Carleson
condition~\eqref{eqn:CarlesonBMOd} for $f\in\bmo_d(\R)$ and the
interval~$J_0$, we see that
\begin{align*}
    (G_1)
    &= \int\!\!\!\int_{\F_1} \, |g_{11}|^2 \,
        \frac{dt \, dy}{y}
    \leq \int\!\!\!\int_{\F_1} \, \bigg|\sum_{I\in\D, I\subset J_0}
        (f,h_I) h_I * \psi_y(t)\bigg|^2 \, \frac{dt \, dy}{y} \\
    &= \int\!\!\!\int_{\F_1} \, |
        P_{J_0} f * \psi_y(t)|^2 \, \frac{dt \, dy}{y}
    \leq \int\!\!\!\int_{(t,y)\in\R\otimes\R_+} \,
        |P_{J_0} f * \psi_y(t)|^2 \, \frac{dt \, dy}{y} \\
    &\leq C\Vert P_{J_0} f\Vert_{L^2(\R)}^2 \\
    &= C\sum_{I\in\D} (P_{J_0} f,h_I)^2 \\
    &= C\sum_{I\in\D, I\subset J_0} (f,h_I)^2 \\
    &\leq C |J_0| \, \Vert f\Vert_{\bmo_d(\R)}^2 \\
    &\leq C 2^{N+1}|I_0| \, \Vert f\Vert_{\bmo_d(\R)}^2 \\
    &\leq C\cdot C(\delta) |I_0| \, \Vert f\Vert_{\bmo_d(\R)}^2,
\end{align*}
where $C$ depends only on $C_\psi$ in (\ref{eqn:CT condition}).

It follows that
\[
   (G_1)
   \leq C\cdot C(\delta) |I_0| \, \Vert f\Vert_{\bmo_d(\R)}^2.
\]

In the same way, we obtain the analogous estimate for $(G_2)$, with
$\Vert f\Vert_{BMO_d(\R)}$ replaced by $\Vert
f\Vert_{BMO_{\delta}(\R)}$:
\[
   (G_2)
   \leq C\cdot C(\delta) |I_0| \, \Vert f\Vert_{\bmo_{\delta}(\R)}^2.
\]
Therefore
\begin{equation}\label{eqn:BMOlastinequality}
    \Vert f\Vert_{\bmo(\R)}
    \leq (C\cdot C(\delta))^{1/2}
        \max\{\Vert f\Vert_{\bmo_{d}(\R)},\Vert f\Vert_{\bmo_{\delta}(\R)}\},
\end{equation}
as required.
\end{proof}

%----------------------------------------------------------

%\section{Results for product $BMO$}
%\label{sec:resultsproductBMO}

\bigskip\bigskip

We now turn to the product case. For simplicity we discuss the case of two parameters.

A locally integrable function~$f$ on $\R\otimes\R$ belongs to the product BMO space
$\bmo(\R\otimes\R)$ if there exists a positive constant
$C$ such that for every open set $\Omega\subset\R\otimes\R$ with
finite measure, the following inequality holds:
\begin{eqnarray}\label{eqn:product BMO}
    \int\!\!\!\int_{T(\Omega)} \, |f * \psi_{y_1}\psi_{y_2}(t_1,t_2)|^2 \,
        \frac{dt_1 \, dy_1\, dt_2 \, dy_2}{y_1y_2}
    \leq C|\Omega|.
\end{eqnarray}
Here $T(\Omega) := \left\{(t_1,y_1,t_2,y_2) \mid
I_{t_1,y_1}\times I_{t_2,y_2}\subset\Omega \right\}$ is the
Carleson tent on~$\Omega$. Also
$\psi_{y_1}\psi_{y_2}(t_1,t_2)=y_1^{-1}y_2^{-1} \psi(t_1/y_1)
\psi(t_2/y_2)$, where $\psi$ is a function of the kind
described above in the one-parameter case.  The smallest such
$C$ is comparable to  $\Vert f \Vert_{\bmo(\R\otimes\R)}^2$.

%The analogous characterization holds on $\T\otimes\T$.

Next we mention the following four types of dyadic product BMO
spaces $\bmo_{d,d}$, $\bmo_{d,\delta}$, $\bmo_{\delta,d}$ and
$\bmo_{\delta,\delta}$. They differ only in which of the dyadic
grids $\D$ and $\Dd$ is used in each variable. First, a locally
integrable function~$f$ on $\R\otimes\R$ belongs to
$\bmo_{d,d}(\R\otimes\R)$ if and only if there exists a
positive constant $C$ such that for each open set
$\Omega\subset\R\otimes\R$ with finite measure, the following
inequality
\begin{eqnarray}\label{eqn:product BMOdd}
    \sum_{R=I\times J\in \D\times\D, \ R\subset\Omega}(f,h_R)^2
    \leq C|\Omega|
\end{eqnarray}
holds, where $h_R = h_I\times h_J$, and $h_I$ and $h_J$ are the Haar
functions on the intervals $I\in\D$ and $J\in\D$, respectively.

Next, a locally integrable function~$f$
on $\R\otimes\R$ belongs to $\bmo_{d,\delta}(\R\otimes\R)$ if and
only if there exists a positive constant $C$ such that for each open
set $\Omega\subset\R\otimes\R$ with finite measure, the following
inequality
\begin{eqnarray}\label{eqn:product BMOd delta}
    \sum_{R=I\times J\in \D\times\D^\delta, \ R\subset\Omega}(f,h_R)^2
    \leq C|\Omega|
\end{eqnarray}
holds, where $h_R=h_I\times h_J$, $h_I$ and $h_J$ are the Haar
functions on the intervals $I\in\D$ and $J\in\Dd$, respectively.

We define $\bmo_{\delta,d}(\R\otimes\R)$ and
$\bmo_{\delta,\delta}(\R\otimes\R)$ similarly.

For simplicity we state and prove our multiparameter result for
two parameters. However the statement and proof go through for
arbitrarily many parameters.

\begin{thm}\label{thm:intersectionoftranslatesBMORn}
    Suppose $\delta\in\R$ is far from dyadic rationals: $\delta$
    satisfies condition~\eqref{eqn:meicondR}. Then
    \[
        \bmo(\R\otimes\R)
        = \bmo_{d,d}(\R\otimes\R) \cap
            \bmo_{d,\delta}(\R\otimes\R) \cap
            \bmo_{\delta,d}(\R\otimes\R) \cap
            \bmo_{\delta,\delta}(\R\otimes\R).
    \]
    Bounds for the constants are given in the proof below.

    In the case of $k$~parameters, the analogous result holds
    using the intersection of $2^k$ translates of the dyadic
    classes.
\end{thm}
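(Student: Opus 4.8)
\emph{Proof proposal.} The plan is to reproduce the proof of Theorem~\ref{thm:intersectionoftranslatesBMOR}, applying Mei's Proposition~\ref{prop:Mei} \emph{separately in each of the two variables} and working throughout with the Carleson-measure characterizations \eqref{eqn:product BMO}--\eqref{eqn:product BMOd delta}. The inclusion of $\bmo(\R\otimes\R)$ in the intersection of the four dyadic product $\bmo$ spaces is the easy direction. Unlike in the one-parameter case it is not merely ``a supremum over fewer sets'', since product $\bmo$ has no mean-oscillation description; I would instead deduce it from the standard comparison between the continuous biparameter Littlewood--Paley square function and the biparameter Haar square function (equivalently, from the bounded inclusion of dyadic product $H^1$ into continuous product $H^1$ together with duality). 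This step does not use that $\delta$ is far from the dyadic rationals, and it gives $\max\{\|f\|_{\bmo_{d,d}},\|f\|_{\bmo_{d,\delta}},\|f\|_{\bmo_{\delta,d}},\|f\|_{\bmo_{\delta,\delta}}\}\le C\|f\|_{\bmo(\R\otimes\R)}$ with an absolute constant~$C$.

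For the reverse inclusion, suppose $f$ lies in all four dyadic product $\bmo$ spaces and fix an open set $\Omega\subset\R\otimes\R$ of finite measure; choosing $\psi$ as in Proposition~\ref{prop:psiprops} and satisfying \eqref{eqn:CT condition}, I must bound $\int\!\!\!\int_{T(\Omega)}|f*\psi_{y_1}\psi_{y_2}(t_1,t_2)|^2\,\frac{dt_1\,dy_1\,dt_2\,dy_2}{y_1y_2}$ by $C(\delta)|\Omega|$ times the square of the largest dyadic norm. For each $(t_1,y_1,t_2,y_2)\in T(\Omega)$, apply Proposition~\ref{prop:Mei} to $I_{t_1,y_1}$ to get $I^*_{t_1,y_1}\supseteq I_{t_1,y_1}$ with $|I^*_{t_1,y_1}|\le C(\delta)|I_{t_1,y_1}|$ and $I^*_{t_1,y_1}\in\D$ or $\in\Dd$, and similarly get $I^*_{t_2,y_2}$ for $I_{t_2,y_2}$. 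This partitions $T(\Omega)$ into four pieces according to the two grids to which $I^*_{t_1,y_1}$ and $I^*_{t_2,y_2}$ belong; it suffices to estimate the integral over the piece $\F_{\D,\D}$ on which both starred intervals lie in $\D$, using $f\in\bmo_{d,d}(\R\otimes\R)$, the other three pieces being handled identically with the matching dyadic space.

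On $\F_{\D,\D}$, expand $f=\sum_{R=I\times J\in\D\times\D}(f,h_R)h_R$, so that $h_R*\psi_{y_1}\psi_{y_2}(t_1,t_2)=\bigl(h_I*\psi_{y_1}(t_1)\bigr)\bigl(h_J*\psi_{y_2}(t_2)\bigr)$. By Proposition~\ref{prop:psiprops}(iv) this vanishes unless $I\cap I_{t_1,y_1}\neq\emptyset$ and $J\cap I_{t_2,y_2}\neq\emptyset$; and, exactly as in the treatment of $g_{12}$ in the one-parameter proof --- here using that, \emph{on this piece}, $I$ and $I^*_{t_1,y_1}$ lie in the common filtration $\D$ --- if $|I|\ge 2|I^*_{t_1,y_1}|$ then $I$ either misses $I_{t_1,y_1}$ or contains $I^*_{t_1,y_1}$ (hence $I_{t_1,y_1}$) in one of its halves, so $h_I*\psi_{y_1}(t_1)=0$ by Proposition~\ref{prop:psiprops}(iv)--(v); similarly in the second variable. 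Hence a surviving rectangle $R=I\times J$ satisfies $|I|<2|I^*_{t_1,y_1}|\le 2C(\delta)|I_{t_1,y_1}|$ and $I\cap I_{t_1,y_1}\ne\emptyset$, so $I\subset\lambda I_{t_1,y_1}$ for a dilation factor $\lambda=\lambda(\delta)$, and likewise $J\subset\lambda I_{t_2,y_2}$. Since $I_{t_1,y_1}\times I_{t_2,y_2}\subset\Omega$, every point of $R$ then lies in $\widetilde\Omega:=\{M_s\chi_\Omega>c\}$ for a suitable $c=c(\delta)>0$, where $M_s$ is the strong maximal function; $\widetilde\Omega$ is open, and by the fixed-level $L\log L$ bound for $M_s$ one has $|\widetilde\Omega|\le C(\delta)|\Omega|<\infty$. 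We have shown that $f*\psi_{y_1}\psi_{y_2}(t_1,t_2)=(P_{\widetilde\Omega}f)*\psi_{y_1}\psi_{y_2}(t_1,t_2)$ on $\F_{\D,\D}$, where $P_{\widetilde\Omega}f:=\sum_{R\in\D\times\D,\,R\subset\widetilde\Omega}(f,h_R)h_R$; so by the biparameter $L^2$ Littlewood--Paley estimate (the one-parameter estimate applied in each variable) and the orthonormality of $\{h_R\}$,
\begin{align*}
   \int\!\!\!\int_{\F_{\D,\D}}|f*\psi_{y_1}\psi_{y_2}(t_1,t_2)|^2\,\frac{dt_1\,dy_1\,dt_2\,dy_2}{y_1y_2}
   &\le C\,\|P_{\widetilde\Omega}f\|_{L^2(\R\otimes\R)}^2
   = C\sum_{R\in\D\times\D,\,R\subset\widetilde\Omega}(f,h_R)^2 \\
   &\le C\,|\widetilde\Omega|\,\|f\|_{\bmo_{d,d}(\R\otimes\R)}^2
   \le C(\delta)\,|\Omega|\,\|f\|_{\bmo_{d,d}(\R\otimes\R)}^2,
\end{align*}
with $C$ depending only on $C_\psi$ in \eqref{eqn:CT condition}. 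Summing the four such estimates gives $\|f\|_{\bmo(\R\otimes\R)}\le C(\delta)\max\{\|f\|_{\bmo_{d,d}},\|f\|_{\bmo_{d,\delta}},\|f\|_{\bmo_{\delta,d}},\|f\|_{\bmo_{\delta,\delta}}\}$, which with the easy direction proves the theorem. For $k$ parameters I would argue identically, partitioning $T(\Omega)$ into $2^k$ pieces according to the grid selected in each of the $k$ variables and enlarging $\Omega$ by the $k$-parameter strong maximal function.

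The step I expect to be the main obstacle is replacing the single dilated interval $J_0=2^{N+1}9I_0$ of the one-parameter proof: in the product setting there is no ambient interval to dilate, so the surviving Haar rectangles must instead be trapped inside an \emph{open} set $\widetilde\Omega$ of measure comparable to $|\Omega|$. The strong maximal function provides $\widetilde\Omega$, but this relies on its $L\log L$-type (not weak-$(1,1)$) bound at a fixed level, and one must check carefully that partitioning $T(\Omega)$ according to the grid of each $I^*_{t_i,y_i}$ really does force each surviving Haar interval into the \emph{same} dyadic filtration as the corresponding $I^*_{t_i,y_i}$, so that Proposition~\ref{prop:psiprops}(v) can be applied coordinate by coordinate. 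The easy direction is routine but, as noted, is not the triviality it was in the one-parameter setting.
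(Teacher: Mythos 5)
Your proposal is correct and follows the same overall route as the paper's proof: the Carleson-measure characterization of product $\bmo$, Mei's Proposition~\ref{prop:Mei} applied separately in each variable, the fourfold partition of $T(\Omega)$ according to which grids the starred intervals belong to, the trapping of the surviving Haar rectangles in an enlarged open set, and then the $L^2$ Littlewood--Paley estimate combined with the dyadic Carleson condition on that enlarged set. Your handling of the easy inclusion (bounded inclusion of dyadic product $H^1$ into continuous product $H^1$, then duality) is also exactly what the paper does, citing Treil. The one place you genuinely diverge is the construction of the enlarged open set. The paper shows that every surviving Haar interval $I_1$ is contained in the dyadic \emph{parent} $\widetilde{I}^*_{t_1,y_1}$ of the starred interval, takes $\widetilde{\Omega}_1:=\bigcup_{\F_1}\widetilde{I}^*_{t_1,y_1}\times\widetilde{I}^*_{t_2,y_2}$, and estimates $|\widetilde{\Omega}_1|\le 4C(\delta)^2|\Omega|$ by a direct dilation count; you instead trap the surviving rectangles in $\{M_s\chi_\Omega>c(\delta)\}$ and invoke the boundedness of the strong maximal function. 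Both devices work, and yours is the standard Chang--Fefferman enlargement; indeed the paper's measure estimate (bounding the measure of a union of dilated rectangles by a constant times the measure of the union of the originals) is itself most cleanly justified by precisely the maximal-function argument you give, so your version arguably makes that step more transparent. One small correction: you do not need the $L\log L$ endpoint for $M_s$; the $L^p$ bound for any $p>1$ already yields $|\{M_s\chi_\Omega>c\}|\le C_p\,c^{-p}|\Omega|$, which is all that is required.
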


\begin{proof}
We first note that $\bmo(\R\otimes\R) \subset
\bmo_{d,d}(\R\otimes\R) \cap \bmo_{d,\delta}(\R\otimes\R)\cap
\bmo_{\delta,d}(\R\otimes\R)\cap
\bmo_{\delta,\delta}(\R\otimes\R)$. This inclusion is not
trivial in the multiparameter setting. A proof (for biparameter
$\bmo$) was given in the Ph.D. thesis \cite{P} of J.~Pipher,
but the best proof of this result is in S.~Treil's paper
\cite{T}. There he shows that  $H^1(\R\otimes\R) \supset
H^1_{d,d}(\R\otimes\R)$ via the characterization of these $H^1$
spaces in terms of the square function and the fact that the
multiparameter square function acts iteratively when viewed as
a vector-valued operator. Using the fact that the dual of
$H^1(\R\otimes\R)$ is $\bmo(\R\otimes\R)$, by~\cite{CF}, and
likewise the dual of $H^1_{d,d}(\R\otimes\R)$ is
$\bmo_{d,d}(\R\otimes\R)$, by~\cite{Be}, it follows that
$\bmo(\R\otimes\R) \subset \bmo_{d,d}(\R\otimes\R)$.

The same argument shows that $\bmo(\R\otimes\R)$ is contained
in each of $\bmo_{d,\delta}(\R\otimes\R)$,
$\bmo_{\delta,d}(\R\otimes\R)$, and
$\bmo_{\delta,\delta}(\R\otimes\R)$.

Now we prove the other inclusion. Suppose $f\in \bmo_{d,d}(\R\otimes\R)
\cap \bmo_{d,\delta}(\R\otimes\R)\cap
\bmo_{\delta,d}(\R\otimes\R)\cap \bmo_{\delta,\delta}(\R\otimes\R)$.
We will show that $f\in \bmo(\R\otimes\R)$.

We must show that there is a positive constant $C$ such that
the inequality
\begin{eqnarray}\label{eqn:product BMO dyadic to BMO}
    \int\!\!\!\int_{T(\Omega)} \, |f * \psi_{y_1}\psi_{y_2}(t_1,t_2)|^2 \,
        \frac{dt_1 \, dy_1 \, dt_2 \, dy_2}{y_1y_2}
    \leq C|\Omega|
\end{eqnarray}
holds for all open sets~$\Omega$ with finite measure.

Fix such a set $\Omega\subset\R\otimes\R$. For each
point~$(t_1,y_1,t_2,y_2)$ in~$T(\Omega)$, by definition the two
intervals $I_{t_1,y_1}$ and $I_{t_2,y_2}$ satisfy
$I_{t_1,y_2}\times I_{t_2,y_2}\subset\Omega$. By
Proposition~\ref{prop:Mei}, for such $(t_1,y_1)$, we may choose
an interval $I_{t_1,y_1}^*$ such that $I_{t_1,y_1} \subset
I_{t_1,y_1}^*$, $|I_{t_1,y_1}^*| \leq C(\delta) |I_{t_1,y_1}|$,
and either $I_{t_1,y_1}^*\in\D$ or $I_{t_1,y_1}^*\in\Dd$.
Similarly, for such $(t_2,y_2)$, we may choose an interval
$I_{t_2,y_2}^*$ such that $I_{t_2,y_2} \subset I_{t_2,y_2}^*$,
$|I_{t_2,y_2}^*| \leq C(\delta) |I_{t_2,y_2}|$, and either
$I_{t_2,y_2}^*\in\D$ or $I_{t_2,y_2}^*\in\Dd$. Now, we let
\begin{eqnarray*}
    \F_1
    &:=& \{(t_1,y_1,t_2,y_2) \in T(\Omega) \mid I_{t_1,y_1}^*\in\D,\ \
    I_{t_2,y_2}^*\in\D\};\\
    \F_2
    &:=& \{(t_1,y_1,t_2,y_2) \in T(\Omega) \mid I_{t_1,y_1}^*\in\D,\ \
    I_{t_2,y_2}^*\in\Dd\};\\
    \F_3
    &:=& \{(t_1,y_1,t_2,y_2) \in T(\Omega) \mid I_{t_1,y_1}^*\in\Dd,\ \
    I_{t_2,y_2}^*\in\D\};\\
    \F_4
    &:=& \{(t_1,y_1,t_2,y_2) \in T(\Omega) \mid I_{t_1,y_1}^*\in\Dd,\ \
    I_{t_2,y_2}^*\in\Dd\}.
\end{eqnarray*}
Then $T(\Omega) = \F_1 \cup \F_2\cup \F_3\cup \F_4$, and the sets
$\F_i$ $(i = 1, 2, 3, 4)$ are pairwise disjoint. As a consequence,
\begin{eqnarray*}
    &&\int\!\!\!\int_{T(\Omega)} \, |f * \psi_{y_1}\psi_{y_2}(t_1,t_2)|^2 \,
        \frac{dt_1 \, dy_1 \, dt_2 \, dy_2}{y_1y_2}\\
    &&= \sum_{i=1}^4\int\!\!\!\int_{\F_i} \, |f * \psi_{y_1}\psi_{y_2}(t_1,t_2)|^2 \,
        \frac{dt_1 \, dy_1 \, dt_2 \, dy_2}{y_1y_2}\\
    &&=:(G_1)+(G_2)+(G_3)+(G_4).
\end{eqnarray*}

We first estimate $(G_1)$. For every $(t_1,y_1,t_2,y_2)$ in~$\F_1$,
we have $I_{t_1,y_1}^*\in \D$ and $I_{t_2,y_2}^*\in \D$. Let
$\widetilde{I}_{t_1,y_1}^*$ and $\widetilde{I}_{t_2,y_2}^*$ be
the parents of $I_{t_1,y_1}^*$ and $I_{t_2,y_2}^*$,
respectively. Define
\[
  \widetilde{\Omega}_1
  := \bigcup_{(t_1,y_1,t_2,y_2)\in\F_1}
    \widetilde{I}_{t_1,y_1}^*\times\widetilde{I}_{t_2,y_2}^*.
\]
Then, we can easily see that
\begin{eqnarray*}
    |\widetilde{\Omega}_1|&=&\Big|\bigcup_{(t_1,y_1,t_2,y_2)\in\F_1}
  \widetilde{I}_{t_1,y_1}^*\times\widetilde{I}_{t_2,y_2}^*\Big|=
  2^2 \Big|\bigcup_{(t_1,y_1,t_2,y_2)\in\F_1}
  I_{t_1,y_1}^*\times I_{t_2,y_2}^*\Big|\\
  &\leq& 2^2C(\delta)^2 \Big|\bigcup_{(t_1,y_1,t_2,y_2)\in\F_1}
  I_{t_1,y_1}\times I_{t_2,y_2}\Big|\leq 2^2C(\delta)^2 |\Omega|.
\end{eqnarray*}
Next, using the biparameter Haar expansion, we have
\[
  f * \psi_{y_1}\psi_{y_2}(t_1,t_2)= \sum_{R=I_1\times I_2\in\D\times\D}
  (f,h_R)h_{I_1}*\psi_{y_1}(t_1) h_{I_2}*\psi_{y_2}(t_2)
\]
for every $(t_1,y_1,t_2,y_2)\in\F_1$. We now claim that: If
$I_1\nsubseteq \widetilde{I}_{t_1,y_1}^*$, then
$h_{I_1}*\psi_{y_1}(t_1)=0$.

In fact, this claim follows from the analogous estimates in the
one-parameter case; see the estimates of $g_{12}$ in the proof of
Theorem \ref{thm:intersectionoftranslatesBMOR}. More precisely, we
explain it as follows.  First, from the properties of $h_{I_1}$ and
$\psi_{y_1}$, we see that if $I_1\cap
\widetilde{I}_{t_1,y_1}^*=\emptyset$, then
$h_{I_1}*\psi_{y_1}(t_1)=0$. Moreover, if $I_1\cap
\widetilde{I}_{t_1,y_1}^*\not=\emptyset$ and $I_1\nsubseteq
\widetilde{I}_{t_1,y_1}^*$, then $I_1$ must be larger than
$\widetilde{I}_{t_1,y_1}^*$ since both $I_1$ and
$\widetilde{I}_{t_1,y_1}^*$ are dyadic, which means that $I_1$ is
some ancestor of $ \widetilde{I}_{t_1,y_1}^*$. In this case, since
$\psi_{y_1}(t_1-\cdot)$ is supported in $I_{t_1,y_1}$ and $h_{I_1}$
is constant on $I_{t_1,y_1}$, we have $h_{I_1}*\psi_{y_1}(t_1)=0$.

Combining the two cases, we see that the claim holds.

Similarly, if $I_2\nsubseteq \widetilde{I}_{t_2,y_2}^*$,
then $h_{I_2}*\psi_{y_2}(t_2)=0$.

As a consequence, we have
\begin{eqnarray*}
  f * \psi_{y_1}\psi_{y_2}(t_1,t_2)&=& \sum_{R=I_1\times I_2\in\D\times\D}
  (f,h_R)h_{I_1}*\psi_{y_1}(t_1) h_{I_2}*\psi_{y_2}(t_2)\\
  &=& \sum_{R=I_1\times I_2\in\D\times\D, R\subset \widetilde{I}_{t_1,y_1}^*\times \widetilde{I}_{t_2,y_2}^*}
  (f,h_R)h_{I_1}*\psi_{y_1}(t_1) h_{I_2}*\psi_{y_2}(t_2).
\end{eqnarray*}

We now estimate $G_1$. First let $P_{\widetilde{\Omega}_1}f$ denote the projection
\[
  P_{\widetilde{\Omega}_1}f=\sum_{R=I_1\times I_2\in \widetilde{\Omega}_1}
  (f,h_R)h_{R}.
\]
From the results above, we have
\begin{eqnarray*}
    (G_1)&:=& \int\!\!\!\int_{\F_1} \, |f * \psi_{y_1}\psi_{y_2}(t_1,t_2)|^2 \,
        \frac{dt_1 \, dy_1 \, dt_2 \, dy_2}{y_1y_2}\\
    &=&\int\!\!\!\int_{\F_1} \, \bigg|\sum_{R=I_1\times I_2\in\D\times\D, R\subset \widetilde{I}_{t_1,y_1}^*\times \widetilde{I}_{t_2,y_2}^*}
  (f,h_R)h_{I_1}*\psi_{y_1}(t_1) h_{I_2}*\psi_{y_2}(t_2)\bigg|^2 \,
        \frac{dt_1 \, dy_1 \, dt_2 \, dy_2}{y_1y_2}\\
    &=&\int\!\!\!\int_{\F_1} \, \bigg|\sum_{R=I_1\times I_2\in\D\times\D, R\in \widetilde{\Omega}_1}
  (f,h_R)h_{I_1}*\psi_{y_1}(t_1) h_{I_2}*\psi_{y_2}(t_2)\bigg|^2 \,
        \frac{dt_1 \, dy_1 \, dt_2 \, dy_2}{y_1y_2}\\
    &=& \int\!\!\!\int_{\F_1} \, \bigg| P_{\widetilde{\Omega}_1}f *\psi_{y_1}(t_1) \psi_{y_2}(t_2)\bigg|^2 \,
        \frac{dt_1 \, dy_1 \, dt_2 \, dy_2}{y_1y_2},
\end{eqnarray*}
Here the last equality holds since the terms $R\in
\widetilde{\Omega}_1$  but $R\not\subset
\widetilde{I}_{t_1,y_1}^*\times \widetilde{I}_{t_2,y_2}^*$ are
zero.

Then, using the $L^2$ boundedness of the Littlewood--Paley $g$-function, we
see that
\begin{eqnarray*}
    G_1&\leq& \int\!\!\!\int_{\R_+^2\times \R_+^2} \,
    \bigg| P_{\widetilde{\Omega}_1}f*\psi_{y_1}(t_1) \psi_{y_2}(t_2)\bigg|^2 \,
        \frac{dt_1 \, dy_1 \, dt_2 \, dy_2}{y_1y_2}\\
       &\leq & C\|  P_{\widetilde{\Omega}_1}f\|_{L^2(\R\otimes\R)}^2\\
       &=& C\sum_{R\in \D\times\D}
       (P_{\widetilde{\Omega}_1}f,h_R)^2\\
       &=& C\sum_{R\in \widetilde{\Omega}_1}
       (f,h_R)^2\\
       &\leq& C|\widetilde{\Omega}_1|\|f\|_{\bmo_{d,d}(\R\otimes\R)}^2\\
       &\leq& 4C\cdot C(\delta)^2|\Omega|\|f\|_{\bmo_{d,d}(\R\otimes\R)}^2.
\end{eqnarray*}
Repeating the proof above, we find that $(G_2)\leq 4C\cdot
C(\delta)^2|\Omega|\|f\|_{\bmo_{d,\delta}(\R\otimes\R)}^2$,
$(G_3)\leq 4C\cdot
C(\delta)^2|\Omega|\|f\|_{\bmo_{\delta,d}(\R\otimes\R)}^2$ and
$(G_4)\leq 4C\cdot
C(\delta)^2|\Omega|\|f\|_{\bmo_{\delta,\delta}(\R\otimes\R)}^2$.
Combining the estimates from $G_1$ to $G_4$, we see that
inequality \eqref{eqn:product BMO dyadic to BMO} holds with a
constant~$C$ independent of~$\Om$, as required. In particular,
\begin{eqnarray*}
   \lefteqn{\|f\|_{\bmo(\R\otimes\R)}}\\
   &\leq& 2C^{1\over2} C(\delta) \max\{\|f\|_{\bmo_{d,d}(\R\otimes\R)},
    \|f\|_{\bmo_{d,\delta}(\R\otimes\R)}, \|f\|_{\bmo_{\delta,d}(\R\otimes\R)},
    \|f\|_{\bmo_{\delta,\delta}(\R\otimes\R)}\}.
    \qedhere
\end{eqnarray*}
\end{proof}

%----------------------------------------------------------

\section{$\vmo$ and product $\vmo$}
\label{sec:resultsproductVMO}
\setcounter{equation}{0}

We begin with the one-parameter case.

The space $\vmo$ of functions of vanishing mean oscillation was
introduced by Sarason in \cite{Sa} as the set of integrable
functions on the circle $\T$ satisfying
$\lim\limits_{\delta\rightarrow0} \sup\limits_{I:
|I|\leq\delta} \intav_I |f-f_I|dx = 0$. This space is the
closure in the \bmo\ norm of the subspace of $\bmo(\T)$
consisting of all uniformly continuous functions on $\T$.

An analogous space $\vmo(\R)$ on the real line was defined by
Coifman and Weiss \cite{CW}, where they proved that it is the
predual of the Hardy space $H^1(\R)$.
\begin{defn}[\cite{CW}]\label{def:VMO_closure}
    $\vmo(\R)$ is the closure of the space
    $C_0^\infty(\R)$ in the $\bmo(\R)$ norm~(\ref{def:BMO
    average}).
\end{defn}

An equivalent version of $\vmo(\R)$ can be defined as follows.
\begin{defn}\label{def:VMO_averages}
    The space $\vmo(\R)$ is the set of all functions
    $f\in\bmo(\R)$ satisfying the following conditions:
    \begin{enumerate}
        \item[(a)] $\lim\limits_{\delta\rightarrow 0}\
            \sup\limits_{Q:\ |Q|<\delta}\
            \intav_Q|f-f_Q|\,dx=0;$

        \item[(b)] $\lim\limits_{N\rightarrow \infty}\
            \sup\limits_{Q:\ |Q|>N}\
            \intav_Q|f-f_Q|\,dx=0;$ and

        \item[(c)] $\lim\limits_{R\rightarrow \infty}\
            \sup\limits_{Q:\ Q\cap B(0,R)\ =\ \emptyset\ }\
            \intav_Q|f-f_Q|\,dx=0$,
    \end{enumerate}
    where $Q$ denotes an interval in $\R$.
\end{defn}
For the proof of the equivalence of Definitions
\ref{def:VMO_closure} and \ref{def:VMO_averages} of $\vmo(\R)$,
see the Lemma in Section 3 of \cite[pp.166--167]{U}. See also
\cite[Theorem 7]{Bour}.

There is a third equivalent definition of $\vmo(\R)$, in terms of Carleson
measures.
\begin{defn}\label{def:VMO Carleson}
A function $f\in \bmo(\R)$ belongs to $\vmo(\R)$ if
\begin{enumerate}
    \item[(a)] $\lim\limits_{\delta\rightarrow 0}\
        \sup\limits_{Q:\ |Q|<\delta}\ {\displaystyle
        1\over\displaystyle  |Q|} \int_{T(Q)}
        |f*\psi_y(t)|^2\, {\displaystyle
        dt\,dy\over\displaystyle  y}=0;$

    \item[(b)] $\lim\limits_{N\rightarrow \infty}\
        \sup\limits_{Q:\ |Q|>N}\ {\displaystyle
        1\over\displaystyle  |Q|} \int_{T(Q)}
        |f*\psi_y(t)|^2\, {\displaystyle
        dt\,dy\over\displaystyle  y}=0;$ and

    \item[(c)] $\lim\limits_{R\rightarrow \infty}\
        \sup\limits_{Q:\ Q\cap B(0,R)\ =\ \emptyset\ }\
        {\displaystyle 1\over\displaystyle  |Q|}
        \int_{T(Q)} |f*\psi_y(t)|^2\, {\displaystyle
        dt\,dy\over\displaystyle  y}=0,$
\end{enumerate}
where $\psi$ is any function of the form specified in
Proposition \ref{prop:psiprops}.
\end{defn}

The equivalence of Definitions \ref{def:VMO_averages} and
\ref{def:VMO Carleson} can be shown as follows. First, it is a
routine estimate that
$$
   {1\over |Q|}\int_{T(Q)}|\psi_y*f(t)|^2\,{dt\,dy\over y}\leq
   C\intav_{4Q}|f(x)-f_{4Q}|^2\,dx,
$$
where $Q$ is an arbitrary interval in $\R$, $4Q$ is the
interval with the same midpoint as $Q$ and four times the
length, and $C$ is a constant independent of $Q$ and $f$. As a
consequence, (a), (b) and (c) in Definition~\ref{def:VMO
Carleson} follow directly from (a), (b) and (c) in
Definition~\ref{def:VMO_averages}. Conversely, suppose $f$
satisfies Definition~\ref{def:VMO Carleson}. Then it follows
from  Proposition 3.3 in \cite{DDSTY} that $f$ satisfies
Definition \ref{def:VMO_averages}. We note that \cite{DDSTY}
deals with the generalized space $\vmo_L(\R^n)$ of \vmo\
functions associated to a differential operator $L$ satisfying
the conditions that $L$ has a bounded holomorphic functional
calculus on $L^2(\R^n)$ and that the heat kernel of the
analytic semigroup generated by $L$ has suitable upper bounds.
We need only the special case when $L$ is the Laplacian
$\Delta$. It is shown in Proposition 3.6 in \cite{DDSTY}, by an
argument using the tent space corresponding to \vmo, that
$\vmo_{\Delta}(\R^n)$ coincides with the usual \vmo\ as in
Definition \ref{def:VMO_averages}.

We turn to the dyadic one-parameter case. Again we give three
equivalent definitions. First, $\vmo_d(\R)$ is the closure of
the space $C_0^\infty(\R)$ in the dyadic $\bmo_d(\R)$ norm of
formula (\ref{eqn:AverageBMOd}). Second, in terms of averages,
we define $\vmo_d(\R)$ as in Definition \ref{def:VMO_averages}
of $\vmo(\R)$ but taking the three suprema over only dyadic
intervals $I$ instead of arbitrary intervals~$Q$. The third
definition is in terms of a Carleson condition on Haar
coefficients as follows.

\begin{defn}\label{def:dyadic VMO}
A function $f\in \bmo_d(\R)$ belongs to the dyadic VMO space
$\vmo_d(\R)$ if
\begin{enumerate}
    \item[(a)] $\lim\limits_{\delta\rightarrow 0}\
        \sup\limits_{J:\ J\in\D}\
        {\displaystyle1\over\displaystyle |J|}\
        \sum\limits_{I:\ I\subset J,\ I\in\D,\ |I|<\delta\
        } (f,h_I)^2=0;$

    \item[(b)] $\lim\limits_{N\rightarrow \infty}\
        \sup\limits_{J:\ J\in\D}\
        {\displaystyle1\over\displaystyle |J|}\
        \sum\limits_{I:\ I\subset J,\ I\in\D,\ |I|>N\ }
        (f,h_I)^2=0;$ and

    \item[(c)] $\lim\limits_{R\rightarrow \infty}\
        \sup\limits_{J:\ J\in\D}\
        {\displaystyle1\over\displaystyle |J|}\
        \sum\limits_{I:\ I\subset J,\ I\in\D,\ I\cap
        B(0,R)\ =\ \emptyset\ } (f,h_I)^2=0.$
\end{enumerate}
\end{defn}
We note that, as for dyadic \bmo\  (Definition
\ref{def:CarlesonBMOd}), allowing $J$ in Definition
\ref{def:dyadic VMO} to range over all intervals, not just
dyadic intervals, produces the same space $\vmo_d(\R)$, with a
comparable norm.

\begin{prop}
    The following three definitions of the dyadic $\vmo$ space $\vmo_d(\R)$ are
    equivalent.
    \begin{enumerate}
        \item[\textup{(1)}] The definition of $\vmo_d(\R)$
            as the closure of~$C_0^\infty(\R)$ in the
            $\bmo_d(\R)$~norm
            \textup{(\ref{eqn:AverageBMOd})} in terms of
            average.

        \item[\textup{(2)}] The dyadic version
            Definition~\ref{def:VMO_averages}, in terms of
           averages.

        \item[\textup{(3)}] Definition~\ref{def:dyadic
            VMO}, in terms of Haar coefficients.
    \end{enumerate}
\end{prop}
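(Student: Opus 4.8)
The plan is to prove the two equivalences $(2)\Leftrightarrow(3)$ and $(3)\Leftrightarrow(1)$, treating the Haar-coefficient description (3) as the hub, and to use only tools already available: Parseval's identity on dyadic intervals, the dyadic John--Nirenberg inequality (the comparability of $\|\cdot\|_{\bmo_d(\R)}$ with $\|\cdot\|_{\bmo_{d,2}(\R)}$), and the Carleson-measure characterization of $\bmo_d(\R)$ from Definition~\ref{def:CarlesonBMOd}. For $(2)\Leftrightarrow(3)$ the point is that for every $J\in\D$ one has the exact identity $\frac{1}{|J|}\sum_{I\in\D,\,I\subset J}(f,h_I)^2=\frac{1}{|J|}\int_J|f-f_J|^2\,dx$, so, up to the $L^1$-versus-$L^2$ comparison supplied by John--Nirenberg for a fixed $f\in\bmo_d(\R)$, it suffices to match the Haar-coefficient supremum in each of conditions (a), (b), (c) of Definition~\ref{def:dyadic VMO} with the oscillation supremum in the dyadic form of Definition~\ref{def:VMO_averages}. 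Conditions (a) and (c) are handled by partitioning a given $J$ into its maximal dyadic subintervals of the relevant type (length $<\delta$ for~(a), disjoint from $B(0,R)$ for~(c)): the Haar-coefficient quantity over $J$ then becomes a weighted average of oscillations of $f$ over those subintervals, and the reverse bound is simply the special case in which $J$ itself is already of that type.

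The delicate condition is (b), and it is also the only place where all three conditions are needed at once. Assuming (3), I would fix $Q\in\D$ with $|Q|>N$, write $\frac{1}{|Q|}\int_Q|f-f_Q|^2=\frac{1}{|Q|}\sum_{I\subset Q}(f,h_I)^2$, and split the sum according to $I$ into three parts: $|I|>M$ (bounded by the supremum in (3)(b)); $|I|\le M$ with $I\cap B(0,R)=\emptyset$ (bounded by the supremum in (3)(c)); and $|I|\le M$ with $I\cap B(0,R)\neq\emptyset$. In the third part all such $I$ lie inside $B(0,R+2M)$, so grouping them into $O(R/M)$ dyadic intervals of length $\approx M$ and applying Parseval on each shows that this part is at most $C\,R\,\|f\|_{\bmo_{d,2}(\R)}^2$; dividing by $|Q|>N$ and then choosing $M$ large, then $R$ large, then $N\gg R$ yields the dyadic condition (b) for $f$. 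The converse, $(2)\Rightarrow(3)$ for (b), is immediate, since only $J$ with $|J|>N$ contributes to the supremum in (3)(b), and for such $J$ the restricted Haar sum is dominated by the full Parseval sum.

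For $(3)\Leftrightarrow(1)$, the inclusion $(1)\subseteq(3)$ is the easy direction: a direct estimate shows that every $f\in C_0^\infty(\R)$ satisfies (3)(a)--(3)(c) (Lipschitz continuity gives $|(f,h_I)|\lesssim|I|^{3/2}$, which kills the small-scale sums; the bound $|h_I|\le|I|^{-1/2}$ together with the compact support of $f$ kills the large-scale sums; and for $R$ larger than the radius of $\supp f$ the far-from-origin sums vanish identically), and the set of $f\in\bmo_d(\R)$ satisfying (3)(a)--(3)(c) is closed in the $\bmo_d(\R)$ norm by the usual $\varepsilon/2$ argument, since $\|g\|_{\bmo_d(\R)}^2$ dominates each localized Haar-coefficient quantity; hence the closure of $C_0^\infty(\R)$ in $\bmo_d(\R)$ is contained in $\{f:(3)(a)$--$(3)(c)\}$. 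For the reverse inclusion, given $f$ satisfying (3)(a)--(3)(c) I would first truncate the Haar expansion to the finite family $\mathcal I_n=\{I\in\D:2^{-n}\le|I|\le 2^n,\ I\cap B(0,2^n)\neq\emptyset\}$; combining the Carleson-measure characterization of $\bmo_d(\R)$ with (3)(a)--(3)(c) gives $\big\|f-\sum_{I\in\mathcal I_n}(f,h_I)h_I\big\|_{\bmo_d(\R)}\to0$. It then remains to approximate each finite Haar sum $\sum_{I\in\mathcal I_n}(f,h_I)h_I$ --- a compactly supported step function all of whose jumps sit at dyadic rationals --- by smooth compactly supported functions in the $\bmo_d(\R)$ norm, and a diagonal argument then places $f$ in the $\bmo_d(\R)$-closure of $C_0^\infty(\R)$.

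That last step is the main obstacle. Naive mollification does not work: convolving a step function $g$ with $\phi_\varepsilon$ leaves a transition of width $\approx\varepsilon$ near each jump, and a dyadic interval of length $\approx\varepsilon$ sitting at that transition still sees an oscillation of the order of the jump height, so $\|g-g*\phi_\varepsilon\|_{\bmo_d(\R)}$ need not tend to $0$. The remedy is to make the smooth transitions \emph{logarithmically slow}: build the approximant so that it changes by only $O(1/n)$ across each dyadic scale, so that its oscillation over every dyadic interval is $O(1/n)$; then the approximation does converge in $\bmo_d(\R)$. Carrying this construction out carefully --- it is the dyadic counterpart of the already somewhat delicate fact that smooth functions are dense in continuous $\vmo$ --- is where the real work lies; everything else reduces to Parseval's identity, John--Nirenberg, and routine $\varepsilon/2$ arguments.
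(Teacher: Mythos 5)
Your architecture is close to the paper's but not identical: the paper proves $(1)\Leftrightarrow(2)$ by deferring to Uchiyama's argument for the continuous case and $(2)\Leftrightarrow(3)$ by citing the estimates \eqref{eqn:average to Carleson} and \eqref{eqn:Carleson to average}, whereas you make $(3)$ the hub. For $(2)\Leftrightarrow(3)$ your treatment is in fact more careful than the paper's at the one point where those displayed estimates do not apply verbatim, namely condition (b): a dyadic $Q$ with $|Q|>N$ contains arbitrarily small dyadic subintervals, so the localized John--Nirenberg bound gives nothing, and your three-way splitting of $\sum_{I\subset Q}(f,h_I)^2$ by scale and position --- using all three hypotheses of Definition~\ref{def:dyadic VMO} simultaneously, with the near-origin block bounded by $C(R+M)\|f\|_{\bmo_{d,2}}^2$ and absorbed by $|Q|>N$ --- is correct and genuinely needed. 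One caveat: the converse for (b), which you call immediate, is not quite; dominating the restricted Haar sum by the full Parseval sum $\intav_J|f-f_J|^2$ leaves you needing that small $L^1$ oscillation over a single large $J$ forces small $L^2$ oscillation over $J$ for $f$ in a fixed $\bmo_d$ ball. That is the refined John--Nirenberg estimate $\intav_J|f-f_J|^2\lesssim \|f\|_{\bmo_d}\big(\intav_J|f-f_J|\big)\log\big(e+\|f\|_{\bmo_d}/\intav_J|f-f_J|\big)$, obtained by splitting the distribution function at a level $\lambda_0\sim\|f\|_{\bmo_d}\log(1/\eta)$; it is standard but should be stated, since the mere norm equivalence $\|\cdot\|_{\bmo_d}\approx\|\cdot\|_{\bmo_{d,2}}$ does not deliver it.

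The genuine gap is the one you flag yourself: in $(3)\Rightarrow(1)$, after reducing (correctly, via the Carleson characterization and the three tail conditions) to approximating a finite Haar sum by $C_0^\infty$ functions in the $\bmo_d$ norm, you assert the logarithmic-transition construction without carrying it out, and this is where all the content of the closure definition lives. Your diagnosis is right --- a dyadic interval of length comparable to the mollification scale abutting a jump sees oscillation of the order of the jump height, so $\|g-g*\phi_\varepsilon\|_{\bmo_d}\not\to0$ --- and the proposed fix does work, for a reason worth making explicit: every jump of a finite Haar sum sits at a dyadic rational, which is a boundary point (never an interior point) of every sufficiently small dyadic interval, so a transition that changes by $O(1/n)$ per dyadic generation has mean oscillation $O(1/n)$ on every dyadic interval, including the large ones that do straddle the jump (where the transition region contributes only $O(1/n)$ in $L^1$ average). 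If you prefer to avoid the construction entirely, a soft alternative closes the gap: the $\bmo_d$-closures of FH and of $C_0^\infty$ are both closed subspaces of $\vmo_d$, the dual of $\vmo_d$ is $H^1_d\subset L^1_{\textup{loc}}$ with pairing extending $\int b\varphi$, and a nonzero $b\in L^1$ cannot annihilate $C_0^\infty$; Hahn--Banach then forces the two closures to coincide. Either way, you have isolated a step that the paper itself treats very lightly (its product analogue, Proposition~\ref{prop:dyadicproductVMOequiv}, declares $\textup{FH}\subset\textup{clos}_{\bmo_{d,d}}C_0^\infty$ ``easy to verify,'' and the continuous-case argument the paper cites for $(1)\Leftrightarrow(2)$ uses mollification, which, as you observe, does not transfer to the dyadic setting); your proof is not complete until that step is written out.
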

\begin{proof}
The proof of the equivalence of definitions (1) and (2) follows
the corresponding proof in the continuous case. The equivalence
of definitions (2) and (3) follows directly from the estimates
(\ref{eqn:average to Carleson}) and (\ref{eqn:Carleson to
average}) in our proof of the  equivalence of Definitions
\ref{def:AverageBMOd} and \ref{def:CarlesonBMOd} of
$\bmo_d(\R)$.
\end{proof}

Similarly, for each $\delta\in\R$, there are three equivalent
definitions for the dyadic \vmo\ space $\vmo_\delta(\R)$ with
respect to the collection $\Dd$ of translated dyadic intervals,
which is defined at the start of
Section~\ref{sec:dyadicintervals}.

% We omit the proofs for the
%dyadic version.

Next we  consider the product \vmo\ space $\vmo(\R\otimes\R)$,
for simplicity with only two parameters. Here we give only two
equivalent definitions, since the one-parameter definition in
terms of averages does not generalize naturally.

First, $\vmo(\R\otimes\R)$ is the closure of
$C_0^\infty(\R\otimes\R)$ in the product $\bmo(\R\otimes\R)$
norm. The second definition is in terms of Carleson measures,
as follows.
\begin{defn}\label{def:product VMO Carleson}
A function $f\in \bmo(\R\otimes\R)$ belongs to
$\vmo(\R\otimes\R)$ if
\begin{enumerate}
\item[\textup{(a)}] $\lim\limits_{\delta\rightarrow 0}\
    \sup\limits_{\Omega} {\displaystyle1\over
    \displaystyle|\Omega|}\ \sum\limits_{R\in\D\times\D:\
    R\subset \Omega,\ |R|<\delta\ }\ \int_{T(R)}
    |f*\psi_y(t)|^2\, {\displaystyle
    dt_1\,dy_1\,dt_2\,dy_2\over\displaystyle
    y_1y_2}=0;$

\item[\textup{(b)}] $\lim\limits_{N\rightarrow \infty}\
    \sup\limits_{\Omega} {\displaystyle1\over\displaystyle
    |\Omega|}\ \sum\limits_{R\in\D\times\D:\ R\subset
    \Omega,\ |R|>N\ }\ \int_{T(R)} |f*\psi_y(t)|^2\,
    {\displaystyle dt_1\,dy_1\,dt_2\,dy_2\over\displaystyle
    y_1y_2}=0;$ and

\item[\textup{(c)}] $\lim\limits_{R\rightarrow \infty}\
    \sup\limits_{\Omega} {\displaystyle1\over\displaystyle
    |\Omega|}\ \sum\limits_{R\in\D\times\D:\ R\subset
    \Omega,\ R\not\subset B(0,N)\ } \int_{T(R)}
    |f*\psi_y(t)|^2\, {\displaystyle
    dt_1\,dy_1\,dt_2\,dy_2\over\displaystyle y_1y_2}=0.$
\end{enumerate}
\end{defn}

Here and in the definitions below, $\Omega$ ranges over all
open sets in $\R\otimes\R$ of finite measure.

A short calculation shows that Definition \ref{def:product VMO
Carleson} is equivalent to the definition of
$\vmo(\R\otimes\R)$ given in \cite[Prop 5.1(ii)]{LTW}. In
\cite{LTW} the equivalence of this last definition and the
definition in terms of $C_0^\infty(\R\otimes\R)$ is proved.

Finally, we define the dyadic product \vmo\ space
$\vmo_{d,d}(\R\otimes\R)$, in two ways. First,
$\vmo_{d,d}(\R\otimes\R)$ is the closure of
$C_0^\infty(\R\otimes\R)$ in the dyadic
$\bmo_{d,d}(\R\otimes\R)$ norm. The second definition is in
terms of a Carleson condition on the Haar coefficients, as
follows.

\begin{defn}\label{def:product dyadic VMO}
A function $f\in \bmo_{d,d}(\R\otimes\R)$ belongs to the
\emph{dyadic product \vmo\ space} $\vmo_{d,d}(\R\otimes\R)$ if
\begin{enumerate}
    \item[(a)] $\lim\limits_{\delta\rightarrow 0}\
        \sup\limits_{\Omega}
        {\displaystyle1\over\displaystyle |\Omega|}\
        \sum\limits_{R\in\D\times\D:\ R\subset \Omega,\
        |R|<\delta\ } (f,h_R)^2=0;$

    \item[(b)] $\lim\limits_{N\rightarrow \infty}\
        \sup\limits_{\Omega}
        {\displaystyle1\over\displaystyle |\Omega|}\
        \sum\limits_{R\in\D\times\D:\ R\subset \Omega,\
        |R|>N\ } (f,h_R)^2=0;$ and

    \item[(c)] $\lim\limits_{N\rightarrow \infty}\
        \sup\limits_{\Omega}
        {\displaystyle1\over\displaystyle |\Omega|}\
        \sum\limits_{R\in\D\times\D:\ R\subset \Omega,\
        R\not\subset B(0,N)\ } (f,h_R)^2=0.$
\end{enumerate}
\end{defn}

We define $\vmo_{d,\delta}(\R\otimes\R)$,
$\vmo_{\delta,d}(\R\otimes\R)$, and
$\vmo_{\delta,\delta}(\R\otimes\R)$ similarly.

\begin{prop}\label{prop:dyadicproductVMOequiv}
    The following two definitions of dyadic product
    $\vmo_{d,d}(\R\otimes\R)$ are equivalent.
    \begin{enumerate}
        \item[\textup{(1)}] The definition of
            $\vmo_{d,d}(\R\otimes\R)$ as the closure
            of~$C_0^\infty(\R\otimes\R)$ in the
            $\bmo_{d,d}(\R\otimes\R)$~norm.

        \item[\textup{(2)}] Definition~\ref{def:product
            dyadic VMO}, in terms of Haar coefficients.
    \end{enumerate}
    The corresponding result holds for each of
    $\vmo_{d,\delta}(\R\otimes\R)$,
    $\vmo_{\delta,d}(\R\otimes\R)$, and
    $\vmo_{\delta,\delta}(\R\otimes\R)$.
\end{prop}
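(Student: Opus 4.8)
The plan is to prove the two inclusions separately, following the scheme of the one-parameter dyadic case but working with the biparameter Haar system $\{h_R=h_I\times h_J:R=I\times J\in\D\times\D\}$; the argument will use only the dyadic and Carleson structure, not the particular choice of grid in each coordinate, so it will simultaneously yield the equivalence for $\vmo_{d,\delta}(\R\otimes\R)$, $\vmo_{\delta,d}(\R\otimes\R)$ and $\vmo_{\delta,\delta}(\R\otimes\R)$. Let $V\subseteq\bmo_{d,d}(\R\otimes\R)$ denote the set of functions satisfying the three conditions of Definition~\ref{def:product dyadic VMO}; the goal is to show that $V$ equals the closure of $C_0^\infty(\R\otimes\R)$ in the $\bmo_{d,d}(\R\otimes\R)$ norm. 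For a family $\mathcal{C}$ of dyadic rectangles set
\[
    N_{\mathcal{C}}(f):=\sup_{\Omega}\ \frac{1}{|\Omega|}\sum_{R\in\mathcal{C},\ R\subset\Omega}(f,h_R)^2,
\]
the supremum being over open sets $\Omega\subset\R\otimes\R$ of finite measure. By the Carleson characterization of $\bmo_{d,d}$ one has $N_{\mathcal{C}}(f)\le C\,\|f\|_{\bmo_{d,d}(\R\otimes\R)}^2$ for every $\mathcal{C}$, so each $N_{\mathcal{C}}^{1/2}$ is a seminorm dominated by a fixed multiple of $\|\cdot\|_{\bmo_{d,d}(\R\otimes\R)}$.

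\emph{The inclusion $\overline{C_0^\infty}\subseteq V$.} First I would check that $C_0^\infty(\R\otimes\R)\subseteq V$. For $\phi\in C_0^\infty$ supported in $[-K,K]^2$, the biparameter Haar coefficients $(\phi,h_{I\times J})$ are estimated coordinatewise from the cancellation and smoothness of $\phi$ in each variable, exactly as in the one-parameter dyadic case; combined with the facts that $(\phi,h_R)=0$ unless $R$ meets $[-K,K]^2$ and that $\sum_R(\phi,h_R)^2=\|\phi\|_2^2<\infty$, these bounds show that $N_{\mathcal{C}_n}(\phi)\to0$ along each of the three sequences $(\mathcal{C}_n)$ of rectangle families occurring in Definition~\ref{def:product dyadic VMO}, that is, $\phi\in V$. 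Next, since each of conditions (a), (b), (c) asserts that $N_{\mathcal{C}_n}(f)\to0$ along such a sequence, and each $N_{\mathcal{C}_n}^{1/2}$ is a seminorm dominated by a constant times $\|\cdot\|_{\bmo_{d,d}(\R\otimes\R)}$, these conditions are stable under $\bmo_{d,d}$-convergence; hence $V$ is closed in $\bmo_{d,d}(\R\otimes\R)$, and therefore contains $\overline{C_0^\infty}$.

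\emph{The inclusion $V\subseteq\overline{C_0^\infty}$.} Fix $f\in V$. The first step is to approximate $f$ in $\bmo_{d,d}$ by a finite biparameter Haar polynomial (a finite linear combination of the $h_R$). For $M>0$ put
\[
    S_M:=\{\,R=I\times J\in\D\times\D:\ M^{-1}\le|R|\le M\ \text{and}\ R\subset B(0,M)\,\},
\]
which is a finite collection of rectangles, and set $f_M:=\sum_{R\in S_M}(f,h_R)\,h_R$. The complement of $S_M$ is contained in the union of the three rectangle families of Definition~\ref{def:product dyadic VMO} taken with parameters $\delta=M^{-1}$ and $N=M$, so subadditivity of the Carleson sum gives
\[
    \|f-f_M\|_{\bmo_{d,d}(\R\otimes\R)}^2
    \le C\Bigl(N_{\{|R|<M^{-1}\}}(f)+N_{\{|R|>M\}}(f)+N_{\{R\not\subset B(0,M)\}}(f)\Bigr)
    \longrightarrow 0\qquad(M\to\infty).
\]
Thus $f$ lies in the $\bmo_{d,d}$-closure of the finite biparameter Haar polynomials, each of which is a bounded, compactly supported function. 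The second step is to approximate each such Haar polynomial by a function of $C_0^\infty(\R\otimes\R)$ in the $\bmo_{d,d}$ norm; this is carried out exactly as in the one-parameter dyadic case, whose proof in turn follows the continuous case. Combining the two approximations gives $f\in\overline{C_0^\infty}$; replacing $\D$ by $\Dd$ in the relevant coordinate throughout yields the corresponding statements for $\vmo_{d,\delta}(\R\otimes\R)$, $\vmo_{\delta,d}(\R\otimes\R)$ and $\vmo_{\delta,\delta}(\R\otimes\R)$.

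\emph{Main obstacle.} The delicate step is the second one above: the $\bmo_{d,d}$-approximation of a finite biparameter Haar polynomial by a smooth, compactly supported function. This is where the genuinely multiparameter structure intervenes, because the test sets $\Omega$ in the Carleson condition range over all open sets of finite measure rather than over rectangles, so a naive coordinatewise mollification does not suffice. Instead one replaces $\Omega$ by a slightly enlarged set assembled from the dyadic rectangles it contains --- the enlargement device already used in the proof of Theorem~\ref{thm:intersectionoftranslatesBMORn} --- and controls the Haar energy of the smoothing error over these enlargements. By comparison, the coordinatewise Haar-coefficient estimates needed for the first inclusion are routine.
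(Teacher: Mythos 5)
Your overall architecture coincides with the paper's. The paper proves $\textup{clos}_{\bmo_{d,d}}\textup{FH}=\vmo_{d,d}$ (FH $=$ finite linear combinations of the $h_R$) using exactly your truncation $f_M$ (their $f_n$ in~\eqref{eqn:FH approx}), and then proves $\textup{clos}_{\bmo_{d,d}}C_0^\infty=\textup{clos}_{\bmo_{d,d}}\textup{FH}$ by checking that $C_0^\infty$ functions satisfy (a)--(c) and asserting that $\textup{FH}\subset\textup{clos}_{\bmo_{d,d}}C_0^\infty$ is ``easy to verify.'' Your additional observation that $V$ is closed because each Carleson seminorm $N_{\mathcal C}^{1/2}$ is dominated by the $\bmo_{d,d}$ norm is correct and makes explicit something the paper leaves implicit.

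The genuine gap is in your final paragraph, i.e.\ precisely in the step $\textup{FH}\subset\textup{clos}_{\bmo_{d,d}}C_0^\infty$. You are right that this is the delicate step and that mollification does not suffice, but your diagnosis and your proposed fix are both off target. The obstruction has nothing to do with the multiparameter structure or with $\Omega$ ranging over general open sets: it already occurs in one parameter. If $\rho_\e$ is a mollifier and $a$ is the midpoint of $I$, then on the dyadic interval of length $\approx\e$ abutting $a$ the function $h_I$ is constant while $h_I*\rho_\e$ traverses half the jump, so $h_I-h_I*\rho_\e$ has mean oscillation comparable to $|I|^{-1/2}$ there, uniformly in $\e$; hence $\|h_I-h_I*\rho_\e\|_{\bmo_d(\R)}\not\to 0$. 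The enlargement of $\Omega$ by dyadic covers (the device from the proof of Theorem~\ref{thm:intersectionoftranslatesBMORn}) is a tool for discretizing a continuous Carleson integral and does not address this at all. What does work is: (i) in one parameter, replace $h_I$ near each of its (dyadic) jump points $a$ by a logarithmic ramp over $[a,a+\e]$; its dyadic mean oscillation is $O(1/\log(1/\e))$, the key point being that the only dyadic intervals straddling $a$ have length at least $|I|$, so no small dyadic interval sees the full jump; (ii) pass to two parameters by writing $h_I\otimes h_J-u\otimes v=(h_I-u)\otimes h_J+u\otimes(h_J-v)$ and using the slice estimate $\sum_{I'\times J'\subset\Omega}(a,h_{I'})^2(b,h_{J'})^2\lesssim\|a\|_{\bmo_d}^2\,\|b\|_{\bmo_d}^2\,|\Omega|$, i.e.\ $\|a\otimes b\|_{\bmo_{d,d}}\lesssim\|a\|_{\bmo_d}\|b\|_{\bmo_d}$, which reduces the biparameter approximation to the one-parameter one. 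With that replacement your argument is complete; as written, the announced mechanism for the one step you single out would not close the proof.
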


\begin{proof}
To prove this proposition, we follow the ideas given
in~\cite{LTW} for the continuous case. Denote by FH the linear
space of finite linear combinations of the Haar basis $\{h_R:\
R\in\D\times\D\}$.

We first claim that
\[
    \textup{clos}_{\bmo_{d,d}}\textup{FH}
    = \vmo_{d,d}.
\]
In fact, from Definition~\ref{def:product dyadic VMO}, it is
immediate that every Haar function $h_R$ belongs to
$\vmo_{d,d}$. Conversely, for each $f\in \vmo_{d,d}$, set
\begin{eqnarray}\label{eqn:FH approx}
   f_n
   := \sum_{R\in\D\times\D:\ R\subset B(0,2^n),\
     2^{-n}\leq |R|\leq 2^n} (f,h_R)h_R
\end{eqnarray}
for every positive integer $n$. Then it is clear that $f_n\in$
FH for each $n$. Moreover, $\|f - f_n\|_{\bmo_{d,d}}$ goes to~0
as $n$ tends to infinity, by conditions (a), (b) and (c) in
Definition~\ref{def:product dyadic VMO}. Hence the claim holds.

Next, we claim that
$$
  \textup{clos}_{\bmo_{d,d}}C_0^\infty
  =   \textup{clos}_{\bmo_{d,d}}\textup{FH}.
$$
In fact, we can see that $ C_0^\infty\subset
\textup{clos}_{\bmo_{d,d}}\textup{FH}$ since for every
$\varphi\in C_0^\infty$, we can verify that $\varphi$ satisfies
conditions (a), (b) and (c) in Definition \ref{def:product
dyadic VMO}. Hence by taking $\varphi_n$ as in
equation~\eqref{eqn:FH approx}, we can approximate $\varphi$ by
functions in~FH. Conversely, it is easy to verify that $
\textup{FH} \subset \textup{clos}_{\bmo_{d,d}}C_0^\infty$.

The proof of Proposition \ref{prop:dyadicproductVMOequiv} is complete.
\end{proof}

\begin{thm}\label{thm:intersectionoftranslates VMO}
    Suppose $\delta\in\R$ satisfies
    condition~\eqref{eqn:meicondR}.
    Then in the one-parameter case,
    \[
        \vmo(\R)
        = \vmo_{d}(\R) \cap
            \vmo_{\delta}(\R),
    \]
    and in the multiparameter case (stated for two parameters for
    simplicity),
    \[
        \vmo(\R\otimes\R)
        = \vmo_{d,d}(\R\otimes\R) \cap
            \vmo_{d,\delta}(\R\otimes\R) \cap
            \vmo_{\delta,d}(\R\otimes\R) \cap
            \vmo_{\delta,\delta}(\R\otimes\R).
    \]
\end{thm}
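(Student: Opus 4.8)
The plan is to reduce the \vmo\ identity to the corresponding \bmo\ identity (Theorems~\ref{thm:intersectionoftranslatesBMOR} and~\ref{thm:intersectionoftranslatesBMORn}) together with the fact that \vmo\ is the \bmo-closure of a suitable space of test functions, and similarly for the dyadic versions. The inclusion $\vmo(\R)\subset\vmo_d(\R)\cap\vmo_\delta(\R)$ (and its multiparameter analogue) is the easy direction: if $f\in\vmo(\R)$, then by Definition~\ref{def:VMO_closure} there are $\varphi_n\in C_0^\infty(\R)$ with $\|f-\varphi_n\|_{\bmo(\R)}\to 0$; since $\|\cdot\|_{\bmo_d(\R)}\le\|\cdot\|_{\bmo(\R)}$ and $\|\cdot\|_{\bmo_\delta(\R)}\le\|\cdot\|_{\bmo(\R)}$ (a point already used in the proof of Theorem~\ref{thm:intersectionoftranslatesBMOR}), the same functions $\varphi_n$ approximate $f$ in both dyadic norms, and each $\varphi_n$ lies in both $\vmo_d(\R)$ and $\vmo_\delta(\R)$ by the closure definition of those spaces; hence $f\in\vmo_d(\R)\cap\vmo_\delta(\R)$. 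In the product case one argues identically, replacing $C_0^\infty(\R)$ by $C_0^\infty(\R\otimes\R)$, using that $\|\cdot\|_{\bmo_{d,d}}$, etc., are each dominated by $\|\cdot\|_{\bmo(\R\otimes\R)}$ (which follows from the nontrivial inclusion $\bmo(\R\otimes\R)\subset\bmo_{d,d}(\R\otimes\R)$ established inside the proof of Theorem~\ref{thm:intersectionoftranslatesBMORn}, with comparable norms).

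For the reverse inclusion, suppose first that $f\in\vmo_d(\R)\cap\vmo_\delta(\R)$. By the \bmo\ result, $f\in\bmo(\R)$, with $\|f\|_{\bmo(\R)}$ controlled by $\max\{\|f\|_{\bmo_d(\R)},\|f\|_{\bmo_\delta(\R)}\}$. It remains to check the three vanishing conditions (a), (b), (c) of the Carleson-measure Definition~\ref{def:VMO Carleson} for $\vmo(\R)$. This is where the quantitative form of the \bmo\ proof is used. Revisiting the proof of Theorem~\ref{thm:intersectionoftranslatesBMOR}: there, for a fixed interval $I_0$ we split $T(I_0)=\F_1\cup\F_2$ according to whether the Mei interval $I_{t,y}^*$ is dyadic or $\delta$-dyadic, and we bounded $(G_1)$ by $C\sum_{I\in\D,\,I\subset J_0}(f,h_I)^2$ where $J_0=2^{N+1}9I_0$ and $|J_0|\le C\cdot C(\delta)|I_0|$. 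The key observation is that this estimate is \emph{localized in the Haar coefficients of} $f$: if $f$ satisfies condition (a) of the dyadic Definition~\ref{def:dyadic VMO} (Haar coefficients on small intervals contribute negligibly, uniformly in $J$), then for $I_0$ small enough all intervals $I\subset J_0$ contributing to $(G_1)$ are themselves small (since $|I|\le 2^{N+1}|3I_0|\to 0$), so $(G_1)/|I_0|\le C\cdot C(\delta)\cdot\big(\sup_J |J|^{-1}\sum_{I\subset J,\,|I|<\varepsilon}(f,h_I)^2\big)\to 0$; similarly for $(G_2)$ using the $\vmo_\delta$ condition. This gives condition (a) of Definition~\ref{def:VMO Carleson}. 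For condition (b), one runs the same argument but now $|I_0|>N$ forces the relevant $I$ to be large, and one invokes condition (b) of Definition~\ref{def:dyadic VMO}; for condition (c), $I_0\cap B(0,R)=\emptyset$ forces $J_0$, and hence every contributing $I$, to be far from the origin (after enlarging $R$ by the fixed factor $2^{N+1}\cdot 9$), and one invokes condition (c) of Definition~\ref{def:dyadic VMO}. Thus $f\in\vmo(\R)$, and by the equivalence of Definitions~\ref{def:VMO_averages} and~\ref{def:VMO Carleson} this agrees with the closure definition.

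The multiparameter case follows the same template applied to the proof of Theorem~\ref{thm:intersectionoftranslatesBMORn}. There the Carleson tent $T(\Omega)$ splits into $\F_1,\dots,\F_4$, and $(G_i)$ is bounded by $C\sum_{R\in\widetilde\Omega_i}(f,h_R)^2$ where $\widetilde\Omega_i$ is built from the (parents of the) Mei rectangles and satisfies $|\widetilde\Omega_i|\le 4C(\delta)^2|\Omega|$. One checks that if $f$ satisfies the three conditions of Definition~\ref{def:product dyadic VMO} in each of the four dyadic grids, then the sums $\sum_{R\in\widetilde\Omega_i}(f,h_R)^2$ restricted to small rectangles (resp.\ large rectangles, resp.\ rectangles far from the origin), normalized by $|\Omega|$, tend to $0$ uniformly in $\Omega$ — here one uses that a small/large/far rectangle $R\subset\widetilde I_{t_1,y_1}^*\times\widetilde I_{t_2,y_2}^*$ forces the corresponding $I_{t_i,y_i}$ to be small/large/far up to the fixed dilation factor $2C(\delta)$. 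Combined with the continuous Definition~\ref{def:product VMO Carleson} and Proposition~\ref{prop:dyadicproductVMOequiv} (identifying the closure and Carleson definitions of the dyadic product spaces) and the equivalence with the $C_0^\infty(\R\otimes\R)$-closure definition of $\vmo(\R\otimes\R)$, this yields $f\in\vmo(\R\otimes\R)$.

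\textbf{Main obstacle.} The delicate point is not the \bmo-level estimates — those are quoted from the previous section — but making the reduction of each of the three vanishing conditions fully rigorous, i.e.\ verifying that the constants $N$, $k_0$, $J_0$ (and in the product case $\widetilde\Omega_i$) interact correctly with the three scales ``$|Q|<\delta$'', ``$|Q|>N$'', ``$Q\cap B(0,R)=\emptyset$''. In particular one must confirm that the fixed geometric dilations ($3I_0$, $9I_0$, $2^{N+1}$, and the analogous rectangle enlargements) only rescale these three regimes by bounded factors, so that ``small'' stays small, ``large'' stays large, and ``far from origin'' stays far after shrinking/enlarging $\delta$, $N$, $R$ by those factors. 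Once this bookkeeping is in place the rest is a routine transcription of the \bmo\ arguments.
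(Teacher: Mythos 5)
Your overall strategy (reduce to the \bmo\ theorems and transfer the three vanishing conditions through the Haar-localized estimate $(G_1)\leq C\sum_{I\in\D,\,I\subset J_0}(f,h_I)^2$) is the same one the paper sketches, and your treatment of the easy inclusion and of condition (a) is sound: every $I$ contributing to $(G_1)$ satisfies $|I|\leq |J_0|\leq C\,C(\delta)|I_0|$, so smallness of $I_0$ does force smallness of all contributing $I$. The gap is in conditions (b) and (c). For (b), the claim that ``$|I_0|>N$ forces the relevant $I$ to be large'' is false: the tent $T(I_0)$ contains points $(t,y)$ with $y$ arbitrarily small, and the sum $\sum_{I\subset J_0}(f,h_I)^2$ runs over \emph{all} scales $|I|\leq |J_0|$ with no lower bound, so condition (b) of Definition~\ref{def:dyadic VMO} (which kills only the coefficients with $|I|>N$) does not by itself control $(G_1)/|I_0|$. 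For (c), the claim that $I_0\cap B(0,R)=\emptyset$ forces $J_0$, and hence every contributing $I$, to be far from the origin is also false: $J_0=2^{N+1}9I_0$ is a concentric dilation, so if $I_0=[R,R+L]$ with $L\gg R$ then $J_0$ contains the origin, and contributing intervals near the origin are not excluded. Thus ``large stays large'' and ``far stays far'' do not survive the fixed dilations, contrary to what your final paragraph asserts.

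The theorem is still provable along your lines, but the reduction must combine the dyadic conditions rather than match them one-to-one. For (b), split $\sum_{I\subset J_0}(f,h_I)^2$ into the part with $|I|>M$ (controlled by dyadic condition (b), uniformly in $J_0$), the part with $|I|\leq M$ and $I\cap B(0,R)=\emptyset$ (controlled by dyadic condition (c)), and the part with $|I|\leq M$ and $I\cap B(0,R)\neq\emptyset$; this last group of intervals lies in $B(0,R+M)$ and its contribution is at most $C(R+M)\|f\|_{\bmo_d(\R)}^2$ by the Carleson condition, which is $o(|I_0|)$ once $|I_0|$ is large relative to the previously chosen $M$ and $R$. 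Condition (c) of Definition~\ref{def:VMO Carleson} is then obtained by treating $|I_0|>N_0$ via the (b)-estimate just established and $|I_0|\leq N_0$ via dyadic condition (c) with $R$ large depending on $N_0$. The same repair is needed in the product case, where the rectangles in $\widetilde{\Omega}_i$ likewise include all small scales and need not avoid the origin.
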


\begin{proof}
We first prove the one-parameter case.

The inclusion $\vmo(\R) \subset \vmo_{d}(\R) \cap
\vmo_{\delta}(\R)$ follows directly from the definitions of
$\vmo(\R)$,  $\vmo_d(\R)$, and  $\vmo_\delta(\R)$ via
averaging.

The proof of the other inclusion $\vmo(\R) \supset \vmo_{d}(\R)
\cap \vmo_{\delta}(\R)$ involves only minor modifications of
our proof for $\bmo(\R)$ above. We use the definition of
$\vmo_d(\R)$ and $\vmo_\delta(\R)$ in terms of Haar
coefficients (Definition~\ref{def:dyadic VMO}). The key point
is that the constant~$C$ in
inequality~\eqref{eqn:BMOlastinequality} is replaced by the
$\e$ from Definition~\ref{def:dyadic VMO}. We omit the details.

For the case of product~$\vmo$, we first show that
$\vmo(\R\otimes\R) \subset \vmo_{d,d}(\R\otimes\R)$. Take $f\in
\vmo(\R\otimes\R)$. Then $f$ is the limit in the
$\bmo(\R\otimes\R)$ norm of a sequence of functions $f_n$ in
$C_0^\infty(\R\otimes\R)$. Then $\{f_n\} \subset
\bmo(\R\otimes\R) \subset \bmo_{d,d}(\R\otimes\R)$, and also
$f_n$ converges to~$f$ in the $\bmo_d(\R\otimes\R)$ norm.
Therefore $f$ belongs to $\vmo_{d,d}(\R\otimes\R)$, as
required. The same argument shows that $f$ belongs to each of
$\vmo_{d,\delta}(\R\otimes\R)$, $\vmo_{\delta,d}(\R\otimes\R)$,
and $\vmo_{\delta,\delta}(\R\otimes\R)$.

Again, we can prove the other inclusion via minor modifications
of our proof for $\bmo(\R\otimes\R)$ above, using the
definition of our dyadic product $\vmo$ spaces in terms of Haar
coefficients (Definition~\ref{def:product dyadic VMO}). Again,
we omit the details.
\end{proof}

\section{Hardy spaces and maximal functions}
\label{sec:hardymaximal}
\setcounter{equation}{0}

%Mei showed that the Hardy space $H^1$ can be written as the sum of
%two suitable translates of the dyadic Hardy space.

We begin with the one-parameter case. Denote by $H^1$ the classical Hardy space and denote by $H^1_d$
(resp.~$H^1_{\delta}$) the dyadic Hardy space with respect to $\D$
(resp.~$\D^\delta$).
Also, denote by $M(f)$ the classical Hardy--Littlewood maximal
function, and denote by $M_d(f)$ (resp.~$M_{\delta}(f)$) the dyadic
Hardy--Littlewood maximal function with respect to $\D$
(resp.~$\D^\delta$).

Then we have the following results.
\begin{prop}\label{prop:H-L function and Hardy space}
Suppose $\delta\in(0,1)$ is far from dyadic rationals:
$d(\delta)>0$. Then the following relations hold between the
continuous and dyadic versions.
\begin{enumerate}
\item[\textup{(i)}] $H^1 = H^1_{d}+H^1_{\delta}$ with
    equivalent norms.
\item[\textup{(ii)}] For each $f\in L^1_\textup{loc}$, $M(f)$ is comparable with $M_{d}(f) +
    M_{\delta}(f)$ pointwise, and the
    implicit constants are independent of $f$.
\end{enumerate}
\end{prop}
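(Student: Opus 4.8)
The plan is to reduce both parts directly to Mei's key tool, Proposition~\ref{prop:Mei}, treating part~(ii) first since it is essentially immediate. The inequality $M_d(f)+M_\delta(f)\le 2M(f)$ holds pointwise because $M_d(f)$ and $M_\delta(f)$ are suprema of the averages $\frac{1}{|I|}\int_I|f|$ taken over the subcollections $\D$ and $\Dd$ of the collection of all intervals. For the reverse inequality, fix $x$ and an arbitrary interval $Q\ni x$; Proposition~\ref{prop:Mei} produces an interval $I$ with $Q\subset I$, $|I|\le C(\delta)|Q|$, and $I\in\D$ or $I\in\Dd$, whence, using $x\in Q\subset I$,
\[
  \frac{1}{|Q|}\int_Q|f|
  \le \frac{C(\delta)}{|I|}\int_I|f|
  \le C(\delta)\bigl(M_d(f)(x)+M_\delta(f)(x)\bigr).
\]
Taking the supremum over all intervals $Q\ni x$ gives $M(f)(x)\le C(\delta)\bigl(M_d(f)(x)+M_\delta(f)(x)\bigr)$, with a constant depending only on~$\delta$, which is part~(ii).

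For part~(i) I would argue at the level of atoms. The inclusion $H^1_d+H^1_\delta\subset H^1$ with $\|f\|_{H^1}\le\|f\|_{H^1_d+H^1_\delta}$ is immediate, since every dyadic or $\delta$-dyadic $H^1$ atom is in particular a classical $H^1$ atom, so atomic decompositions of $g\in H^1_d$ and $h\in H^1_\delta$ combine into one of $g+h$, and one infimizes over the splittings $f=g+h$. For the reverse inclusion, take $f\in H^1$ with a classical atomic decomposition $f=\sum_j\lambda_j a_j$, where $a_j$ is supported on an interval $Q_j$ and $\sum_j|\lambda_j|\le C\|f\|_{H^1}$ for an absolute constant~$C$. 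For each $j$ let $I_j\supset Q_j$ be the interval from Proposition~\ref{prop:Mei}, so $|I_j|\le C(\delta)|Q_j|$ and $I_j\in\D$ or $I_j\in\Dd$. Then $a_j$ is supported in $I_j$, has mean zero, and $\|a_j\|_\infty\le|Q_j|^{-1}\le C(\delta)|I_j|^{-1}$, so $C(\delta)^{-1}a_j$ is a dyadic $H^1$ atom when $I_j\in\D$ and a $\delta$-dyadic $H^1$ atom when $I_j\in\Dd$. Splitting the series according to which grid $I_j$ belongs to, write $f=f_1+f_2$ with $f_1:=\sum_{j:\,I_j\in\D}\lambda_j a_j$ and $f_2:=\sum_{j:\,I_j\in\Dd}\lambda_j a_j$; then $f_1\in H^1_d$ with $\|f_1\|_{H^1_d}\le C(\delta)\sum_j|\lambda_j|$ and $f_2\in H^1_\delta$ with $\|f_2\|_{H^1_\delta}\le C(\delta)\sum_j|\lambda_j|$, so $\|f\|_{H^1_d+H^1_\delta}\le C(\delta)\sum_j|\lambda_j|\le C\,C(\delta)\|f\|_{H^1}$. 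That the splitting is a genuine identity follows because the series for $f_1$ and $f_2$ converge in $H^1_d$ and $H^1_\delta$, hence in $L^1$, while the original decomposition converges to $f$ in $L^1$, so $f=f_1+f_2$ almost everywhere.

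I do not expect a serious obstacle. The only mildly delicate points are: checking that, in the non-compact setting, a classical atom on a bounded interval is a genuine $\delta$-dyadic atom once its support is enclosed in a single member of $\Dd(\R)$ — which is precisely the content of Proposition~\ref{prop:Mei}, the large-scale translations built into $\Dd(\R)$ in Section~\ref{sec:dyadicintervals} being exactly what makes this possible — and keeping straight the passage among the equivalent (atomic, square-function, maximal-function) descriptions of $H^1_d$ and $H^1_\delta$. An alternative route to part~(i) is by duality: $(H^1_d+H^1_\delta)^*=\bmo_d\cap\bmo_\delta$, which by Theorem~\ref{thm:intersectionoftranslatesBMOR} equals $\bmo=(H^1)^*$, and combined with the continuous dense inclusion $H^1_d+H^1_\delta\hookrightarrow H^1$ this forces equality with equivalent norms; but the atomic argument is more transparent and yields the constants explicitly.
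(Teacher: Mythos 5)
Your proposal is correct and takes essentially the same approach as the paper: part~(ii) is verbatim the paper's argument (the trivial inequality plus Proposition~\ref{prop:Mei} applied to an arbitrary $Q\ni x$), and for part~(i) the paper simply cites Corollary~2.4 of Mei rather than giving a proof, but your atomic argument is exactly the one the paper itself writes out for the weighted analogue in Corollary~\ref{cor:weighted H-L function and Hardy space}, with $L^\infty$-normalized in place of $L^2$-normalized atoms. No gaps.
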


\begin{proof}
For part (i), see Corollary~2.4 of \cite{Mei}.

For part (ii), it is immediate from the definitions  that for $f\in
L^1_\textup{loc}$, $M_{d}(f) \leq M(f)$ and $M_{\delta}(f)\leq
M(f)$. Thus, $M_{d}(f) + M_{\delta}(f) \leq 2M(f)$. Next, for each
interval $Q\subset\R$, by Proposition~\ref{prop:Mei} there is a
suitable interval $I\in\D$ or $I\in\Dd$ such that
  \[
     \frac{1}{|Q|}\int_Q|f|
     \leq C(\delta) \frac{1}{|I|}\int_I|f|.
  \]
As a consequence, we have $M(f) \leq
C(\delta)\max\{M_{d}(f),M_{\delta}(f)\} $.
\end{proof}

Now we give a corollary of
Theorem~\ref{thm:intersectionoftranslatesApRHpdbl} and
Proposition~\ref{prop:H-L function and Hardy space}, for weighted
Hardy spaces and weighted maximal functions.

Suppose $\om$ is a doubling weight. Denote by $H^1(\om)$ the
weighted Hardy space. Also, suppose $\om_d$ is a dyadic doubling
weight (resp. $\om_\delta$ is a $\delta$-dyadic doubling weight),
denote by $H^1_d(\om_d)$ (resp.~$H^1_{\delta}(\om_{\delta})$) the
dyadic (resp.~$\delta$-dyadic) Hardy space with respect to $\D$
(resp.~$\D^\delta$).

Denote by $M_\om(f)$ the weighted Hardy--Littlewood maximal
function:
  \[
     M_\om(f)(x)
     := \sup_{Q\ni x}\frac{1}{\om(Q)} \int_Q|f(y)|\om(y)dy,
  \]
where the supremum is taken over all intervals in $\R$. Similarly,
for a dyadic doubling weight $\om_d$, we define the dyadic weighted
Hardy--Littlewood maximal function $M_{d,\om_d}(f)$, and for a
$\delta$-dyadic doubling weight $\om_\delta$, we define the
$\delta$-dyadic weighted Hardy--Littlewood maximal function
$M_{\delta,\om_{\delta}}(f)$; here the supremum is taken over only
the intervals $I\in\D$ (resp.~$I\in\Dd$).

Then we have the following generalizations of Proposition
\ref{prop:H-L function and Hardy space} to the weighted case.

\begin{cor}\label{cor:weighted H-L function and Hardy space}
Suppose $\delta\in(0,1)$ is far from dyadic rationals: $d(\delta)>0$.
Suppose $\om$ is a doubling weight. Then the following relations
hold.
\begin{enumerate}
\item[\textup{(i)}] $H^1(\om) = H^1_{d}(\om)
    + H^1_{\delta}(\om)$ with equivalent norms.
\item[\textup{(ii)}] For
    each $f\in L^1_\textup{loc}$, $M_\om(f)$
    is pointwise equivalent to $M_{d,\om}(f)+M_{\delta,\om}(f)$. Here the implicit
    constants are independent of $f$.
\end{enumerate}
\end{cor}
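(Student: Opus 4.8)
The plan is to establish part~(ii) first, as a weighted counterpart of Proposition~\ref{prop:H-L function and Hardy space}(ii), and then to deduce part~(i) by an atomic-decomposition argument modelled on Mei's proof of the unweighted statement. For part~(ii), one direction is immediate: $M_{d,\om}(f)$ and $M_{\delta,\om}(f)$ are suprema over the subcollections $\D$ and $\Dd$ of the family of all intervals, so $M_{d,\om}(f),M_{\delta,\om}(f)\le M_\om(f)$ pointwise and hence $M_{d,\om}(f)+M_{\delta,\om}(f)\le 2M_\om(f)$. For the reverse bound, fix $x$ and an interval $Q\ni x$, and let $I$ be the interval provided by Proposition~\ref{prop:Mei}, so $Q\subset I$, $|I|\le C(\delta)|Q|$, and $I\in\D$ or $I\in\Dd$. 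Since $\om$ is doubling, Theorem~\ref{thm:intersectionoftranslatesApRHpdbl}(a) shows $\om$ is dyadic $\D$-doubling and dyadic $\Dd$-doubling, so Proposition~\ref{prop:comparable} applies and gives
\[
   \om(Q)\le\om(I)\le C(\delta)\,(C_{\textup{dy}})^{\log_2(4C(\delta))}\,\om(Q).
\]
Because $x\in Q\subset I$ and $|f|\om\ge0$, the quotient $\frac{1}{\om(Q)}\int_Q|f|\om\le\frac{\om(I)}{\om(Q)}\cdot\frac{1}{\om(I)}\int_I|f|\om$ is bounded by $C(\delta)(C_{\textup{dy}})^{\log_2(4C(\delta))}$ times $M_{d,\om}(f)(x)$ if $I\in\D$, and times $M_{\delta,\om}(f)(x)$ if $I\in\Dd$. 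Taking the supremum over $Q\ni x$ yields $M_\om(f)\le C(\delta)(C_{\textup{dy}})^{\log_2(4C(\delta))}\bigl(M_{d,\om}(f)+M_{\delta,\om}(f)\bigr)$ pointwise, with constants independent of~$f$.

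For part~(i) I would use the atomic characterization of the weighted Hardy space. Since $\om$ is doubling, $(\R,|\cdot|,\om\,dx)$ is a space of homogeneous type and $H^1(\om)$ admits an atomic decomposition, the atoms being functions $a$ with $\supp a\subset Q$ for some interval~$Q$, the defining cancellation condition on~$a$, and $\|a\|_\infty\le\om(Q)^{-1}$; the dyadic spaces $H^1_d(\om)$ and $H^1_\delta(\om)$ are characterized by the same conditions with $Q$ restricted to lie in $\D$, resp.~$\Dd$. One inclusion is immediate: a dyadic or $\delta$-dyadic atom is in particular an atom, so $H^1_d(\om)$ and $H^1_\delta(\om)$ embed boundedly in $H^1(\om)$, whence $\|f\|_{H^1(\om)}\le\|f\|_{H^1_d(\om)+H^1_\delta(\om)}$. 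For the reverse inclusion, take $f=\sum_j\lambda_j a_j\in H^1(\om)$ with $\sum_j|\lambda_j|\le 2\|f\|_{H^1(\om)}$ and $\supp a_j\subset Q_j$. Apply Proposition~\ref{prop:Mei} to each $Q_j$ to obtain $I_j$ with $Q_j\subset I_j$, $|I_j|\le C(\delta)|Q_j|$, and $I_j\in\D$ or $I_j\in\Dd$; as in part~(ii), Proposition~\ref{prop:comparable} gives $\om(Q_j)\le\om(I_j)\le C\,\om(Q_j)$ with $C:=C(\delta)(C_{\textup{dy}})^{\log_2(4C(\delta))}$. Then $C^{-1}a_j$ is a dyadic $\om$-atom supported in $I_j$ when $I_j\in\D$, and a $\delta$-dyadic $\om$-atom supported in $I_j$ when $I_j\in\Dd$: the support condition $\supp a_j\subset Q_j\subset I_j$ and the cancellation condition on~$a_j$ are unaffected by enlarging $Q_j$ to $I_j$, and $\|a_j\|_\infty\le\om(Q_j)^{-1}\le C\,\om(I_j)^{-1}$. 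Setting $J_1=\{j:I_j\in\D\}$ and $J_2=\{j:I_j\in\Dd\}$ (choosing one alternative if both occur), and $g=\sum_{j\in J_1}\lambda_j a_j$, $h=\sum_{j\in J_2}\lambda_j a_j$, we get $f=g+h$ with $g\in H^1_d(\om)$, $h\in H^1_\delta(\om)$, and $\|g\|_{H^1_d(\om)}+\|h\|_{H^1_\delta(\om)}\le C\sum_j|\lambda_j|\le 2C\|f\|_{H^1(\om)}$. Hence $H^1(\om)=H^1_d(\om)+H^1_\delta(\om)$ with equivalent norms.

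I expect the main obstacle to be purely organizational: making the notion of a weighted atom (and its cancellation condition, taken either against $dx$ or against $\om\,dx$, according to the convention chosen to define $H^1(\om)$) consistent across the continuous, dyadic, and $\delta$-dyadic scales, so that replacing an atom's supporting interval $Q_j$ by the comparable interval $I_j$ genuinely produces a fixed multiple of a dyadic (or $\delta$-dyadic) atom. Once this is set up, the whole argument reduces to Propositions~\ref{prop:Mei} and~\ref{prop:comparable}, exactly as in Mei's \cite[Cor.~2.4]{Mei} for the unweighted case and as in part~(ii) above. An alternative route to part~(i) would be duality from a weighted version $\bmo(\om)=\bmo_d(\om)\cap\bmo_\delta(\om)$ of Theorem~\ref{thm:intersectionoftranslatesBMOR}, using $H^1(\om)^\ast=\bmo(\om)$ and $\bigl(H^1_d(\om)+H^1_\delta(\om)\bigr)^\ast=\bmo_d(\om)\cap\bmo_\delta(\om)$; but since no such weighted $\bmo$ result is established in the present development, I would prefer the self-contained atomic argument.
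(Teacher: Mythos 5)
Your proposal is correct and follows essentially the same route as the paper: part (ii) via Propositions~\ref{prop:Mei} and~\ref{prop:comparable} applied to the covering interval $I\supset Q$ to get $\om(Q)\le\om(I)\le C(\delta)(C_{\textup{dy}})^{\log_2(4C(\delta))}\om(Q)$, and part (i) by mapping each $H^1(\om)$-atom supported in $Q$ to a fixed multiple of a dyadic or $\delta$-dyadic atom supported in $I$. The only difference is cosmetic: the paper normalizes atoms in $L^2(\om)$ with $\|a\|_{L^2(\om)}\le\om(Q)^{-1/2}$ and cancellation $\int a\,\om\,dx=0$, rather than your $L^\infty$ normalization, and the same comparability of $\om(Q)$ and $\om(I)$ does the work in either convention.
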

In particular, this corollary holds for $w\in A_p$, $1\leq
p\leq\infty$. %Also we note that result here are true on $\T$.

\begin{proof}
We first prove (i).  We recall the definition of atoms in weighted
Hardy spaces. A function $a$ is called an atom of the Hardy space
$H^1(\om)$ if it satisfies
\begin{enumerate}
\item[(a)] $\supp a\subset Q$ for some interval $Q\subset\R$;
\item[(b)] $\|a\|_{L^2(\om)}\leq \om(Q)^{-1\slash 2}$; and
\item[(c)] $\int a(x)\om(x)\,dx=0 $.
\end{enumerate}
Similarly, the atoms of $H^1_d(\om)$ (resp.~$H^1_{\delta}(\om)$)
satisfies the same conditions (a), (b) and (c) above with the extra
condition that $Q\in\D$ (resp.~$Q\in\D^\delta$).

From the definitions of atoms, it is immediate that each
$H^1_d(\om)$-atom or $H^1_{\delta}(\om)$-atom is an $H^1(\om)$-atom.
Thus, $H^1_d(\om)\subset H^1(\om)$ and $H^1_{\delta}(\om)\subset
H^1(\om)$ with norms $\|f\|_{H^1(\om)}\leq \|f\|_{H^1_d(\om)}$ and
$\|f\|_{H^1(\om)}\leq \|f\|_{H^1_{\delta}(\om)}$.

We now prove the converse. Suppose $a$ is an $H^1(\om)$-atom
satisfying (a), (b) and (c) with an interval $Q$. Then, by
Proposition \ref{prop:Mei}, there exists an interval $I$ such that
$Q\subset I$, $|I|\leq C(\delta)|Q|$ and $I\in \D$ or $I\in
\D^\delta$. Moreover, Proposition \ref{prop:comparable} implies that
$$
   \|a\|_{L^2(\om)}
   \leq \om(Q)^{-1\slash 2}
   \leq C(\delta)^{1\slash2}
    (C_\textup{dy})^{{1\over2}\log_2(4C(\delta))}\om(I)^{-1\slash2}.
$$
Let
$C_0:=C(\delta)^{1\slash2}(C_\textup{dy})^{{1\over2}\log_2(4C(\delta))}$.
Then the above inequality implies that $C_0^{-1}a$ is an
$H^1_d(\om)$-atom if $I\in\D$, and an $H^1_{\delta}(\om)$-atom if
$I\in\D^\delta$. Hence $H^1_d(\om)+H^1_{\delta}(\om)\subset
H^1_d(\om)$ with norms $
\|f\|_{H^1_d(\om)}+\|f\|_{H^1_{\delta}(\om)}\leq
C_0\|f\|_{H^1(\om)}$.

Now we turn to (ii). From the definition of the weighted
Hardy--Littlewood maximal functions, it is immediate to see that $
M_{d,\om}(f)\leq M_{\om}(f) $ and $M_{\delta,\om}(f)\leq M_{\om}(f)$
for every $f\in L^1_\textup{loc}$. Conversely, for each interval
$Q\subset \R$, by Proposition \ref{prop:Mei}, there exists an
interval $I$ such that $Q\subset I$, $|I|\leq C(\delta)|Q|$ and
$I\in \D$ or $I\in \D^\delta$. Moreover, from Proposition
\ref{prop:comparable}, we have
$$
   \om(I)
   \leq (C_\textup{dy})^{\log_2(4C(\delta))}\om(Q) |I|\slash|Q|
   \leq C(\delta)(C_\textup{dy})^{\log_2(4C(\delta))}\om(Q).
$$
Consequently, we obtain that
$$
  {1\over\om(Q)}\int_Q|f(y)|\om(y)\,dy
  \leq  C(\delta)(C_\textup{dy})^{\log_2(4C(\delta))}
    {1\over\om(I)}\int_I|f(y)|\om(y)\,dy,
$$
which implies that $M_{\om}(f) \leq
C(\delta)(C_\textup{dy})^{\log_2(4C(\delta))}
\left(M_{\delta,\om}(f)+M_{d,\om}(f)\right)$.
\end{proof}

Now we turn to the multiparameter case (stated for two parameters).

Denote by $H^1(\R\otimes\R)$ the product Hardy space. S.-Y. Chang
and R. Fefferman \cite{CF} showed that the dual of
$H^1(\R\otimes\R)$  is the product \bmo\ space $\bmo(\R\otimes\R)$
as mentioned in Section \ref{sec:resultsBMO}. Recently, Lacey,
Terwilleger and Wick showed that the predual of $H^1(\R\otimes\R)$
is the product \vmo\ space $\vmo(\R\otimes\R)$ as mentioned in
Section \ref{sec:resultsproductVMO}.

Next, as mentioned in the proof of Theorem
\ref{thm:intersectionoftranslatesBMORn}, we denote by
$H^1_{d,d}(\R\otimes\R)$ the dyadic product Hardy space with respect
to the dyadic rectangles $R\in\D\times\D$, whose norm is defined as
$$
   \|f\|_{H^1_{d,d}(\R\otimes\R)}
   := \Big\| \Big( \sum_{R\in \D\times\D} (f,h_R)^2 |R|^{-1} \chi_R \Big)^{1/2} \Big\|_1,
$$
where $\chi_R$ is the characteristic function of~$R$.  For more
information on the dyadic Hardy space, we refer to \cite{T}.
Similarly we can define the dyadic product Hardy spaces
$H^1_{d,\delta}(\R\otimes\R)$, $H^1_{\delta,d}(\R\otimes\R)$ and
$H^1_{\delta,\delta}(\R\otimes\R)$. It is  known that the dual
of $H^1_{d,d}(\R\otimes\R)$ is $\bmo_{d,d}(\R\otimes\R)$. We point
out that a direct proof can be found in \cite[Thm 4.2]{HLL} where
they deal with a more general setting of product sequence spaces.
Similarly the dual spaces of
$H^1_{d,\delta}(\R\otimes\R)$, $H^1_{\delta,d}(\R\otimes\R)$ and
$H^1_{\delta,\delta}(\R\otimes\R)$ are the dyadic product \bmo\
spaces $\bmo_{d,\delta}(\R\otimes\R)$,
$\bmo_{\delta,d}(\R\otimes\R)$ and
$\bmo_{\delta,\delta}(\R\otimes\R)$, respectively.

Now we address the duality of $\vmo_{d,d}(\R\otimes\R)$ with
$H^1_{d,d}(\R\otimes\R)$. In fact, the proof is similar to the proof of
the continuous version ($(\vmo(\R\otimes\R))^*=H^1(\R\otimes\R)$) as
shown in \cite{LTW}, where they relied on the facts that
$(H^1(\R\otimes\R))^*=\bmo(\R\otimes\R)$ and
$\textup{clos}_{H^1}\textup{FW}=H^1$. Here FW means the linear space
of finite linear combinations of product wavelets. Correspondingly,
we have the facts that
$(H^1_{d,d}(\R\otimes\R))^*=\bmo_{d,d}(\R\otimes\R)$ and that
$\textup{clos}_{H^1_{d,d}}\textup{FH}=H^1_{d,d}$, where the latter follows from
the definition of the norm of $H^1_{d,d}(\R\otimes\R)$.

Thus, the equality
$(\vmo_{d,d}(\R\otimes\R))^*=H^1_{d,d}(\R\otimes\R)$ holds. Similar
results hold for $\vmo_{d,\delta}(\R\otimes\R)$,
$\vmo_{\delta,d}(\R\otimes\R)$ and
$\vmo_{\delta,\delta}(\R\otimes\R)$.

\begin{prop}\label{prop:sum_of_translates_productH1}
    Suppose $\delta\in\R$ satisfies
    condition~\eqref{eqn:meicondR}.
    Then
    \[
        H^1(\R\otimes\R)
        = H^1_{d,d}(\R\otimes\R) +
            H^1_{d,\delta}(\R\otimes\R) +
            H^1_{\delta,d}(\R\otimes\R) +
            H^1_{\delta,\delta}(\R\otimes\R),
    \]
    with equivalent norms.
\end{prop}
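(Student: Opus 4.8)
The plan is to deduce this product $H^1$ decomposition from the corresponding product $\vmo$ statement (the second assertion of Theorem~\ref{thm:intersectionoftranslates VMO}) by duality, mirroring the one-parameter argument in which Mei's Corollary~2.4 is obtained from the $\bmo$/$\vmo$ theory. First I would record the duality dictionary already assembled in the preceding paragraphs of this section: $(\vmo(\R\otimes\R))^* = H^1(\R\otimes\R)$ by~\cite{LTW}, and $(\vmo_{d,d}(\R\otimes\R))^* = H^1_{d,d}(\R\otimes\R)$ together with its three siblings $\vmo_{d,\delta}$, $\vmo_{\delta,d}$, $\vmo_{\delta,\delta}$. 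The inclusion $\supset$ is the easy direction: each dyadic product Hardy space embeds isometrically (up to a constant) into $H^1(\R\otimes\R)$, since a $H^1_{d,d}$-atom — adapted to a dyadic rectangle — is, after normalization, a $H^1(\R\otimes\R)$-atom, using Proposition~\ref{prop:Mei} and Proposition~\ref{prop:comparable} exactly as in the proof of Corollary~\ref{cor:weighted H-L function and Hardy space}(i) to control the $L^2$-normalization when passing from a general rectangle to the containing dyadic rectangle. Summing over the four grids gives $H^1_{d,d} + H^1_{d,\delta} + H^1_{\delta,d} + H^1_{\delta,\delta} \subset H^1(\R\otimes\R)$ with a norm bound.

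For the reverse inclusion I would argue by duality. Set $X := H^1_{d,d}(\R\otimes\R) + H^1_{d,\delta}(\R\otimes\R) + H^1_{\delta,d}(\R\otimes\R) + H^1_{\delta,\delta}(\R\otimes\R)$, normed by
\[
    \|f\|_X := \inf\Big\{ \|f_1\|_{H^1_{d,d}} + \|f_2\|_{H^1_{d,\delta}}
        + \|f_3\|_{H^1_{\delta,d}} + \|f_4\|_{H^1_{\delta,\delta}}
        : f = f_1 + f_2 + f_3 + f_4 \Big\}.
\]
A standard fact about sums of Banach spaces is that the dual of a sum (with the infimum norm) is the intersection of the duals (with the max norm); applying the duality dictionary, $X^* = \bmo_{d,d} \cap \bmo_{d,\delta} \cap \bmo_{\delta,d} \cap \bmo_{\delta,\delta}$, which by Theorem~\ref{thm:intersectionoftranslatesBMORn} equals $\bmo(\R\otimes\R)$ with equivalent norms. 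On the other hand, $(H^1(\R\otimes\R))^* = \bmo(\R\otimes\R)$ by~\cite{CF}. Thus $X$ and $H^1(\R\otimes\R)$ have the same dual, with equivalent norms, and the easy inclusion $X \subset H^1(\R\otimes\R)$ is norm-continuous; to upgrade this to equality I would use that both spaces are the predual of $\bmo(\R\otimes\R)$ — here is where the product $\vmo$ theory enters, since $H^1(\R\otimes\R)$ is recovered as the dual of $\vmo(\R\otimes\R)$ and, by Theorem~\ref{thm:intersectionoftranslates VMO} plus the dyadic dualities, $X$ is the dual of $\vmo_{d,d} \cap \vmo_{d,\delta} \cap \vmo_{\delta,d} \cap \vmo_{\delta,\delta} = \vmo(\R\otimes\R)$. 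Since the two preduals coincide as sets with equivalent norms, so do their duals, giving $X = H^1(\R\otimes\R)$ with equivalent norms.

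The main obstacle I anticipate is justifying cleanly that the dual of the four-fold sum is the four-fold intersection of the duals, and dually that the predual of the sum is the intersection of the preduals; this is routine functional analysis for sums of Banach spaces, but in the present setting one must be careful that the duality pairings on the four dyadic Hardy spaces are mutually compatible (they are, being restrictions of the same pairing $\langle f, g\rangle = \sum_R (f,h_R)(g,h_R)$ on the appropriate grid, or the integral pairing) and that the relevant density statements hold — namely $\mathrm{clos}_{H^1_{d,d}}\mathrm{FH} = H^1_{d,d}$ and its analogues, which were noted above, together with $\mathrm{clos}_{H^1(\R\otimes\R)}\mathrm{FW} = H^1(\R\otimes\R)$ from~\cite{LTW}. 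A secondary point requiring a short computation is the atomic normalization in the easy inclusion: one must check that a $(d,d)$-atom on a dyadic rectangle $R = I\times J$ satisfies the size and cancellation conditions of an $H^1(\R\otimes\R)$-atom, which follows because product atoms in the Chang–Fefferman sense are more flexible than rectangle atoms, so in fact the easy inclusion needs no normalization adjustment at all and only the reverse, duality-based, inclusion carries the real content.
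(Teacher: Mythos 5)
Your proposal is correct and takes essentially the same route as the paper, which disposes of this proposition in one line: it follows from Theorem~\ref{thm:intersectionoftranslates VMO} by duality, using the dictionary $(\vmo_{d,d}(\R\otimes\R))^*=H^1_{d,d}(\R\otimes\R)$ (and its three siblings), $(\vmo(\R\otimes\R))^*=H^1(\R\otimes\R)$, and the standard fact that the dual of an intersection of compatibly embedded Banach spaces (with $C_0^\infty$ dense in each) is the sum of the duals. Your auxiliary atomic argument for the inclusion of the sum into $H^1(\R\otimes\R)$ is not needed, and in the product setting a dyadic rectangle Haar atom is not literally a Chang--Fefferman atom (that inclusion is Treil's nontrivial square-function result, not a normalization count), but since your duality argument yields both inclusions this does not affect the proof.
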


This proposition follows from Theorem
\ref{thm:intersectionoftranslates VMO} by duality.

Next we turn to the maximal functions. Instead of the
Hardy--Littlewood maximal function, in the multiparameter setting
we consider the  strong maximal function $M_s$, which is defined as
follows. For a locally integrable function $f\in\R^2$,
\begin{eqnarray}\label{eqn:strong maximal}
   M_s(f)(x,y):=\sup_{R\ni(x,y)} {1\over|R|}\int_R|f(u,v)|\,du\,dv,
\end{eqnarray}
where the supremum is taken over all rectangles $R\in \R^2$.

Next, we denote by $M_{s}^{d,d}(f)$ the dyadic strong maximal
function defined by restricting the supremum in formula
(\ref{eqn:strong maximal}) to dyadic rectangles
$\R\in\D\times\D$. Also denote  by $M_{s}^{d,\delta}(f)$ the
dyadic strong maximal function where in (\ref{eqn:strong
maximal}) we take the supremum over all $R\in\D\times\Dd$. We
define $M_{s}^{\delta,d}(f)$ and $M_{s}^{\delta,\delta}(f)$
similarly. Then we have the following result.
\begin{prop}\label{prop:sum_of_translates_strong maximal}
    Suppose $\delta\in\R$ satisfies
    condition~\eqref{eqn:meicondR}.
    Then for each $f\in L^1_\textup{loc}(\R^2)$, $M_{s}(f)$ is
    comparable with
    $M_{s}^{d,d}(f)+M_{s}^{d,\delta}(f)+M_{s}^{\delta,d}(f)+M_{s}^{\delta,\delta}(f)$
    pointwise, and the implicit constants are independent of $f$.
\end{prop}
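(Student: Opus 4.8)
The plan is to reduce the biparameter strong-maximal-function statement to the one-parameter Hardy--Littlewood result of Proposition~\ref{prop:H-L function and Hardy space}(ii) applied in each coordinate separately, using the product structure of rectangles and of the dyadic grids $\D\times\D$, $\D\times\Dd$, $\Dd\times\D$, $\Dd\times\Dd$. The easy direction is that each of the four dyadic strong maximal functions is pointwise dominated by $M_s(f)$, since each takes the supremum over a subcollection of all rectangles; hence $M_s^{d,d}(f)+M_s^{d,\delta}(f)+M_s^{\delta,d}(f)+M_s^{\delta,\delta}(f)\le 4M_s(f)$ at every point.

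For the reverse inequality, fix a point $(x,y)$ and a rectangle $R=Q_1\times Q_2$ containing $(x,y)$. By Proposition~\ref{prop:Mei} applied in the first variable, there is an interval $I_1$ with $Q_1\subset I_1$, $|I_1|\le C(\delta)|Q_1|$, and $I_1\in\D$ or $I_1\in\Dd$; applying it again in the second variable gives $I_2$ with $Q_2\subset I_2$, $|I_2|\le C(\delta)|Q_2|$, and $I_2\in\D$ or $I_2\in\Dd$. Then $\widehat{R}:=I_1\times I_2$ is a rectangle belonging to one of the four product grids, $R\subset\widehat{R}$, $(x,y)\in\widehat{R}$, and $|\widehat{R}|\le C(\delta)^2|R|$. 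Since $f$ is nonnegative after taking $|f|$,
\[
   \frac{1}{|R|}\int_R|f(u,v)|\,du\,dv
   \le C(\delta)^2\,\frac{1}{|\widehat{R}|}\int_{\widehat{R}}|f(u,v)|\,du\,dv
   \le C(\delta)^2\,\max\bigl\{M_s^{d,d}(f),M_s^{d,\delta}(f),M_s^{\delta,d}(f),M_s^{\delta,\delta}(f)\bigr\}(x,y),
\]
because $\widehat{R}$ is admissible for exactly one of the four dyadic strong maximal functions. Taking the supremum over all rectangles $R\ni(x,y)$ yields
\[
   M_s(f)(x,y)\le C(\delta)^2\bigl(M_s^{d,d}(f)+M_s^{d,\delta}(f)+M_s^{\delta,d}(f)+M_s^{\delta,\delta}(f)\bigr)(x,y),
\]
which together with the easy direction gives the asserted pointwise comparability with constants depending only on $\delta$ (hence independent of $f$).

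There is essentially no serious obstacle here: the argument is a direct two-variable application of Mei's key proposition, exactly parallel to the one-parameter proof of Proposition~\ref{prop:H-L function and Hardy space}(ii), and does not even require the doubling/comparability machinery of Proposition~\ref{prop:comparable} (we only use the containment $R\subset\widehat{R}$ and the volume bound, not averages of a weight). The one point to state carefully is that every rectangle in $\R^2$ is a product $Q_1\times Q_2$ of intervals, so that Proposition~\ref{prop:Mei} may legitimately be applied coordinatewise, and that the resulting $I_1\times I_2$ lies in precisely one of the four grids $\D\times\D$, $\D\times\Dd$, $\Dd\times\D$, $\Dd\times\Dd$. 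The same proof extends verbatim to $k$ parameters, producing comparability of $M_s$ with the sum of $2^k$ dyadic strong maximal functions and constant $C(\delta)^k$; and, just as Corollary~\ref{cor:weighted H-L function and Hardy space} follows from Proposition~\ref{prop:H-L function and Hardy space}, one obtains the weighted version for any product doubling weight by inserting Proposition~\ref{prop:comparable} (in the biparameter form of Theorem~\ref{thm:product intersectionoftranslatesApRHpdbl}(a)) to pass from $|\widehat{R}|/|R|$ to $\om(\widehat{R})/\om(R)$.
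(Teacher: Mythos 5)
Your proof is correct and follows exactly the route the paper intends: the paper's proof of this proposition is simply the remark that it is ``similar to that of Proposition~\ref{prop:H-L function and Hardy space}(ii)'', and your argument is precisely that one-parameter proof carried out coordinatewise, applying Proposition~\ref{prop:Mei} to each factor $Q_1$, $Q_2$ of a rectangle $R=Q_1\times Q_2$ to obtain a containing rectangle in one of the four product grids with area at most $C(\delta)^2|R|$. Your observation that no doubling or weight comparability is needed in the unweighted case, and your remarks on the $k$-parameter and weighted extensions, are accurate and consistent with the paper.
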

\begin{proof}
The proof of this proposition is similar to that of
Proposition \ref{prop:H-L function and Hardy space}(ii).
\end{proof}

In parallel with the one-parameter case, we define weighted
product Hardy spaces $H^1(\om)$, $H^1_{d,d}(\om_{d,d})$,
$H^1_{d,\delta}(\om_{d,\delta})$, $H^1_{\delta,d}(\om_{\delta,d})$,
and $H^1_{\delta,\delta}(\om_{\delta,\delta})$ for  doubling
weight $\om$ and dyadic doubling weights $\om_{d,d}$,
$\om_{d,\delta}$, $\om_{\delta,d}$ and $\om_{\delta,\delta}$. Also
we have the weighted strong maximal functions $M_{s,\om}$, $M_{s}^{d,d,\om_{d,d}}$,
$M_{s}^{d,\delta,\om_{d,\delta}}$, $M_{s}^{\delta,d,\om_{\delta,d}}$ and
$M_{s}^{\delta,\delta,\om_{\delta,\delta}}$.

Then, in parallel with  Corollary \ref{cor:weighted H-L function and
Hardy space}, we have the weighted version of Propositions
\ref{prop:sum_of_translates_productH1} and
\ref{prop:sum_of_translates_strong maximal}. We state it as follows, omitting the proof.
\begin{cor}\label{cor:product weighted H-L function and Hardy space}
Suppose $\delta\in(0,1)$ is far from dyadic rationals: $d(\delta)>0$.
Suppose $\om$ is a product doubling weight. Then the following
relations hold.
\begin{enumerate}
\item[\textup{(i)}] $H^1(\om) = H^1_{d,d}(\om)
    + H^1_{d,\delta}(\om)+ H^1_{\delta,d}(\om)+ H^1_{\delta,\delta}(\om)$ with equivalent norms.
\item[\textup{(ii)}] For each $f\in L^1_\textup{loc}$,
    $M_{s,\om}$ is pointwise equivalent to
    $M_{s}^{d,d,\om}(f)+M_{s}^{d,\delta,\om}(f)
    +M_{s}^{\delta,d,\om}(f)+M_{s}^{\delta,\delta,\om}(f)
    $. Here the implicit constants are independent of $f$.
\end{enumerate}
\end{cor}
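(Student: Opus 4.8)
The plan is to combine the one-parameter weighted Corollary~\ref{cor:weighted H-L function and Hardy space} with the unweighted product Propositions~\ref{prop:sum_of_translates_productH1} and~\ref{prop:sum_of_translates_strong maximal}. The one genuinely new analytic ingredient is a biparameter, weighted version of the comparability Proposition~\ref{prop:comparable}. A product doubling weight $\om$ is doubling in each variable separately with a uniform constant (Subsection~\ref{subsec:productresultsApRHpdbl}), hence, by the one-parameter Theorem~\ref{thm:intersectionoftranslatesApRHpdbl}(a), dyadic $\D$- and $\Dd$-doubling in each variable with a uniform constant; equivalently, by Theorem~\ref{thm:product intersectionoftranslatesApRHpdbl}(a), $\om$ is dyadic doubling with respect to each of $\D\times\D$, $\D\times\Dd$, $\Dd\times\D$ and $\Dd\times\Dd$. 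The claim to establish is: given a rectangle $R=Q_1\times Q_2$, if $I_1\supset Q_1$ and $I_2\supset Q_2$ are the intervals furnished by Proposition~\ref{prop:Mei} in the first and second variables (so $|I_j|\le C(\delta)|Q_j|$ and each $I_j\in\D$ or $I_j\in\Dd$), then $\om(I_1\times I_2)/|I_1\times I_2|$ is comparable to $\om(R)/|R|$, with constants depending only on $C(\delta)$ and the product doubling constant. This is proved by applying Proposition~\ref{prop:comparable} one variable at a time: for a.e.\ fixed $y$ one has $\intav_{I_1}\om(x,y)\,dx\asymp\intav_{Q_1}\om(x,y)\,dx$ with constants independent of $y$, and integrating in $y$ over $Q_2$ gives $\om(I_1\times Q_2)/|I_1\times Q_2|\asymp\om(Q_1\times Q_2)/|Q_1\times Q_2|$; repeating the estimate in the second variable, now with $Q_1$ replaced by the fixed interval $I_1$, passes from $I_1\times Q_2$ to $I_1\times I_2$.

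Granting this, part~(ii) follows just as in Corollary~\ref{cor:weighted H-L function and Hardy space}(ii). Each of $M_s^{d,d,\om}$, $M_s^{d,\delta,\om}$, $M_s^{\delta,d,\om}$, $M_s^{\delta,\delta,\om}$ is pointwise at most $M_{s,\om}$, being a supremum over a subfamily of all rectangles, so their sum is at most $4M_{s,\om}(f)$. Conversely, fix $(x,y)$ and a rectangle $R=Q_1\times Q_2\ni(x,y)$; Proposition~\ref{prop:Mei} applied in each variable gives $R\subset I_1\times I_2$ with $I_1\times I_2$ in one of the four product dyadic grids and $|I_1\times I_2|\le C(\delta)^2|R|$. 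Since $(x,y)\in R\subset I_1\times I_2$ and $|f|\om\ge0$, the biparameter comparability yields
\[
  \frac{1}{\om(R)}\int_R|f|\,\om
  \;\le\; \frac{C}{\om(I_1\times I_2)}\int_{I_1\times I_2}|f|\,\om,
\]
and the right-hand side is at most $C$ times the value at $(x,y)$ of whichever of the four dyadic weighted strong maximal functions corresponds to the grid of $I_1\times I_2$. Taking the supremum over $R\ni(x,y)$ proves $M_{s,\om}(f)\le C\big(M_s^{d,d,\om}(f)+M_s^{d,\delta,\om}(f)+M_s^{\delta,d,\om}(f)+M_s^{\delta,\delta,\om}(f)\big)$ pointwise.

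For part~(i) the natural route is duality, paralleling the deduction of Proposition~\ref{prop:sum_of_translates_productH1} from Theorem~\ref{thm:intersectionoftranslates VMO}. First define the weighted product spaces $\vmo(\om)$, $\vmo_{d,d}(\om)$, $\vmo_{d,\delta}(\om)$, $\vmo_{\delta,d}(\om)$, $\vmo_{\delta,\delta}(\om)$ by the weighted analogues of Definitions~\ref{def:product VMO Carleson} and~\ref{def:product dyadic VMO}, with the Lebesgue measure of the relevant sets replaced by their $\om$-measure. One then checks, by the arguments of~\cite{LTW} and~\cite{HLL} carried out with the weight present, the dualities $(\vmo(\om))^*=H^1(\om)$ and $(\vmo_{d,d}(\om))^*=H^1_{d,d}(\om)$, and similarly for the three translated dyadic versions. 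Next, the weighted analogue of Theorem~\ref{thm:intersectionoftranslates VMO},
\[
  \vmo(\om)=\vmo_{d,d}(\om)\cap\vmo_{d,\delta}(\om)\cap\vmo_{\delta,d}(\om)\cap\vmo_{\delta,\delta}(\om)
\]
with comparable norms, follows by repeating the proof of Theorem~\ref{thm:intersectionoftranslatesBMORn}, replacing the Carleson conditions and the auxiliary set-measure estimates by their weighted counterparts --- the latter furnished by the biparameter comparability together with the covering argument already used in that proof. Since $H^1(\om)$ and the four dyadic weighted $H^1$ spaces are the duals of these mutually compatible $\vmo$ spaces, and the predual of a finite intersection of Banach spaces (sitting inside a common ambient space) is the sum of the preduals, with the norm on the sum comparable to $\inf\sum_i\|f_i\|$, dualizing gives $H^1(\om)=H^1_{d,d}(\om)+H^1_{d,\delta}(\om)+H^1_{\delta,d}(\om)+H^1_{\delta,\delta}(\om)$ with equivalent norms; tracking the constants through the duality and through the weighted comparability makes the equivalence quantitative.

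The step I expect to be the main obstacle is the weighted product $\vmo$--$H^1$ duality and the weighted $\vmo$ intersection theorem just invoked. In the product setting one cannot argue atom by atom as in the one-parameter Corollary~\ref{cor:weighted H-L function and Hardy space}: a continuous product $H^1(\om)$-atom is supported in an open set, not in a single rectangle, and the one-third trick only enlarges that open set rectangle by rectangle, producing dyadic envelopes that need not be contained in any set of $\om$-measure at most $C\om(\Omega)$ --- which is precisely why the unweighted statement is already handled by duality rather than by atoms. So the real work is to confirm that the Chang--Fefferman-type weighted product $H^1$--$\bmo$ duality and the associated weighted Journé/Chang--Fefferman atomic machinery go through unchanged with the weight in place; once that is done, everything above is routine, and the analogous argument for arbitrarily many parameters gives the $2^k$-translate version.
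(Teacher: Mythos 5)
The paper itself omits the proof of this corollary, so there is no official argument to compare against; it only indicates that the proof should run ``in parallel with'' the one-parameter weighted Corollary~\ref{cor:weighted H-L function and Hardy space} and the unweighted product Propositions~\ref{prop:sum_of_translates_productH1} and~\ref{prop:sum_of_translates_strong maximal}. Your treatment of part~(ii) is correct and is surely the intended argument: a product doubling weight is uniformly doubling, hence uniformly dyadic $\D$- and $\Dd$-doubling, in each variable separately, so iterating Proposition~\ref{prop:comparable} one variable at a time gives $\om(I_1\times I_2)/|I_1\times I_2|\asymp\om(Q_1\times Q_2)/|Q_1\times Q_2|$ for the rectangles produced by applying Proposition~\ref{prop:Mei} coordinatewise, and the two-sided pointwise bound for $M_{s,\om}$ then follows exactly as in the one-parameter case.

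Part~(i), however, is a program rather than a proof, and you say as much yourself. The chain you propose --- weighted product $\vmo$ spaces, the dualities $(\vmo(\om))^*=H^1(\om)$ and $(\vmo_{d,d}(\om))^*=H^1_{d,d}(\om)$, a weighted version of Theorem~\ref{thm:intersectionoftranslates VMO}, and then ``the predual of an intersection is the sum of the preduals'' --- rests on several statements proved neither in the paper nor in the cited references for a general product doubling weight. The Chang--Fefferman and Lacey--Terwilleger--Wick dualities, and Treil's inclusion $H^1_{d,d}\subset H^1$ used to obtain $\bmo\subset\bmo_{d,d}$, are $L^2$-based results known for Lebesgue measure (and extendable to product $A_\infty$ weights), but a product doubling weight need not lie in $A_\infty$, and even the equivalence of the atomic, square-function and maximal-function definitions of weighted product $H^1(\om)$ is not automatic in that generality; you never fix which definition you are using, which matters since the paper leaves it unspecified. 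You correctly diagnose why the one-parameter atomic argument of Corollary~\ref{cor:weighted H-L function and Hardy space}(i) does not transfer (product atoms live on open sets, and the rectanglewise one-third trick produces an enlarged open set whose $\om$-measure is controlled only via a weighted bound for $M_s$ that doubling alone does not supply), but identifying the obstacle is not the same as overcoming it. As written, part~(i) reduces the corollary to a family of unproved weighted duality and intersection theorems, so the argument is incomplete at precisely the step you flag as the main difficulty.
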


\end{document}